\numberwithin{equation}{section}
\newcommand{\U}{\mathrm{U}}
\newcommand{\SU}{\mathrm{SU}}
\newcommand{\su}{\mathfrak{su}}
\newcommand{\mft}{\mathfrak{t}}
\newcommand{\mfg}{\mathfrak{g}}
\newcommand{\Ad}{\mathrm{Ad}}
\newcommand{\RR}{\mathbb{R}}
\newcommand{\CC}{\mathbb{C}}
\newcommand{\ZZ}{\mathbb{Z}}
\newcommand{\PP}{\mathbb{P}}
\newcommand{\Id}{\mathrm{Id}}
\DeclareMathOperator{\Aut}{Aut}
\DeclareMathOperator{\diag}{diag}
\newcommand{\GL}{\mathrm{GL}}
\newtheorem{thm}{Theorem}[section]
\newtheorem{theorem}[thm]{Theorem}
\newtheorem{lem}[thm]{Lemma}
\newtheorem{lemma}[thm]{Lemma}
\theoremstyle{definition}
\newtheorem{rem}[thm]{Remark}
\newtheorem{defn}[thm]{Definition}
\newtheorem{ex}[thm]{Example}
\newtheorem{remark}[thm]{Remark}
\newtheorem{definition}[thm]{Definition}
\newtheorem{example}[thm]{Example}
\newcommand{\C}{\mathbb C}
\newcommand{\Z}{\mathbb Z}
\newcommand{\Q}{\mathbb Q}
\newcommand{\R}{\mathbb R}
\begin{document}
	\title{On GKM fiber bundles and realizability with full flag fibers}
	\author{Oliver Goertsches\footnote{Philipps-Universit\"at Marburg, email:
			goertsch@mathematik.uni-marburg.de}, Panagiotis Konstantis\footnote{Philipps-Universit\"at Marburg,
			email: pako@mathematik.uni-marburg.de },\\[0.1cm]
		Nikolas Wardenski\footnote{University of Haifa,
			email: wardenski.math@gmail.com}, and Leopold
		Zoller\footnote{Universit\"at zu K\"oln, email: zoller@math.uni-koeln.de}}
	
	\maketitle
	\begin{abstract}
		We investigate under which conditions an equivariant fiber bundle whose base, total space and fiber are GKM manifolds induces a fibration or fiber bundle of the corresponding GKM graphs. In particular, we give several counterexamples. Concerning the converse direction, i.e., the realization problem for fiber bundles of GKM graphs, we restrict to the setting of fiberwise signed GKM fiber bundles over $n$-gons whose fiber is the GKM graph of a full flag manifold. While it was known that any such bundle is realizable for a $\CC P^1$-fiber, we observe that new phenomena occur in higher dimensions where realizability depends on the twist automorphism of the GKM fiber bundle. We classify possible twist isomorphsims and show that realizability can be decided in terms of our classification.
	\end{abstract}
	
\section{Introduction}	
	
One of the most striking connections between combinatorics and the geometry of manifolds is the
Delzant correspondence between symplectic toric manifolds and Delzant polytopes \cite{MR984900}.
Delzant's Theorem states that both worlds are equivalent. While the
spirit of this phenomenon carries
over to more general settings, the precise nature of the correspondence between geometry
and combinatorics is more obscure outside of the toric context. In this paper we are
concerned with the notion of GKM manifolds, named after and popularized by
\cite{MR1489894}. In this setting, much of the topology of the manifold is encoded in a
labeled graph, the GKM graph. The abstract notion of a GKM graph has been introduced in
\cite{MR1823050} and studied independently from geometry. Many classical geometric
phenomena turn out to have combinatoric counterparts. In particular, the authors introduce
the notion of a fibration of GKM graphs. This is further developed in \cite{MR2873096}, where the
refined notion of a fiber bundle of GKM graphs is introduced and a combinatorial
Leray-Hirsch Theorem is proved.

In light of the original motivation of understanding the relation between both worlds,
an immediate natural question is the following: which abstract GKM graphs are realized
geometrically by a GKM manifold? While this is largely unknown, the situation has been
successfully studied in dimensions $\leq 6$ \cite{2210.01856v1} where a one-to-one correspondence
between certain GKM manifolds and graphs continues to hold akin to the toric situation. The
problem can of course be asked subject to more specialized situations, and the goal of this
paper is to shed light on the correspondence between the geometric and combinatoric
notions of fiber bundles. More specifically, two immediate questions arise:
\begin{enumerate}[label = $(\roman*)$]
\item When does an equivariant fiber bundle of GKM manifolds induce the combinatoric situation of a fiber bundle of GKM graphs?
\item When is a fiber bundle of GKM graphs realized by an equivariant fiber bundle of GKM manifolds?
\end{enumerate}
These questions have been addressed in \cite{MR4634089} specifically for manifolds of dimension $6$. Somewhat simplified, the answer to both questions turns out to be always positive, hence allowing free passage between geometry and combinatorics. The main results of this paper illustrate that neither question has a simple answer in higher dimensions.

We study question $(ii)$ in the situation where the fiber graph $\Gamma$ is that of a
generalized flag $G/T$, where $G$ is a simply connected compact Lie group with maximal
torus $T\subset G$, and the base graph $B$ is $2$-regular. In this situation, the
combinatorics of the fiber bundle are governed by a single holonomy automorphism $\Phi$ of
$\Gamma$ that arises from a path around the base graph. We study the automorphism group
of $\Gamma$ and obtain a structural result in Theorem \ref{thm:type decomposition}, showing
that, in case $\Phi$ preserves the signed structure on on $\Gamma$ induced by a
$T$-invariant Kähler structure, there is a unique decomposition $\Phi = \Phi_1\circ
\Phi_2$ with $\Phi_1$ coming from left multiplication with an element of the Weyl group of
$G$ (Type 1) and $\Phi_2$ induced by an outer automorphism of $G$ (Type 2). The answer to
question $(ii)$ above then can be completely answered in terms of this decomposition (cf.
Theorem \ref{thm:realization}):

\begin{theorem}\label{thm: 1introduction}
		Let $\Gamma\to \Gamma'\to B$ be a $T$-GKM fiber bundle, where
		$\Gamma$ is the GKM graph of $G/T$ and $B$ is $2$-regular. Furthermore assume that the common kernel of the weights of $B$ is a connected subgroup of $T$ and that the twist automorphism $\Phi=\Phi_1\circ \Phi_2$ preserves the connection and the signed structure on $\Gamma$, where $\Phi_1, \Phi_2$ denotes the unique decomposition in Type 1 and Type 2 automorphisms.
		\begin{enumerate}[label = $(\roman*)$]
			\item If ${\Phi}_2=\Id$, then the bundle is realizable. More precisely, there exists a smooth $T$-equivariant fiber bundle $Z\to X$ with fibers over fixed points twisted equivariantly diffeomorphic to $G/T$, such that the $T$-action on $Z$ is of GKM type with GKM graph $\Gamma'$ and $X$ is a quasitoric $4$-fold with GKM graph $B$.
			\item If ${\Phi}_2\neq \Id$, then the map $H^*(\Gamma')\rightarrow H^*(\Gamma)$ induced by the fiber inclusion is not surjective. In particular, the GKM fiber bundle is not realizable by an equivariant fiber bundle in which base, total space, and fibers over fixed points are GKM manifolds.
		\end{enumerate}	
		
	\end{theorem}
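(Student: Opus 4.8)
The two halves need different methods: $(i)$ is an explicit construction over the polygon, $(ii)$ a cohomological obstruction argument.

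For $(i)$, the plan is to build a realization by hand. Since $B$ is a $2$-regular GKM graph whose weights $\beta_1,\dots,\beta_n$ have connected common kernel $T_0=\bigcap_i\ker\beta_i$, the Davis--Januszkiewicz construction over the $n$-gon yields a quasitoric $4$-fold $X$ with a $T$-action (factoring through $T/T_0$) whose GKM graph is $B$. One then builds $Z\to X$ by a clutching adapted to the $n$-gon: cover $X$ by $T$-invariant pieces indexed by the vertices and edges of the polygon, put a trivial $G/T$-bundle on each piece, glue successive pieces along the connection of $\Gamma'$ over the corresponding edge of $B$, and insert one extra transition — at the place where the polygon closes up — equal to left translation by a fixed lift $\dot w_1\in N(T)\subseteq G$ of the Weyl group element defining $\Phi_1$. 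The key point is that $\Phi_2=\Id$ means the twist is of Type $1$, so the entire gluing datum is Weyl group data, and the Weyl group is realized honestly inside $G$ through $N(T)$; hence the clutching closes up consistently and $Z$ is a smooth $T$-manifold fibering over $X$ with fibre $G/T$. It remains to verify that the $T$-action on $Z$ is of GKM type, that its GKM graph is $\Gamma'$ — which entails checking not merely the vertex set and the labels but that the connection and the holonomy are reproduced, the latter precisely by the inserted $\dot w_1$ — and that the fibres over fixed points are twisted equivariantly diffeomorphic to $G/T$, which the construction makes manifest. The identification of the connection and the holonomy of the graph of $Z$ with the prescribed ones is, I expect, the main bookkeeping obstacle.

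For $(ii)$, I first reduce non-realizability to the failure of surjectivity of $H^*(\Gamma')\to H^*(\Gamma)$. Suppose, for contradiction, that a realization $Z\to X$ existed with base, total space, and a fibre $F$ over a fixed point all GKM manifolds. Then $X$ is a $4$-dimensional GKM manifold, so (its GKM graph being an $n$-gon) a quasitoric $4$-fold, hence simply connected with $H^{\mathrm{odd}}(X)=0$; and $F$, being diffeomorphic to $G/T$, has $H^{\mathrm{odd}}(F)=0$ as well. In the Serre spectral sequence of $F\hookrightarrow Z\to X$ (with trivial coefficients, since $X$ is simply connected), every differential leaving the fibre column $E_r^{0,*}$ has vanishing target: that target involves $H^3(X)$, or $H^{\ge5}(X)$, or an odd-degree cohomology group of $F$, all of which vanish. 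Therefore $E_\infty^{0,*}=E_2^{0,*}$ and the fibre restriction $H^*(Z)\to H^*(F)$ is surjective. Since $Z$ and $F$ are equivariantly formal, the $H^*(BT)$-linear graded map $H^*_T(Z)\to H^*_T(F)$ is surjective modulo $H^*(BT)_+$, hence surjective by graded Nakayama. As $H^*_T(Z)\cong H^*(\Gamma')$ and $H^*_T(F)\cong H^*(\Gamma)$, and this is the fibre-inclusion map, we obtain the desired contradiction.

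It remains to show that $H^*(\Gamma')\to H^*(\Gamma)$ is not surjective once $\Phi_2\ne\Id$. As $\Gamma$ is the GKM graph of $G/T$, we have $H^*(\Gamma)\cong H^*_T(G/T)$ as rings, and this ring is generated over $H^*(BT)$ in degree $2$; since $H^*(BT)$ lies in the image of the (ring homomorphism) restriction map, it suffices to exhibit a degree-$2$ class outside the image. Now $H^2(\Gamma)\cong\ft^*\oplus\ft^*$ splits canonically — this is where the signed (K\"ahler) structure is used — into the part pulled back from $BT$ and the geometric classes $f_\chi$, $f_\chi(w)=w\chi$, which are the equivariant first Chern classes of the line bundles $G\times_T\C_\chi$; $\Phi^*$ preserves this splitting, and on the geometric summand it acts by the automorphism $\sigma$ of $\ft^*$ induced by $\Phi_2$, the Type $1$ factor $\Phi_1$ acting trivially there (being left translation). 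A class in the image of $H^2(\Gamma')$, restricted to a fibre, must be compatible around the whole cycle of $B$; pushing this compatibility through the connection isomorphisms — and controlling the slack introduced by the base labels $\beta_i$ carried by the horizontal edges of $\Gamma'$, which is exactly where the hypotheses that $B$ be $2$-regular and that $T_0$ be connected come in — forces the parameter $\chi$ of any geometric class in the image into the $\sigma$-fixed subspace $(\ft^*)^\sigma$. Since $\Phi_2\ne\Id$ entails $\sigma\ne\Id$, we have $(\ft^*)^\sigma\subsetneq\ft^*$, so the image is a proper subring. Making this monodromy-around-$B$ analysis precise in the presence of the base labels is the main obstacle of $(ii)$.
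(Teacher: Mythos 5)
Your sketch for part $(i)$ follows the same route as the paper — clutching over the polygon, with the Type~$1$ twist realized by left multiplication with a lift of $w_1$ in $N(T)\subset G$ — but it has a genuine gap. You describe gluing trivial bundles over a cover indexed by vertices and edges of the polygon, and assert that ``the clutching closes up consistently.'' However, going around the polygon produces a bundle over $X$ minus a small invariant disk $\pi^{-1}(D)$ in the interior, and the monodromy of that bundle over the inner circle $\partial D$ is precisely conjugation/left multiplication by $w_1$, which is not the identity. Whether the bundle extends over the disk (i.e., whether the construction ``closes up'') is exactly the content of the paper's argument that the restriction to $\partial D$ is a mapping torus of an $(T\times G)$-equivariant diffeomorphism $f$ that is isotopic to the identity because $Z_G(T')$ is connected (this is where the standing hypothesis that the common kernel $T'$ of the base weights be connected enters for part~$(i)$). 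Without this, the clutching over the inner circle is left unresolved, and you cannot conclude that a closed smooth $Z$ exists. You also do not address the smoothness of the transition functions, which in the paper hinges on choosing a strongly quasitoric model for $X$ and a section with the property of Lemma~\ref{lem: quasi smooth}. These are not bookkeeping issues; they are the heart of why the construction works.

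For part $(ii)$, your reduction via the Serre spectral sequence to the non-surjectivity of $H^*(\Gamma')\to H^*(\Gamma)$ is exactly the paper's first step and is correct. Your route to the non-surjectivity itself is different in structure from the paper's and is incomplete where you note the ``main obstacle.'' The paper does not try to locate the image inside a $\sigma$-fixed subspace directly; instead it proves that if $\iota^*$ were surjective then $\Phi^*=\mathrm{id}$ on $H^2(\Gamma)$, by localizing to the two-vertex subgraphs $S_e$ (Lemma~\ref{lem: H2lem}) and then carrying out a purely local computation controlling the slack from the base labels edge by edge; the contradiction then comes from Lemma~\ref{lem: cohomologystuff}, which shows that Type~$1$ acts trivially and any nontrivial Type~$2$ automorphism acts nontrivially on $H^2(\Gamma)$. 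Your approach could plausibly be made to work, but the monodromy-around-$B$ computation that you defer is exactly the hard part, and you would need a mechanism such as the localization to $S_e$ to make it tractable. Finally, a small but real inaccuracy: the splitting $\mathfrak{t}^*\oplus\mathfrak{t}^*$ you invoke is for the equivariant group $H^2_T(\Gamma)$, not $H^2(\Gamma)$; the non-equivariant $H^2(\Gamma)\cong H^2(G/T)$ is a single copy of $\mathfrak{t}^*_{\mathbb{Z}}$, and the action you want to exploit is the one induced by $\Psi$ on this quotient, which is the content of Lemma~\ref{lem: cohomologystuff}(b) in the paper.
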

	
In particular, unlike in the low-dimensional situation, the answer can be both positive and negative. Concrete examples of the realizable and the non-realizable case are provided in Examples \ref{ex:fiberbundles} and \ref{ex:Type2} respectively.

With regards to question $(i)$, the answer turns out to be that it is not easy to give conditions which are simultaneously sufficient and necessary to ensure a fiber bundle of GKM manifolds induces a fiber bundle of GKM graphs (cf. Remark \ref{rem: pessimistic rem}). Regarding sufficient conditions, we have (cf. Theorem \ref{thm: induced graph bundles})
\begin{theorem}\label{thm: 2introduction}
Let $F\rightarrow M\rightarrow B$ be a fiber bundle in which all spaces are $T$-GKM manifolds and the projection map $M\to B$ is equivariant.
\begin{enumerate}[label = $(\roman*)$]
\item Then the induced maps on one-skeleta induce a GKM fibration.
\item If additionally, for each vertex $v$ of the graph of $M$, the weights of any horizontal edge adjacent to $v$ and any two vertical edges adjacent to $v$ are linearly independent, then transport along a horizontal edge induces a graph isomorphism between the respective fiber graphs.
\item Assume that for each vertex $v$ of the graph of $M$, the weights of any horizontal edge adjacent to $v$ and any three vertical edges adjacent to $v$ are linearly independent. Furthermore assume that any adjacent set of vertical edges can be completed to a basis of the weight lattice. Then the induced GKM fibration is a GKM fiber bundle.
\end{enumerate}
\end{theorem}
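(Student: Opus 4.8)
The plan is to read the combinatorics off the $T$-equivariant geometry at the fixed points and along the invariant two-spheres, treating the three statements as successive refinements. Write $\pi\colon M\to B$ for the projection, $F_b=\pi^{-1}(b)$ for the fiber, and $\Gamma_M$, $\Gamma_B$ for the GKM graphs (I also write $B$ for the base manifold). Since $\pi$ is equivariant it maps $M^T$ to $B^T$, and for $p\in M^T$ the differential $d\pi_p\colon T_pM\to T_{\pi(p)}B$ is a surjection of $T$-representations with kernel $T_pF_{\pi(p)}$. As the weights of $T_pM$ are pairwise linearly independent, $T_pF_{\pi(p)}$ is the sum of a well-defined subset of the weight lines of $T_pM$; call an edge of $\Gamma_M$ at $p$ \emph{vertical} if its weight line lies in $T_pF_{\pi(p)}$ and \emph{horizontal} otherwise. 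Then the vertical edges at $p$ are precisely the edges at $p$ of the GKM graph of $F_{\pi(p)}$ (which is a GKM manifold, being a $T$-invariant submanifold of $M$ with isolated fixed points and hence pairwise independent weights), while the horizontal weights at $p$ are exactly the weights of $T_{\pi(p)}B\cong T_pM/T_pF_{\pi(p)}$ and therefore — since every tangent weight line of a GKM manifold carries an invariant two-sphere — correspond bijectively and weight-preservingly to the edges of $B$ at $\pi(p)$. Setting $\bar\pi(p)=\pi(p)$ on vertices, contracting vertical edges, and sending a horizontal edge $e$ to the edge $\pi(e)$ of $B$ with the same weight (here $\pi$ restricts to an equivariant isomorphism from $S_e\cong S^2$ onto the invariant two-sphere of $B$ with that weight) defines a morphism $\bar\pi\colon\Gamma_M\to\Gamma_B$ of labeled graphs.

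For part $(i)$ it then remains to check that $\bar\pi$ respects the canonical connections. Along an edge $e\colon p\to q$ of $\Gamma_M$, $\nabla_e$ is the weight-line matching obtained by limiting the $T$-invariant line subbundles of $TM|_{S_e}$ to the two poles, so $\nabla_e$ matches the fibers at $p$ and $q$ of any $T$-invariant subbundle of $TM|_{S_e}$. Applying this to $TF_{\pi(p)}|_{S_e}$ when $e$ is vertical, and — since $\pi|_{S_e}$ is an isomorphism onto $\pi(S_e)$, so that $S_e$ is a section of $\pi^{-1}(\pi(S_e))\to\pi(S_e)$ — to the vertical tangent bundle of $\pi^{-1}(\pi(S_e))$ restricted to $S_e$ when $e$ is horizontal, shows that $\nabla_e$ sends vertical edges to vertical and horizontal to horizontal. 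For horizontal $e$ the bundle surjection $d\pi\colon TM|_{S_e}\to(\pi|_{S_e})^*TB$ has kernel the vertical tangent bundle, and naturality of the limiting procedure under this surjection yields $\bar\pi\circ\nabla_e=\nabla_{\pi(e)}\circ\bar\pi$ on horizontal stars. This is exactly what is needed to conclude that $\bar\pi$ is a GKM fibration in the sense of \cite{MR2873096}.

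For part $(ii)$, fix a horizontal edge $e\colon p\to q$ over $\bar e=\pi(e)$ of weight $\alpha$ and set $T'=\ker\alpha$, so that $T/T'$ acts effectively on $\bar e\cong S^2$ and $T'$ fixes $\bar e$ pointwise. Equip the fiber bundle $\pi^{-1}(\bar e)\to\bar e$ with a $T$-invariant Ehresmann connection (average an arbitrary one over $T$) and let $\psi\colon F_{\pi(p)}\to F_{\pi(q)}$ be parallel transport along a path in $\bar e$; it is $T'$-equivariant because $T'$ fixes the path, and on fixed points it is independent of all choices since $M^T$ is discrete and the space of choices is connected. The hypothesis that every horizontal weight together with any two vertical weights at a vertex is linearly independent says exactly that at every vertex distinct vertical weights restrict to distinct characters of $T'$; hence $F_{\pi(p)}$ and $F_{\pi(q)}$ are GKM manifolds for the $T'$-action, the $T'$-equivariant diffeomorphism $\psi$ induces an isomorphism of their GKM graphs, and the only indeterminacy — edge labels determined up to multiples of $\alpha$ — is precisely the one permitted in \cite{MR2873096}. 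Since both $\psi$ and $\nabla_e$ are produced by $T'$-equivariant geometry along $S_e$, they agree on the vertical star at $p$, so $\psi$ is the asserted graph isomorphism, compatible with the connection.

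Part $(iii)$ assembles these into a GKM fiber bundle. Under the stronger hypothesis, every edge $\bar e$ of $B$ of weight $\alpha$ has GKM preimage $\pi^{-1}(\bar e)$ — its weights at a vertex over an endpoint of $\bar e$ are the vertical weights there together with $\alpha$, pairwise independent by assumption — and part $(i)$ provides, at each vertex over the source of $\bar e$, a unique horizontal edge over $\bar e$; running the construction of part $(ii)$ at all of these vertices simultaneously via the single transport map gives an isomorphism $\phi_{\bar e}$ of the fiber graphs over the two endpoints, compatible with $\nabla$ along every horizontal edge over $\bar e$ and satisfying $\phi_{\bar e^{-1}}=\phi_{\bar e}^{-1}$. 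The condition that every adjacent set of vertical edges extends to a basis of the weight lattice supplies, at each vertex, the splitting of the weight lattice into a vertical and a horizontal summand, which is the datum separating a GKM fiber bundle from a mere GKM fibration. The main obstacle is then bookkeeping of a delicate kind rather than a new idea: one must verify that $\bigl(\bar\pi,\{\phi_{\bar e}\}\bigr)$ satisfies \emph{all} axioms of a GKM fiber bundle of \cite{MR2873096} — especially the compatibility of the $\phi_{\bar e}$ with the connections on the fiber graphs and the correct treatment of the label indeterminacy — and it is exactly at this point that the strengthened linear-independence hypothesis on three vertical edges and the lattice-basis hypothesis are genuinely consumed, whereas the geometric construction of the $\phi_{\bar e}$ themselves is the easy part.
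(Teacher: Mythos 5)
Your overall strategy is the same as the paper's: read off vertical versus horizontal edges from the weight decomposition of $T_pM$, and transport fibers over fixed points to fibers over adjacent fixed points by a $U$-equivariant identification (you via an averaged Ehresmann connection, the paper via a $U$-equivariant trivialization of the bundle pulled back over a path in $\bar S$). These are equivalent mechanisms, and your treatment of part $(ii)$ is essentially correct.

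There are, however, two real gaps. First, in part $(i)$ you assert in passing that ``$\pi$ restricts to an equivariant isomorphism from $S_e\cong S^2$ onto the invariant two-sphere of $B$ with that weight.'' This is the crux of proving that the induced map on one-skeleta is a graph fibration: it is precisely the statement that the second endpoint of a horizontal two-sphere is a $T$-fixed point lying in the fiber over the \emph{other} fixed point of $\bar S$, and it is not automatic. A priori the second endpoint $q'$ of $S_e$ could satisfy $\pi(q')=\pi(p)$, in which case the edge would fail the defining condition of a horizontal edge of a graph morphism. The paper proves this via a $U$-equivariant trivialization of the bundle over a path $\gamma$ in $\bar S$ together with Lemma~\ref{lem: fiberstuff} (which shows $F^U=F^T$ once one knows a component of $M^U$ both meets and leaves $F$); you have the needed ingredient (the $T$-invariant Ehresmann connection appears in your part $(ii)$) but never deploy it where it is needed, namely to establish this claim.

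Second, and more seriously, part $(iii)$ is missing its essential content. The datum that upgrades a GKM fibration to a GKM fiber bundle is not ``a splitting of the weight lattice into vertical and horizontal summands at each vertex'' --- it is a single linear automorphism $\Psi_e\colon\ZZ^m\to\ZZ^m$, for each horizontal edge $e$, intertwining the axial functions under the graph isomorphism $\Phi_e$. Producing $\Psi_e$ from the $T'$-equivariant identification of fibers (which only gives you an isomorphism of $\ZZ^k$-labelled graphs, $k$ being the rank of $T'$) is exactly where the GKM$_3$ hypothesis and the lattice-basis hypothesis are consumed; the paper isolates this as Lemma~\ref{lem: automorphismstuff}, whose proof requires (a) that $\Phi_e$ commutes with the unique compatible connections on the fiber graphs (a GKM$_3$ consequence), (b) the construction of $\Psi_e$ at one vertex using the basis-extension hypothesis, and (c) an inductive propagation of the identity $\alpha(\Phi_e(f))=\Psi_e(\alpha(f))$ along the connection. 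Declaring this to be ``bookkeeping of a delicate kind rather than a new idea'' and stopping there leaves the argument incomplete precisely at the step that separates $(iii)$ from $(ii)$.
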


Part $(i)$ was proved prior to this paper in \cite{MR4634089} but we repeat the proof here since it
is more or less contained in the considerations needed for $(ii)$ and $(iii)$. The jump
from a GKM fibration to a GKM fiber bundle requires horizontal transport to respect the
graph structures of the fibers --as in $(ii)$-- and requires the labels of the fiber
graphs to transform uniformly through a linear automorphism --which is assured by the
additional conditions of $(iii)$. However both properties can individually fail for an
equvariant fiber bundle of GKM manifolds, even under strong additional assumptions on the
equivariant local trivializations (see Examples \ref{ex: cool counterexample} and \ref{ex:
long counterexample}). In particular, this shows that additional conditions as in $(ii)$
and $(iii)$ are needed in order for the statement to hold. On the other hand, the
combinatoric restrictions made in $(ii)$ and $(iii)$ are pretty severe and far from necessary:
they are not satisfied by most of the examples of GKM fiber bundles which we have shown to
be realizable in Theorem \ref{thm: 1introduction} part $(i)$.

The counterexamples \ref{ex: cool counterexample} and \ref{ex: long counterexample} might
suggest that the notion of a GKM fiber bundle is too restrictive for the general geometric
setup considered in this paper. However, at the same time there are very natural examples
of fiber bundles of GKM graphs arising from geometry \cite[Theorem 4.1]{MR2873096} and the conditions
are not restrictive enough to ensure geometric realizability as shown by Theorem \ref{thm:
1introduction} part $(ii)$. In conclusion, the correspondence between geometry and
combinatorics is partially successful when restricted to fiber bundles of GKM manifolds,
but eludes a complete and simple description in the currently developed terminology.

The structure of the article is as follows: In Section  \ref{sec:preliminaries} we recap
some important notions about GKM manifolds, GKM fibrations and fiber bundles, as well as
some results about GKM actions on homogeneous spaces and quasitoric manifolds. Section
\ref{sec:equivariant fiber bundles} investigates in more detail the conditions under which
an equivariant fiber bundle, whose base, total space, and fiber are GKM manifolds, induces
a GKM fibration or a GKM fiber bundle. Section \ref{sec:autos} classifies
automorphisms of GKM graphs of full flag manifolds, which are used in Section
\ref{sec:Realizability} to understand realizability of GKM fiber
bundles whose fibers are GKM graphs of full flag manifolds. Finally, in Section \ref{sec:
examples} we construct examples of GKM fiber bundles that can be realized in the sense
of Section \ref{sec:Realizability}.

\paragraph{Acknowledgements:}
 The first, second and fourth named author gratefully acknowledge funding of the Deutsche
 Forschungsgemeinschaft (DFG, German Research Foundation) -- 452427095.
 \section{Preliminaries}\label{sec:preliminaries}

\subsection{GKM actions and graphs}

For an action of a compact torus $T$ on a smooth manifold $M$, we define its
\emph{$k$-skeleton} $M_k$ to be 
\[
M_k:=\{p\in M\mid \dim T\cdot p\leq k\},
\]
which is the union of all $T$-orbits of dimension at most $k$. In GKM theory, named after
Goresky--Kottwitz--MacPherson \cite{MR1489894}, of particular importance is the $0$-skeleton,
which is the same as the $T$-fixed point set $M^T$, and the $1$-skeleton of the action. 
\begin{defn}
    Assuming $M$ to be compact, connected and orientable, with vanishing odd-dimensional rational cohomology, we say that the $T$-action is \emph{GKM}, or \emph{of GKM type}, if $M^T$ is a finite set of points and $M_1$ a finite union of $T$-invariant $2$-spheres.
\end{defn}

The definition is tailored towards the fact that for a GKM $T$-action on $M$, the orbit space $M_1/T$ has the structure of a graph (with vertices corresponding to fixed points, and one edge for each invariant $2$-sphere). This graph carries a natural labeling of the edges by weights of the isotropy representations. More precisely, for any fixed point $p\in M^T$, the $T$-isotropy representation decomposes into $2$-dimensional irreducible submodules, each one being tangent to exactly one of the invariant $2$-spheres containing $p$. We label the corresponding edge with the weight of this submodule, which is an element in $\ZZ_\mft^*/\pm 1$, where $\mft$ is the Lie algebra of $T$ and $\ZZ_\mft^*\subset \mft^*$ is the weight lattice of $T$. This labelled graph is called the \emph{GKM graph} of the $T$-action.

In this paper we will be concerned with the realization problem for abstract GKM graphs.
Let us recall this notion, going back to \cite{MR1823050}. We consider graphs $\Gamma$ with finite vertex set $V(\Gamma)$ and finite edge set $E(\Gamma)$; we allow multiple edges between vertices, but no loops. Edges of graphs do not have a fixed orientation; formally, $E(\Gamma)$ contains every edge twice, once for each possible orientation. In this way, for an edge $e\in E(\Gamma)$, it is well-defined to speak about its initial vertex $i(e)$ and terminal vertex $t(e)$. For $e\in E(\Gamma)$ we denote by $\bar{e}$ the same edge, but with opposite orientation. For a vertex $v$ we denote $E(\Gamma)_v=\{e\in E(\Gamma)\mid i(e) = v\}$.
\begin{defn}
    A \emph{connection} on an $n$-valent graph $\Gamma$ is a collection of bijections $\nabla_e:E(\Gamma)_{i(e)}\to E(\Gamma)_{t(e)}$, for $e\in E(\Gamma)$, satisfying
		\begin{enumerate}[label=$(\roman*)$]
        \item $\nabla_ee=\bar{e}$ and
        \item $\nabla_{\bar{e}} = (\nabla_e)^{-1}$ for any $e\in E(\Gamma)$.
		\end{enumerate}
\end{defn}
In the following, for an element $a\in \ZZ^m/\pm 1$ we will call a \emph{lift} of $a$ any of the two elements in $\ZZ^m$ that project to $a$. Moreover, note that the notion of linear independence is meaningful for elements in $\ZZ^m/\pm 1$.
\begin{defn}\label{defn:abstractgkmgraph}
    We fix natural numbers $n$ and $m$. An \emph{(abstract) GKM graph} $(\Gamma,\alpha)$ is a pair of an $n$-valent graph $\Gamma$ and a labeling of the edges $\alpha:E(\Gamma)\to \ZZ^m/\pm 1$, called \emph{axial function}, satisfying the following properties:
    \begin{enumerate}[label=$(\roman*)$]
        \item For any $v\in V(\Gamma)$ and $e,f\in E(\Gamma)_v$, we have that $\alpha(e)$ and $\alpha(f)$ are linearly independent.
        \item There exists a connection $\nabla$ on $\Gamma$ which is \emph{compatible with $\alpha$}: for any $v\in V(\Gamma)$ and $e,f\in E(\Gamma)_v$ and any choice of lifts $\tilde{\alpha}(e)$ of $\alpha(e)$ and $\tilde{\alpha}(f)$ of $\alpha(f)$ there is $\varepsilon\in \{\pm 1\}$ and $c\in \ZZ$ such that
        \[
        \tilde{\alpha}(\nabla_e f) = \varepsilon \tilde{\alpha}(f) + c\tilde{\alpha}(e).
        \]
        \item $\alpha(\bar{e})=\alpha(e)$ for all $e\in E(\Gamma)$.
    \end{enumerate}
If, for any $v\in V(\Gamma)$ and any $k$ adjacent edges $e_1,\ldots,e_k$,
the labels $\alpha(e_1),\ldots,\alpha(e_k)$ are linearly independent, we call $\Gamma$ an \emph{(abstract)
GKM$_k$ graph}. 
\end{defn}
The GKM graph of a GKM action is always an abstract GKM graph in the sense of Definition
\ref{defn:abstractgkmgraph}. The existence of a compatible connection was shown in
\cite{MR1823050} and \cite[Proposition 2.3]{MR4363804}.

\begin{defn}
A \emph{signed (abstract) GKM graph} is a pair $(\Gamma,\alpha)$ of an $n$-valent graph $\Gamma$ and an \emph{axial function} $\alpha:E(\Gamma)\to \ZZ^m$ such that (i) and variants of (ii) and (iii) in Definition \ref{defn:abstractgkmgraph} hold: in (ii), there shall exist for all $v$ and $e,f\in E(\Gamma)_v$ some $c\in \ZZ$ such that $\alpha(\nabla_ef) = \alpha(f) + c\alpha(e)$; in (iii), we demand $\alpha(\bar{e}) = -\alpha(e)$ for all edges.

Given a signed abstract GKM graph $(\Gamma,\alpha)$, the composition $\pi\circ \alpha$, where $\pi:\ZZ^m\to \ZZ^m/\pm 1$ is the projection, defines on $\Gamma$ an axial function in the sense of Definition \ref{defn:abstractgkmgraph}. We say that $(\Gamma,\alpha)$ is a \emph{signed structure} on the abstract GKM graph $(\Gamma,\pi\circ \alpha)$.
\end{defn}

In Section \ref{sec:GKMfiberbundles} below, we will need a rather general notion of morphism between graphs which allows for the collapsing of edges.
\begin{defn}\label{defn:morphismgraphs}
Given graphs $\Gamma$ and $\Gamma'$ as before (finite, possibly with multiple edges, but without loops), a \emph{morphism} $\Phi:\Gamma\to \Gamma'$ consists of a map $V(\Gamma)\to V(\Gamma')$ on vertices, as well as a partial map on edges, both again denoted by $\Phi$. More precisely, any edge $e\in V(\Gamma)$ with $\Phi(i(e))\neq \Phi(t(e))$ is sent to an edge $\Phi(e)$ connecting $\Phi(i(e))$ with $\Phi(t(e))$. We assume that $\Phi(\bar{e})=\overline{\Phi(e)}$; on edges $e$ with $\Phi(i(e))=\Phi(t(e))$ the map $\Phi$ is not defined.
\end{defn}
In case of a morphism $\Phi:\Gamma\to \Gamma'$ such that $\Phi:V(\Gamma)\to V(\Gamma')$ and $\Phi:E(\Gamma)\to E(\Gamma')$ are bijective, this definition restricts to the usual definition of an isomorphism of graphs.

\begin{defn}\label{defn:gkmgraphiso}
		Consider two abstract GKM graphs $(\Gamma,\alpha)$ and $(\Gamma',\alpha')$, where both
		axial functions $\alpha$ and $\alpha'$ take values in $\ZZ^m/\pm 1$. Then an
		isomorphism $\Phi:\Gamma\to \Gamma'$ is an \emph{isomorphism of GKM graphs}
		together with a linear isomorphism $\Psi:\ZZ^m\to \ZZ^m$ such that 
    \[
        \alpha'(\Phi(e)) = \Psi(\alpha(e))
    \]
		for all $e\in E(\Gamma)$, where $\Psi$ denotes also the
		induced map $\mathbb{Z}^{m}/\pm 1  \to \mathbb{Z}^{m}/\pm 1$. If $\Gamma=\Gamma'$, then we call $\Phi$ an
		\emph{automorphism of the GKM graph} $\Gamma$.
\end{defn}

One has natural isomorphisms $\mft^*_\ZZ \cong H^1(T;\ZZ)\cong H^2(BT;\ZZ)$ where $BT$ denotes the classifying space of $T$. Hence the labels embed into the ring $H^*(BT;\ZZ)$ which is a polynomial ring generated in degree $2$. For a GKM graph $(\Gamma,\alpha)$ one defines the (equivariant) graph cohomology
\[H^*_T(\Gamma) = \left\{f\in \prod_{v\in V(\Gamma)} H^*(BT;\ZZ) ~|~ f_{i(e)}\equiv f_{t(e)}\mod \alpha(e) \text{ for all } e\in E(\Gamma)\right\} \]
and the non-equivariant graph cohomology is given by
\[H^*(\Gamma) = H^*_T(\Gamma)/ (H^{>0}(BT;\ZZ)\cdot H^*_T(\Gamma)).\]
Then by \cite{MR1489894}, if $\Gamma$ is the GKM graph of a GKM manifold $M$, one indeed has $H^*_T(M;\ZZ)\cong H_T(\Gamma)$ and $H^*(M;\ZZ)\cong H(\Gamma)$. Although other coefficient rings are possible we will always use integral coefficients in this paper and suppress coefficients from the notation.

An isomorphism of GKM graphs induces a map on the graph cohomology.
If $\Phi\colon \Gamma\rightarrow \Gamma'$ is an isomorphism with compatible linear transformation $\Psi\colon \ZZ^m\rightarrow\ZZ^m$, and $f\in H_T(\Gamma')$ then one defines the pullback $\Phi^*\colon H_T^*(\Gamma')\rightarrow H_T^*(\Gamma)$ via $\Phi^*(f)_v =\Psi^{-1}(f_{\Phi(v)})$. One quickly checks that the divisibility conditions in the definition of $H_T^*(\Gamma)$ are satisfied for $\Phi^*(f)$. By taking quotients one obtains an induced map $H^*(\Gamma')\rightarrow H^*(\Gamma)$.

\subsection{GKM fiber bundles}\label{sec:GKMfiberbundles}

Let us review the notions of GKM fibration and GKM fiber bundle, introduced in
\cite{MR2873096}, slightly modified as in \cite{MR4634089} to take
account of multiple edges between vertices, see Remark \ref{rem: scheissegal}.

Given a morphism $\pi:\Gamma'\to B$ of graphs as in Definition \ref{defn:morphismgraphs}, we call an $e\in E(\Gamma')$ \emph{vertical} if $\pi(i(e))=\pi(t(e))$, otherwise \emph{horizontal}. For $v\in V(\Gamma')$, we denote by $H_v\subset E(\Gamma')_v$ the set of horizontal edges emanating from $v$.

\begin{defn}
    A morphism $\pi:\Gamma'\to B$ is called a \emph{graph fibration} if for all $v\in V(\Gamma')$, $\pi$ defines a bijection $H_v\to E(B)_{\pi(v)}$.
\end{defn}
Given a graph fibration $\pi:\Gamma'\to B$ and a vertex $v\in V(\Gamma')$, an edge $e\in E(B)_{\pi(v)}$ admits a unique horizontal lift $\tilde{e}$ to an edge at $v$.

\begin{defn}\label{defn:GKMfibration}
    Let $(\Gamma',\alpha)$ and $(B,\alpha_B)$ be abstract GKM graphs,
		such that both axial functions map to $\mathbb{Z}^{m}/\pm 1$. Then a graph fibration $\pi:\Gamma'\to B$ is called a \emph{GKM fibration} if there exist connections $\nabla$ on $\Gamma'$ and $\nabla^B$ on $B$, compatible with $\alpha$ respectively $\alpha_B$, such that
    \begin{enumerate}[label=$(\roman*)$]
        \item For every horizontal edge $e$ in $\Gamma'$ we have $\alpha^B(\pi(e)) = \alpha(e)$.
        \item For every edge $e\in E(\Gamma')$, the connection $\nabla_e:E(\Gamma')_{i(e)}\to E(\Gamma')_{t(e)}$ respects the decomposition into horizontal and vertical edges.
        \item For any $v\in V(\Gamma')$ and horizontal edges $e,e'\in E(\Gamma')_v$ we have $\pi(\nabla_e e') = \nabla^B_{\pi(e)} \pi(e')$.
    \end{enumerate}
\end{defn}

For a GKM fibration $\pi:(\Gamma',\alpha)\to (B,\alpha_B)$ and a vertex $p\in V(B)$, we define $\Gamma_p$ to be the subgraph of $\Gamma'$ with vertex set $V(\Gamma_p):=\pi^{-1}(p)$ and edge set $E(\Gamma_p)$ all the (vertical) edges in $\Gamma'$ connecting two of these vertices. If $\Gamma'$ is $n$-valent, and $B$ $m$-valent, then $\Gamma_p$ is an $(n-m)$-valent GKM graph.

 \begin{rem}\label{rem:horiztransportunique}
In general, given the connection $\nabla^B$, there could be several connections $\nabla$ on $\Gamma'$ satisfying the conditions of Definition \ref{defn:GKMfibration}: while the transport of horizontal edges is uniquely determined, the transport of vertical edges is not. However:  in case the weights of any horizontal edge and any two adjacent vertical edges are linearly independent, then for any horizontal edge $e\in E(\Gamma')$, the connection $\nabla_e$ is also uniquely determined on vertical edges; in case the weights of any three adjacent vertical edges are linearly independent, then for any vertical edge $e\in E(\Gamma')$, the connection $\nabla_e$ is uniquely determined on vertical edges.
\end{rem}

Any edge $e\in E(B)$, with $p:=i(e)$ and $q:=t(e)$, defines a bijection
$\Phi_e:V(\Gamma_p)\to V(\Gamma_q)$, by sending a vertex $v\in V(\Gamma_p)$ to the
endpoint of the horizontal lift $\tilde{e}$ of $e$ with $i(\tilde{e})=v$. In general, this
map is not part of a morphism of graphs $\Phi_e:\Gamma_p\to \Gamma_q$, see 
\cite[Example2.10]{MR2873096}. Even if it is, this graph morphism is not necessarily unique, as we allow
multiple edges between vertices. In the following definition, however,
there is such a graph morphism, which is even compatible with some choice of connections.

\begin{defn}\label{defn:GKMfiberbundle}
		Consider a GKM fibration $\pi:(\Gamma',\alpha)\to (B,\alpha_B)$, together with
		compatible connections $\nabla$ on $\Gamma'$ and $\nabla^B$ on $B$ as in Definition
		\ref{defn:GKMfibration}. We assume additionally that for every $v\in V(\Gamma')$,
		every horizontal edge $\tilde{e}\in H_v$, with $e=\pi(\tilde{e})$, and every vertical
		edge $e'\in E(\Gamma')_v$, the edge $\nabla_{\tilde{e}}e'$ connects $\Phi_e(v)$ with
		$\Phi_e(t(e'))$. In this case we define $\Phi_e(e'):=\nabla_{\tilde e}e'$
		and obtain
		isomorphisms of graphs $\Phi_e:\Gamma_p\to \Gamma_q$, for $p:=i(e)$ and $q:=t(e)$. In
		case all these maps are isomorphisms of GKM graphs, i.e., there exist compatible
		linear isomorphisms $\ZZ^m\to \ZZ^m$, we call $\pi$ a \emph{GKM fiber
		bundle}.
\end{defn}

 \begin{rem}\label{rem: scheissegal}  We have made slight adjustments to the notion of GKM
	 fiber bundle when compared to its introduction in \cite{MR2873096}. Most importantly we are considering integer coefficients while the original reference works over $\R$. The reason for this choice is that GKM graphs of manifolds always come with integral coefficients. Hence with regards to the question of realizing GKM graphs through manifolds, as is studied in this paper, this restriction is essential.
 
 Secondly, in \cite{MR2873096} GKM graphs carry signed structures, which is not necessarily the case for us (on the geometric side this corresponds to the fact that our manifolds are not necessarily almost complex). The adaptation of the definition of GKM fiber bundle is straight forward in this regard.
 
 Finally, in \cite{MR2873096} the axial function of the fiber graphs $\Gamma_p$ is
 considered as taking values in the rational span of the fiber weights, while we take
 labels in all of $\ZZ^m/\pm 1$ when talking about the isomorphisms between the fiber
 graphs (this reflects the fact, that fibers over $T$-fixed points are naturally
 $T$-manifolds). After intersecting said span with $\ZZ^m$, an isomorphism of this
 sublattice can always be extended to an automorphism of $\ZZ^m$. 	Conversely, any
 isomorphism $\ZZ^m\to \ZZ^m$ as in \Cref{defn:GKMfiberbundle} necessarily maps the span
 of the fiber weights to itself. Therefore, \Cref{defn:GKMfiberbundle} is indeed
 equivalent to \cite{MR2873096} in this regard.
\end{rem}

As all fiber graphs $\Gamma_p$ of a GKM fiber bundle $\Gamma'\to B$ are isomorphic, we call this GKM graph $\Gamma$, defined up to isomorphism, the fiber of $\Gamma'\to B$, and speak about a GKM fiber bundle $\Gamma\to \Gamma'\to B$.

Part $(i)$ of  the following definition was given in \cite[Definition 3.2]{MR4634089}, as an
intermediate stage between GKM fibrations between unsigned and signed GKM graphs. It will
be of relevance for us below in Section \ref{sec:Realizability} in the context of GKM
fiber bundles whose fiber is a  generalized flag manifold. Part $(ii
)$ is an adaptation of $(i)$ to the concept of GKM fiber bundles.

\begin{defn}\label{defn:fiberwise signed}
  \begin{enumerate}[label=$(\roman*)$]
\item
Let $\pi\colon(\Gamma,\alpha)\rightarrow (B,\alpha_B)$ be a GKM fibration, respectively a GKM fiber bundle. Let $F\subset E(\Gamma)$ be the set of vertical edges and $\tilde{\alpha}\colon F\rightarrow \mathbb{Z}^m$ a lift of $\alpha\colon E(\Gamma)\rightarrow \mathbb{Z}^m/\pm 1$ satisfying $\tilde{\alpha}(e)=-\tilde{\alpha}(\overline{e})$ for all $e\in F$. Then we call $\pi$, together with $\tilde{\alpha}$, a \emph{fiberwise signed GKM fibration respectively fiber bundle} if the connections $\nabla$ and $\nabla^B$ as in Definition \ref{defn:GKMfibration} respectively \ref{defn:GKMfiberbundle} can be chosen in a way such that $\tilde{\alpha}(\nabla_{e}e')\equiv \tilde{\alpha}(e')\mod \alpha(e)$ for any $e'\in F$ and $e\in E(\Gamma)$.
\item If $\pi$ is additionally a GKM fiber bundle we call it fiberwise signed if the
	automorphisms $\Phi_e$ for every horizontal edge $e$ (see Definition
	\ref{defn:GKMfiberbundle}) can be chosen to respect the signed structure, i.e.\ the
	associated linear transformation $\Psi_e\colon \Z^m\rightarrow \Z^m$
	satisfies $\tilde{\alpha}\circ \Phi_e = \Psi_e\circ \tilde{\alpha}$.
\end{enumerate}
\end{defn}

\subsection{GKM actions on homogeneous spaces}\label{sec:homspaces}

In this section we summarize the main result of \cite{MR2218848}, which describes the GKM graph of equal rank homogeneous spaces in terms of the root systems of the occurring Lie groups.

Consider a homogeneous space $G/K$, where $K\subset G$ is a compact connected subgroup of
a compact connected semisimple Lie group. We assume that $G$ and $K$ have the same rank,
so that a maximal torus $T\subset K$ is at the same time a maximal torus of $G$. On $G/K$
we consider the $T$-action by left multiplication. We denote by $\Delta_K\subset
\Delta_G\subset \ZZ_\mft^*$ the root systems of $K$ and $G$ with respect to $T$,
respectively, and put $\Delta_{G,K}:=\Delta_G\setminus\Delta_K$.  For $\alpha\in \Delta_G$
we denote the corresponding reflection by $\sigma_\alpha$, thus
\[
	\sigma_{\alpha}(\beta) = \beta - \frac{2 \langle \alpha, \beta  \rangle}{|\alpha|^{2}}
	\alpha
\]
with respect to the Killing form $\langle \cdot, \cdot \rangle$. We denote by $W(G)$ and $W(K)$ the Weyl
groups of $G$ and $K$; these may be defined as the quotients $N_G(T)/T$ and $N_K(T)/T$ of
the normalizers of $T$. We also consider them as finite groups acting on $\mft$, or
$\mft^*$, via the isomorphism $\mft\cong \mft^*$ given by the Killing form. Explicitly,
$w=gT\in W(G)$ acts on $\mft^*$ by $w\cdot \alpha:=\alpha\circ \Ad_g^{-1}$. In this way,
the reflections $\sigma_\alpha$ become elements of $W(G)$. Then from 
\cite[Theorem 2.4]{MR2218848} and \cite[Section 2.2.7]{MR2218848} we have
\begin{theorem}\label{thm:GHZ}
The $T$-action on $G/K$ is of GKM type. Its GKM graph $\Gamma$ is as follows:
\begin{enumerate}[label=(\alph*)]
    \item $V(\Gamma) = W(G)/W(K)$. We denote elements in $V(\Gamma)$ by $[w]:=wW(K)$, with $w\in W(G)$.
    \item For any $[w]\in V(\Gamma)$, we have $E(\Gamma)_{[w]} = \Delta_{G,K}/\pm 1$. Explicitly, for any $\alpha\in \Delta_{G,K}/\pm 1$, there is an edge connecting $[w]$ and $[w\sigma_\alpha]$ with label $w\cdot \alpha$.
		\item There is a \emph{canonical connection} defined as follows:
	Let $[w] \in V(\Gamma)$ and let $e \in E(\Gamma)_{[w]}$, such that $e$ joins $[w]$ and
	$[w \sigma_{\alpha}]$ with label $w \cdot \alpha$. If $f \in E(\Gamma)_{[w]}$ is the
	edge joining $[w]$ and $[w \sigma_{\beta}]$ with label $w \cdot \beta$ then we set 
	$\nabla_{e} f $ as the edge joining $[w \sigma_{\alpha}]$ and $[w \sigma_{\alpha}
	\sigma_{\beta}]$ with label $w\sigma_\alpha\cdot \beta$. 
\end{enumerate}
\end{theorem}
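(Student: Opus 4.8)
The plan is to recall the geometric input behind \cite[Theorem 2.4 and Section 2.2.7]{MR2218848} and indicate the identifications; I will not grind through the representative-independence calculations.

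\textbf{Fixed points.} First I would determine $(G/K)^T$. A coset $gK$ is $T$-fixed exactly when $g^{-1}Tg\subset K$; then $g^{-1}Tg$ is a maximal torus of $K$, hence conjugate inside $K$ to $T$, which forces $g\in N_G(T)K$. Therefore
\[
(G/K)^T \;=\; N_G(T)K/K \;\cong\; N_G(T)/(N_G(T)\cap K) \;=\; N_G(T)/N_K(T) \;=\; W(G)/W(K),
\]
a finite set, giving part (a).

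\textbf{Isotropy weights and GKM type.} For part (b) I would compute the isotropy representation at $eK$, which is $\mathfrak{g}/\mathfrak{k}$. Complexifying and using $\mathfrak{g}^{\CC}=\mathfrak{t}^{\CC}\oplus\bigoplus_{\alpha\in\Delta_G}\mathfrak{g}_\alpha$ and $\mathfrak{k}^{\CC}=\mathfrak{t}^{\CC}\oplus\bigoplus_{\alpha\in\Delta_K}\mathfrak{g}_\alpha$ identifies $(\mathfrak{g}/\mathfrak{k})^{\CC}$ with $\bigoplus_{\alpha\in\Delta_{G,K}}\mathfrak{g}_\alpha$, so $\mathfrak{g}/\mathfrak{k}$ is a sum of two-dimensional irreducible $T$-modules with weights $\alpha\in\Delta_{G,K}/\pm 1$. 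Since $G$ is semisimple, $\Delta_G$ is reduced, so distinct elements of $\Delta_{G,K}/\pm 1$ are linearly independent; hence the $1$-skeleton through $eK$ is a finite union of invariant $2$-spheres and the valency is $|\Delta_{G,K}|/2$. Acting by a representative $n\in N_G(T)$ of $w\in W(G)$ carries $eK$ to $[w]$ and conjugates the isotropy representation by $\Ad_n$, so the weights at $[w]$ become $w\cdot\alpha$. Combined with the classical fact that an equal-rank homogeneous space of a compact connected group is compact, connected, orientable with vanishing odd cohomology, this shows the $T$-action is of GKM type.

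\textbf{Edges and the canonical connection.} To pin down the endpoints I would use the rank-one subgroup $G_\alpha\subset G$ attached to $\alpha\in\Delta_{G,K}$, i.e.\ the connected subgroup whose complexified Lie algebra is $\mathfrak{t}^{\CC}\oplus\mathfrak{g}_\alpha\oplus\mathfrak{g}_{-\alpha}$, locally isomorphic to $\SU(2)$; since $\alpha\notin\Delta_K$ the orbit $G_\alpha\cdot eK$ is genuinely two-dimensional, hence the invariant $2$-sphere tangent to the weight-$\alpha$ submodule, and its two $T$-fixed points are $eK$ and $n_\alpha\cdot eK$ for $n_\alpha\in N_{G_\alpha}(T)$ representing $\sigma_\alpha\in W(G)$; moreover $\sigma_\alpha\notin W(K)$ (a reflection of $W(G)$ lying in $W(K)$ would be $\sigma_\beta$ for some $\beta\in\Delta_K$, forcing $\alpha=\pm\beta$), so $[\sigma_\alpha]\neq[e]$ and no loops arise. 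Translating by $n$ shows that the edge at $[w]$ labelled $w\cdot\alpha$ joins $[w]$ and $[w\sigma_\alpha]$, completing (b). For (c), transport along $G_\alpha\cdot[w]$ should match the sphere through $[w]$ tangent to $w\cdot\beta$ with the sphere through $[w\sigma_\alpha]$ obtained by applying $\sigma_\alpha$, namely the one tangent to $w\sigma_\alpha\cdot\beta$ joining $[w\sigma_\alpha]$ and $[w\sigma_\alpha\sigma_\beta]$. I would then check that the resulting $\nabla_e$ form a connection compatible with the axial function: the identities $\nabla_e e=\bar e$ and $\nabla_{\bar e}=(\nabla_e)^{-1}$ follow from $\sigma_\alpha^2=\Id$, and compatibility follows from the reflection identity
\[
w\sigma_\alpha\cdot\beta \;=\; w\cdot\beta \;-\; \frac{2\langle\alpha,\beta\rangle}{|\alpha|^2}\,w\cdot\alpha ,
\]
whose right-hand side has exactly the form $\varepsilon\,\tilde\alpha(f)+c\,\tilde\alpha(e)$ with $c=\tfrac{2\langle\alpha,\beta\rangle}{|\alpha|^2}\in\ZZ$ (integrality because $\alpha,\beta$ lie in a root system).

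\textbf{Main obstacle.} The step requiring the most care is that everything above descends to $W(G)/W(K)$: that the invariant $2$-spheres, their endpoints, the labels $w\cdot\alpha$, and the transported edges depend only on $[w]$ and not on the chosen representative. This rests on the facts that $W(K)\subset W(G)$ preserves $\Delta_{G,K}$ and that $u\,\sigma_{u^{-1}\cdot\alpha}\,u^{-1}=\sigma_\alpha$ for $u\in W(K)$, so that replacing $w$ by $wu$ merely re-indexes the edges at $[w]$ via $u^{-1}$ consistently; the detailed bookkeeping is precisely what is carried out in \cite[Section 2.2.7]{MR2218848}.
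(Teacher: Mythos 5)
The paper does not prove Theorem~\ref{thm:GHZ} itself: it is explicitly presented as a summary of \cite[Theorem 2.4 and Section 2.2.7]{MR2218848}, and the only proof-like content supplied by the paper is Remark~\ref{rem: canonical connection is compatible}, which checks that the canonical connection is compatible with the axial function via exactly the reflection identity you use at the end of your third paragraph. So there is no paper proof to compare against; you are reconstructing the argument of the cited reference, and your reconstruction is in substance correct: the identification of $(G/K)^T$ with $W(G)/W(K)$, the decomposition of the isotropy representation $\mathfrak{g}/\mathfrak{k}$ into weight spaces indexed by $\Delta_{G,K}/\pm1$, the description of the invariant $2$-spheres as rank-one orbits with $T$-fixed points $[w]$ and $[w\sigma_\alpha]$, and the compatibility check for the canonical connection are all the standard ingredients.

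Two small points worth tightening. First, the subgroup you call $G_\alpha$, with complexified Lie algebra $\mathfrak{t}^{\CC}\oplus\mathfrak{g}_\alpha\oplus\mathfrak{g}_{-\alpha}$, is not rank one and is not locally $\SU(2)$; it is the centralizer of $\ker\alpha\subset T$ and is locally $\SU(2)\times T^{\mathrm{rk}\,G-1}$. The argument still works because the extra torus factor fixes $eK$, but you should either correct the description or replace $G_\alpha$ by the genuine rank-one subgroup with Lie algebra spanned by $ih_\alpha$ and $\mathfrak{g}_{\pm\alpha}$, whose orbit through $eK$ is the same $2$-sphere. Second, your argument that $\sigma_\alpha\notin W(K)$ for $\alpha\in\Delta_{G,K}$ implicitly uses the standard fact that the reflections of the Weyl group $W(K)$, viewed as linear maps on $\mathfrak{t}^*$, are precisely $\{\sigma_\beta:\beta\in\Delta_K\}$; this is true, but it deserves to be stated, since it is the crux of why the edges have distinct endpoints. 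Finally, you correctly flag the representative-independence of the construction as the bookkeeping-heavy step and defer it to the reference, which is a reasonable thing to do in a sketch.
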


\begin{remark}\label{rem: canonical connection is compatible}
We point out that the canonical connection is indeed compatible with the axial 
function $\alpha$ defined by Theorem \ref{thm:GHZ} (b). From the definitions we have
\begin{align*}
\alpha(\nabla_{e}f) &=	(w \sigma_{\alpha}) \cdot \beta \\
&	= \Ad_{w}^{\ast}\left(\beta - \frac{2\langle \alpha,\beta\rangle}{|\alpha|^2} \cdot \alpha\right) 
= w \cdot \beta - \frac{2\langle \alpha,\beta\rangle}{|\alpha|^2} (w \cdot \alpha) =
	\alpha(f)- \frac{2\langle \alpha,\beta\rangle}{|\alpha|^2}  \cdot\alpha(e).
\end{align*}
\end{remark}

Below, we will be only interested in the case that $K=T$, i.e, the $T$-action on the full flag manifold $G/T$. Observe that in this case, as $V(\Gamma)=W(G)$, the GKM graph has no multiple edges.

Below, we will be interested in the case that $K=T$ is a maximal torus of
$G$. In this case, from \cite[Section IV.5]{MR1491979} $G/T$ admits a $G$-invariant Kähler structure, determined uniquely by a choice of positive roots $\Delta_+\subset \Delta_G$. Thus, the GKM graph $\Gamma$ of the $T$-action on $G/T$ obtains a signed structure, which was described explicitly in \cite[Section 3]{MR2218848}: the axial function $\alpha$, on the oriented edge $e$ from $[w]$ to $[w\sigma_\alpha]$, with $\alpha\in \Delta_+$, is given by $\alpha(e)=w\cdot \alpha$.

\subsection{Quasitoric manifolds}\label{sec:quasitoric}
In this section we recall the notion of a (strongly) quasitoric manifold and prove a lemma that will be needed in the proof of our realization result when considering smooth structures on the construction. We call two $T$-spaces \emph{twisted} equivariantly homeomorphic/diffeomorphic if they are equivariantly homeomorphic/diffeomorphic after pulling back one of the actions along an automorphism of $T$.

\begin{defn}A $T^k$-action on a $2k$-dimensional manifold $M$ is called \emph{locally standard} if every point $M$ admits an open neighborhood which is twisted equivariantly diffeomorphic to an open subset of $\CC^k$, equipped with the standard $T^k$-action by componentwise multiplication.
\end{defn}

The orbit space of a locally standard action naturally carries the structure of a manifold with corners. We thus obtain two rather similar definitions: 

\begin{defn}
A locally standard $T^k$-action on a $2k$-dimensional compact manifold is called \emph{quasitoric} if its orbit space is homeomorphic to a simple polytope $P$, in such a way that $l$-dimensional orbits are mapped to the interior of a $l$-dimensional face of $P$. It is called \emph{strongly quasitoric} if this homeomorphism may be chosen to be a diffeomorphism of manifolds with corners, where we equip $P$ with its standard differentiable structure.
\end{defn}

In case the $T$-action is noneffective, with kernel $T'$, we will call it (strongly)
quasitoric if the action of $T/T'$ is (strongly) quasitoric.

Quasitoric manifolds were defined in \cite{MR1104531}, while the notion of strongly
quasitoric manifold was introduced in  \cite{MR3030690}; in the same paper it was shown
that in every equivariant homeomorphism class of quasitoric manifold of a quasitoric
manifold there is, up to equivariant diffeomorphism, a unique strongly quasitoric
manifold.

Given a quasitoric manifold $M$ with orbit space projection $\pi:M\to M/T=P$, the action induces a so-called \emph{characteristic function} $\lambda$: for a facet $F$ of $P$, we denote by $\lambda(F)$ the common isotropy group of the points in $\pi^{-1}(F)$, which is a subcircle of $T$. Abstractly, one defines a characteristic function as a map $\lambda$ from facets of $P$ to the set of subcircles of $T$, satisfying that whenever $F_1,\ldots,F_k$ are facets of $P$ with $F_1\cap \ldots \cap F_k\neq \emptyset$, then the map $\lambda(F_1)\times \ldots \times \lambda(F_k)\to T$ is injective.

To any characteristic function $\lambda$ on $P$ one associates the \emph{canonical model} $P\times T/_\sim$, where
\[
(x,t)\sim (y,s)\Longleftrightarrow x=y, \textrm{ and } t^{-1}s \in \lambda(F_1)\times \ldots \times \lambda(F_k) \textrm{ whenever } x \in F_1\cap \ldots \cap F_k.
\] 
Any quasitoric manifold $M$ is equivariantly homeomorphic to its canonical model
\cite[Lemma 1.4]{MR1104531}.
Conversely, one may use the canonical model to find, for any given simple polytope $P$
with characteristic function $\lambda$, a quasitoric manifold, as one can construct a
smooth structure on the canonical model, see \cite[Definition 7.3.14]{MR3363157}. 

Note that the canonical model $P\times T/_\sim$ admits a canonical section $s:P\to P\times T/_\sim$ of the orbit space projection $\pi$, via $s(p)=[(p,e)]$. In this sense, describing an equivariant homeomorphism between any given quasitoric manifold $M$ with orbit space $P$ and its canonical model corresponds to choosing a section $s:P\to M$.

The following lemma is a consequence of \cite{MR4450678}.
\begin{lem}\label{lem: quasi smooth}
		Let $T=T^k$ be any compact torus and $M$ be a (not necessarily effective) strongly
		quasitoric $T$-manifold. Then there is a choice of the section $s \colon P \to M$ as
		above such that the resulting description $M=P\times T/_\sim$ has the following
		property: Let $U\subset P$ be open and $H\subset T$ be a subgroup containing all
		isotropies occurring over $U$. Then the map $\pi' \colon \pi^{-1}(U)\rightarrow
		T/H$ induced by the projection $P \times T \to T$ is smooth.
		\end{lem}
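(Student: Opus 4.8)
The plan is to reduce the statement to the cited result of \cite{MR4450678} by choosing the section $s$ carefully, and then to verify smoothness of $\pi'$ by a local computation in the canonical model. First I would pass to the effective quotient: if $T'\subset T$ is the (finite or positive-dimensional) kernel of the action, then $M$ is a strongly quasitoric $T/T'$-manifold in the usual sense, and the claim for $T$ follows from the claim for $T/T'$ after observing that the projection $T\to T/H$ factors through $T/T'\to T/(H/T')$ (here $H\supseteq T'$ since $T'$ is contained in every isotropy). So from now on I may assume the action is effective.

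Next I would invoke \cite{MR4450678} to fix a smooth atlas on $M$ adapted to the combinatorics of $P$: concretely, for each vertex $v$ of $P$ there is a standard chart identifying a neighborhood of $\pi^{-1}(v)$ equivariantly diffeomorphically with an open subset of $\CC^k$ on which $T$ acts by characters dual to the facets meeting at $v$, and these charts are compatible with a global section $s$. The point of invoking that reference is precisely that one can choose $s$ (equivalently, the trivializations over the open stars of vertices) consistently; I would take this $s$ as the section in the statement. With this choice, over the open star $U_v$ of a vertex $v$ the description $M=P\times T/_\sim$ is literally the standard local model $\CC^k\cong (\text{open in }P)\times T/_\sim$, where the $T$-coordinate on the complement of the coordinate hyperplanes is given by the phases of the $\CC^k$-coordinates in the character basis associated to $v$.

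The heart of the argument is then local smoothness of $\pi'$ over each $U_v$. On $\pi^{-1}(U_v)\cong \{z\in \CC^k : |z_j|^2 \in I_j\}$, a point with all $z_j\neq 0$ maps under $P\times T\to T\to T/H$ to the class of $(z_1/|z_1|,\dots,z_k/|z_k|)$ in the character coordinates; I must show this extends smoothly across the loci $z_j=0$. The key observation is that $z_j=0$ forces the point to lie over a facet $F_j$, whose isotropy circle $\lambda(F_j)$ is contained in $H$ by hypothesis; hence the $j$-th $T/H$-coordinate direction is already trivialized, so the apparently singular factor $z_j/|z_j|$ is killed in $T/H$ and contributes nothing. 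More precisely, writing $T/H$ in suitable coordinates, $\pi'$ is expressed through monomials $\prod_j (z_j/|z_j|)^{a_j}$ with $a_j=0$ whenever $z_j$ is allowed to vanish on $U_v$; such a function is manifestly smooth on all of the chart. I would then check that these local formulas agree on overlaps $U_v\cap U_{v'}$ --- which holds because the two character bases differ by the transition of the atlas from \cite{MR4450678}, a smooth (indeed, locally constant on the torus directions) change --- so the locally defined smooth maps glue to a global smooth $\pi'$ on $\pi^{-1}(U)$ for arbitrary open $U$ with $H$ dominating the isotropies over $U$.

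\textbf{Main obstacle.} The only genuinely delicate point is extracting from \cite{MR4450678} the \emph{compatible} choice of section $s$ simultaneously valid for all open stars; granting that, the smoothness verification is the elementary monomial computation above, whose sole content is the bookkeeping that a coordinate $z_j$ can vanish only over a facet whose circle lies in $H$, so the offending phase never appears in the $T/H$-coordinates. I would also take a moment to note that $U$ need not be a union of open stars: one covers $U$ by the pieces $U\cap U_v$, checks smoothness there, and concludes by the sheaf property of smoothness.
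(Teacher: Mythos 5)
Your proof takes essentially the same route as the paper's: reduce to the effective case, invoke \cite[Theorem 3.3]{MR4450678} for a well-behaved global section, and check smoothness by a local computation in the standard model, with your use of vertex charts and phase-monomial bookkeeping being cosmetic variants of the paper's more general local-standard charts $\CC^n\times T^{k-n}\times\RR^{k-n}$ and its explicit formula for $\pi'$. The one place to be careful is your assertion that after this choice the chart is ``literally the standard local model'' with $T$-coordinate the phases of $z$: what \cite{MR4450678} actually gives is $s = f\cdot s_0$ with $f$ a smooth $T$-valued gauge function, so either you should modify each chart by the smooth equivariant map $z\mapsto f(\pi(z))^{-1}\cdot z$ to make the section standard (as you implicitly intend), or, as the paper does, carry $f$ along and compute $\pi'(z,t,y) = t\cdot h(\pi(z,t,y))^{-1}$ directly, where $f=(g,h)\in T^n\times T^{k-n}$.
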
 		
\begin{proof} The statement for non-effective actions follows directly from the effective case and we assume the action to be effective, so that the dimension of $M$ is $2k$.
As the action is locally standard, each point in $M$ has a neighborhood which is equivariantly diffeomorphic to $\CC^n\times T^{k-n}\times \RR^{k-n}$ where the action is described by some splitting $T\cong T^n\times T^{k-n}$ such that $T^{n}$ acts on $\CC^n$ linearly and effectively, $T^{k-n}$ acts in standard fashion on itself, and the action on the $\RR^{k-n}$ component is trivial. In these coordinates $\pi$ can be identified with the map
\[ \CC^n\times T^{k-n}\times \RR^{k-n}\longrightarrow \RR^n_{\geq 0}\times \RR^{k-n},\quad ((z_1,\ldots,z_n),t,y)\longmapsto ((|z_1|^2,\ldots,|z_n|^2),y).\]
We note that it is sufficient to prove the lemma in the case $\pi^{-1}(U)\rightarrow U$ is
of this form since smoothness is a local issue and dividing out bigger subgroups from the
target preserves smoothness. In this case, the largest occurring isotropy
is, with respect to the splitting of $T$, the subgroup $H=T^n$, so that $T/H\cong
T^{k-n}$.

The above description of the orbit map admits the standard section
\[s_0\colon ((x_1,\ldots,x_n),y)\longmapsto ((\sqrt{x_1},\ldots,\sqrt{x_n}),1,y).\]
By \cite[Theorem 3.3]{MR4450678} there is a global section
 $s\colon P\rightarrow M$ with the following property: for any coordinates as above there
 is a smooth function $f\colon U\rightarrow T$ such that $s(x,y)=f(x,y)s_0(x,y)$. The
 section $s$ induces an equivariant homeomorphism \[\CC^n\times
 T^{k-n}\times\RR^{k-n}\longleftarrow \RR^n_{\geq 0}\times \RR^{k-n}\times T/_\sim.\]
 Write $f(x)=(g(x),h(x))\in T^n\times T^{k-n}$. The map $\pi'$ in the lemma corresponds to
 the composition
\[\CC^n\times T^{k-n}\times\RR^{k-n}\longrightarrow \RR^n_{\geq 0}\times \RR^{k-n}\times T/_\sim\longrightarrow T^{k-n}\]
which can be checked to be the map $(z,t,y)\mapsto t\cdot h(\pi(z,t,y))^{-1}$. In
particular, it is smooth.
\end{proof}
	
In this paper, we will only be interested in four-dimensional quasitoric manifolds, which
are automatically strongly quasitoric, see \cite{MR3030690}.	

\section{From equivariant fiber bundles to graphs}\label{sec:equivariant fiber bundles}

In this section we prove the theorem below and discuss the necessity of the conditions in the theorem through counterexamples.

\begin{theorem}\label{thm: induced graph bundles}
Let $F\rightarrow M\rightarrow B$ be a fiber bundle in which all spaces are $T$-GKM manifolds and the projection map $M\to B$ is equivariant.
\begin{enumerate}[label = $(\roman*)$]
\item Then the induced maps on one-skeleta induce a GKM fibration.
\item If additionally, for each vertex $v$ of the graph of $M$, the weights of any horizontal edge adjacent to $v$ and any two vertical edges adjacent to $v$ are linearly independent, then transport along a horizontal edge induces a graph isomorphism between the respective fiber graphs.
\item Assume that for each vertex $v$ of the graph of $M$, the weights of any horizontal edge adjacent to $v$ and any three vertical edges adjacent to $v$ are linearly independent. Furthermore assume that any adjacent set of vertical edges can be completed to a basis of the weight lattice. Then the induced GKM fibration is a GKM fiber bundle.
\end{enumerate}
\end{theorem}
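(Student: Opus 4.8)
The plan is to unwind the definitions of GKM fibration and GKM fiber bundle and check that the geometric structure of an equivariant fiber bundle of GKM manifolds produces exactly the required combinatorial data. For part $(i)$, the key geometric fact is that the projection $\pi\colon M\to B$ is $T$-equivariant, so it sends $M^T$ to $B^T$ and sends $T$-invariant $2$-spheres in $M$ either to $T$-invariant $2$-spheres in $B$ (the horizontal edges) or to fixed points (the vertical edges, which live inside the fiber $F_{\pi(v)}=\pi^{-1}(\pi(v))$ over a fixed point, a GKM manifold in its own right by the local triviality). The differential of $\pi$ at a fixed point $v$ identifies the horizontal part of the isotropy representation $T_vM$ with $T_{\pi(v)}B$, which forces $\alpha^B(\pi(e))=\alpha(e)$ for horizontal edges and gives the bijection $H_v\to E(B)_{\pi(v)}$; one then verifies that the geodesic/holonomy connection on $M$ restricts to a connection compatible with the one on $B$ and respecting the horizontal/vertical splitting. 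This is essentially the argument of \cite{MR4634089}, and I would present it compactly.

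For part $(ii)$, fix a horizontal edge $e$ of $\Gamma_M$ covering $\epsilon\in E(B)$, with initial vertex $p$ and terminal vertex $q$; it corresponds to an invariant $2$-sphere $S$ in $M$, and the bundle restricted to the image $2$-sphere $\pi(S)\subset B$ trivializes equivariantly, giving a $T$-equivariant diffeomorphism $F_p\times S\cong \pi^{-1}(\pi(S))$ that restricts to the twisted-equivariant identification $F_p\cong F_q$ carrying the fiber graphs to one another. The content is that this identification agrees with the combinatorial holonomy map $\Phi_e$ from Definition \ref{defn:GKMfiberbundle}: transport $\nabla_{\tilde e}$ along the horizontal lift must send a vertical edge $e'$ at $v$ to a vertical edge at $\Phi_e(v)$. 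Here the linear-independence hypothesis enters exactly through Remark \ref{rem:horiztransportunique}: since the weight of the horizontal edge together with any two adjacent vertical edges is linearly independent, the connection $\nabla_{\tilde e}$ on vertical edges is uniquely determined, so it must coincide with the transport coming from the geometric trivialization, which visibly respects the graph structure of the fiber. Hence $\Phi_e$ is a genuine graph isomorphism $\Gamma_p\to\Gamma_q$.

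For part $(iii)$, it remains to upgrade each such $\Phi_e$ to an isomorphism of GKM graphs, i.e.\ to produce a linear automorphism $\Psi_e\colon \ZZ^m\to \ZZ^m$ with $\alpha(\Phi_e(e'))=\Psi_e(\alpha(e'))$ for all vertical edges $e'$. The natural candidate for $\Psi_e$ is the holonomy of the connection along $\epsilon$, which by the connection compatibility (ii) of Definition \ref{defn:GKMfibration} acts on the weight of a vertical edge $e'$ at $v$ by $\alpha(e')\mapsto \alpha(e')+c\,\alpha(e)$ for some $c\in\ZZ$ depending on $e'$ — this is how a single $\nabla_{\tilde e}$ transforms labels. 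The point is to show that this "add a multiple of $\alpha(e)$" rule is the restriction of a \emph{single} linear map. Fix a vertex $v\in V(\Gamma_p)$ and a maximal adjacent set of vertical edges $e_1',\dots,e_{n-m}'$ at $v$; by hypothesis their weights extend to a basis of $\ZZ^m$, which we complete by $\alpha(e)$ (legitimate since $\alpha(e)$ together with any two of the $\alpha(e_i')$ — in particular any one — is independent, and after a moment's bookkeeping $\alpha(e)$ together with all of them spans a full-rank sublattice of index equal to that controlled by the completion hypothesis; if the completion hypothesis is to be read as literally giving a $\ZZ$-basis this is immediate). Define $\Psi_e$ on this basis by sending $\alpha(e_i')\mapsto \alpha(\Phi_e(e_i'))$ and $\alpha(e)\mapsto \alpha(e)$; one checks using Definition \ref{defn:GKMfibration}(i) and the connection rule that this is a well-defined lattice automorphism. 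Then $\Psi_e(\alpha(e_i')) = \alpha(\Phi_e(e_i'))$ holds on the chosen edges by construction, and for an arbitrary vertical edge $f'$ one propagates the identity: write $f'$ reached from one of the $e_i'$ by a chain of vertical connection transports inside $\Gamma_p$, apply compatibility of $\nabla$ with $\Phi_e$ (the three-vertical-edges independence guarantees this transport is itself uniquely determined, again by Remark \ref{rem:horiztransportunique}, so it is respected by $\Phi_e$), and use that the canonical connection expresses $\alpha(f')$ as an integer combination of the $\alpha(e_i')$ and $\alpha(e)$ preserved by $\Psi_e$.

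The main obstacle I anticipate is precisely this last step: verifying that the local prescription $\Phi_e$ dictated by the unique vertical connection transport is globally linear on the fiber graph, i.e.\ that the integer $c$ in $\alpha(e')\mapsto\alpha(e')+c\,\alpha(e)$ assembles coherently across all vertical edges of $\Gamma_p$ into one lattice automorphism. This requires carefully tracking how the connection of $\Gamma_M$ intertwines with that of the fiber graph $\Gamma_p$, and it is the place where all three hypotheses of $(iii)$ — three-fold vertical independence (for uniqueness of the relevant transports), the basis-completion condition (so that $\Psi_e$ can be defined integrally on all of $\ZZ^m$), and the compatibility of $\Phi_e$ with connections inherited from $(ii)$ — get used simultaneously. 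The geometric input (equivariant local triviality over $2$-spheres) does the heavy lifting of producing the diffeomorphism $\Phi_e$; the combinatorial hypotheses are what force it to be an isomorphism in the strong, label-respecting sense of a GKM fiber bundle.
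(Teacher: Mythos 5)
Your plan for part $(i)$ essentially defers to \cite{MR4634089} and is too coarse: the nontrivial content there is not just that $\pi$ sends invariant spheres to spheres or fixed points, but that for a sphere $S\subset M$ whose tangent summand at $p$ is horizontal, the \emph{other} $T$-fixed point of $S$ lies in the fiber $F_{\overline{q}}$ over the far end $\overline{q}$ of $\pi(S)$. The paper establishes this by pulling the bundle back along a path $\gamma\colon I\to \pi(S)$, using that $\gamma$ is $U$-equivariantly nullhomotopic (where $U$ is the generic isotropy on $S$) to trivialize $U$-equivariantly, and then invoking Lemma \ref{lem: fiberstuff} to conclude $F_{\overline{q}}^U=F_{\overline{q}}^T$, so the point reached is actually a $T$-fixed point. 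Your sketch omits this step entirely, and it is the crux. Relatedly, in part $(ii)$ your claimed \emph{$T$-equivariant} trivialization of the bundle over the $2$-sphere $\pi(S)$ is false in general (otherwise the whole problem would be vacuous); the correct statement, and the one the paper uses, is a $U$-equivariant trivialization over a \emph{path} from $\overline{p}$ to $\overline{q}$, which then yields a $U$-equivariant (not $T$-equivariant) identification $F_{\overline{p}}\cong F_{\overline{q}}$. The paper then observes that under the hypotheses of $(ii)$ the restricted $U$-action on the fiber is itself GKM, with GKM graph given by restricting the $T$-GKM graph; this is what lets the $U$-equivariant identification respect the graph structure. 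Your argument never articulates why the identification respects the fiber graph beyond asserting it is "visible."

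For part $(iii)$ there is a more substantial gap. Your plan is to construct $\Psi_e$ by hand: take a maximal adjacent set of vertical edges, extend their labels to a basis of $\ZZ^m$ "by $\alpha(e)$," declare $\Psi_e$ to fix $\alpha(e)$ and send each $\alpha(e_i')\mapsto\alpha(\Phi_e(e_i'))$, then propagate. But the basis-completion hypothesis of $(iii)$ only says the vertical labels at a vertex extend to \emph{some} $\ZZ$-basis of the weight lattice; it does not say that $\alpha(e)$ is among the completing elements, and the parenthetical bookkeeping you offer does not repair this. Moreover, even if $\alpha(e)$ were available, the vertical labels together with $\alpha(e)$ generally span only $n-m+1<m$ directions, so $\Psi_e$ is still undefined on the rest of the lattice and you never say what to do there. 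The paper resolves this with a separate lemma (Lemma \ref{lem: automorphismstuff}): the restricted $U$-GKM graphs of $F_{\overline{p}}$ and $F_{\overline{q}}$ are GKM$_3$ under the hypotheses of $(iii)$, their labels are part of a $\ZZ$-basis of $\ZZ^m$ by the completion hypothesis, and the $U$-equivariant graph isomorphism from $(ii)$ — whose compatible linear map is the identity on the $U$-weight lattice — can then be \emph{lifted} to a linear automorphism of the full lattice $\ZZ^m$ that is compatible with the $T$-labels. That lifting lemma is where all the careful sign-and-connection bookkeeping you anticipate as the "main obstacle" actually happens, and it is more delicate than what you sketch: one first shows the lifted $\Psi$ commutes with $\nabla$ using the GKM$_3$ condition, and only then propagates compatibility along the connection. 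Your proposal identifies the correct pressure points but does not contain the argument that closes them.
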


Statement $(i)$ was proved prior to the present paper in \cite{MR4634089}. However we did
not deal with the notion of GKM fiber bundles as it was not relevant in the low
dimensional case considered in \cite{MR4634089}. While there is a large overlap in the
arguments, the situation of GKM fiber bundles needs a refined viewpoint and additional
arguments in several places. Thus, it seems appropriate to rewrite the whole completed
argument here, as referencing refinements of certain steps in the old argument would be
rather inconvenient.

\begin{remark}\label{rem: pessimistic rem}
We point out that the conditions in $(ii)$ and $(iii)$ are not necessary for the induced
GKM fibration to be a GKM fiber bundle. In fact they are not satisfied by many of the
geometric realizations of GKM fiber bundles that we construct in this paper. However as it
turns out, it is not clear what the right geometric counterpart to the combinatorial
situation of a GKM fiber bundle should be. As illustrated by the
counterexamples \ref{ex: cool counterexample} and \ref{ex: long counterexample} below,
being a fiber bundle with equivariant projection is not sufficient. The counterexamples
are furthermore designed to show that several other natural geometric conditions fail to
assure the properties of a GKM fiber bundle on the combinatorial side.
\end{remark}

\begin{ex}\label{ex: cool counterexample}
We give an example of an equivariant fiber bundle of GKM manifolds such that the induced
graph fibration is the collapse of the dashed edges and the
identification of the solid edges in the picture

\begin{center}
\begin{tikzpicture}
\draw[very thick, dashed] (0,0) -- ++(1,1) -- ++(0,1) -- ++(-1.5,1.5) -- ++(-1,-1) -- ++ (0,-1) --++ (1.5,-1.5);

\draw[very thick, dashed] (7,0) -- ++(1,1) -- ++(0,1) -- ++(-1.5,1.5) -- ++(-1,-1) -- ++ (0,-1) --++ (1.5,-1.5);

\draw[very thick] (1,1) --++ (7,0);
\draw[very thick] (1,2) --++ (7,0);
\draw[very thick] (-1.5,1.5) --++ (7,0);
\draw[very thick] (-1.5,2.5) --++ (7,0);
\draw[very thick] (0,0) -- (6.5,3.5);
\draw[very thick] (-0.5,3.5) -- (7,0);

\node at (0,0)[circle,fill,inner sep=2pt]{};
\node at (1,1)[circle,fill,inner sep=2pt]{};
\node at (1,2)[circle,fill,inner sep=2pt]{};
\node at (-1.5,1.5)[circle,fill,inner sep=2pt]{};
\node at (-1.5,2.5)[circle,fill,inner sep=2pt]{};
\node at (-0.5,3.5)[circle,fill,inner sep=2pt]{};

\node at (7,0)[circle,fill,inner sep=2pt]{};
\node at (8,1)[circle,fill,inner sep=2pt]{};
\node at (8,2)[circle,fill,inner sep=2pt]{};
\node at (5.5,1.5)[circle,fill,inner sep=2pt]{};
\node at (5.5,2.5)[circle,fill,inner sep=2pt]{};
\node at (6.5,3.5)[circle,fill,inner sep=2pt]{};

\end{tikzpicture}
\end{center}
In particular transport along horizontal edges is not a graph automorphism between the
fibers; consequently, statements $(ii)$ and $(i i i)$ of Theorem \ref{thm: induced graph
bundles} are false without additional assumptions on the weights.

Let $T=T^2$ and $x,y\in \mathfrak{t}^*$ denote the dual basis to the standard basis. For some $z$ in the weight lattice we write $S^2_z$ for the sphere with the action given by $z$. For the construction we take $S^2_x\times S^2_y$ with the standard action. Denoting by $N,S$ the fixed points of $S^2$, we now blow up in the points $(N,N)$ and $(S,S)$, which gives a (quasi)toric  manifold $F$ with GKM graph
\begin{center}
\begin{tikzpicture}
\draw[very thick] (0,0) -- ++(1,-1) -- ++(2,0) -- ++(0,2) -- ++(-1,1) -- ++ (-2,0) --++ (0,-2);
\node at (0,0)[circle,fill,inner sep=2pt]{};
\node at (1,-1)[circle,fill,inner sep=2pt]{};
\node at (3,-1)[circle,fill,inner sep=2pt]{};
\node at (3,1)[circle,fill,inner sep=2pt]{};
\node at (2,2)[circle,fill,inner sep=2pt]{};
\node at (0,2)[circle,fill,inner sep=2pt]{};

\node at (-0.4,1){$y$};
\node at (-0.2,-0.6){$x-y$};
\node at (2,-1.4){$x$};
\node at (1,2.4){$x$};
\node at (3.4,0){$y$};
\node at (3.2,1.6){$x-y$};

\node at (3.4,-1.4){$(N,S)$};
\node at (-0.4,2.4){$(S,N)$};

\end{tikzpicture}
\end{center}

We now consider the base manifold $B=S^2_{x+y}$ and use an equivariant clutching construction to produce a fiber bundle $F\rightarrow M\rightarrow B$. Let $U=\{(s,\overline{s})\}\subset T$ denote the kernel of the $T$-action on $B$. Then any $U$-equivariant automorphism $\varphi\colon F\rightarrow F$ extends uniquely to a $T$-equivariant automorphism \[S^1_{x+y}\times F\longrightarrow S^1_{x+y}\times F\]
such that $(1,p)\mapsto (1,\varphi(p))$. Using this to glue two copies of $D_{x+y}^2\times F$ over the boundary circle gives an equivariant fiber bundle $F\rightarrow M\rightarrow B$. Hence to finish the construction we construct a $U$-equivariant automorphism of $F$ that swaps $(N,S)$ and $(S,N)$ while leaving the other fixed points fixed.

We consider the conjugation $p\mapsto \overline{p}$ on $S^2$ given by reflection at a
fixed hyperplane through $N,S$. In particular with respect to the standard action one has
$\overline{t\cdot p}=\overline{t}\cdot\overline{p}$. Set $\psi\colon S^2\times
S^2\rightarrow S^2\times S^2$ as $(p,q)\mapsto (\overline{q},\overline{p})$. Now $\psi$ is
$U$-equivariant and swaps $(N,S)$ and $(S,N)$. Finally, we will $U$-equivariantly isotope $\psi$ to
be the identity near $(N,N)$ and $(S,S)$. Having done that it induces the desired
automorphism on the blowup $F$.

We find a neighbourhood of $(S,S)$ which is $T$-equivariantly homeomorphic to
$D^4_{x,y}\subset \C_x\times \C_y$ such that $\psi$ corresponds to $(v,w)\mapsto
(\overline{w},\overline{v})$. Identifying $S^1\cong U$ via $s\mapsto
(s,\overline{s})$ the $U$-action is of the form $s\cdot (v,w)=(sv,\overline{s}w)$ where on
the right hand side we use standard complex multiplication. Let $\eta\colon I\rightarrow
\U(2)$ be a path from
\[\begin{pmatrix}
1 & 0\\ 0&1
\end{pmatrix}\quad\text{to}\quad \begin{pmatrix}
0&1\\1&0
\end{pmatrix}\]
which is constant near the endpoints. Denote by $C$ the map $(v,w)\mapsto (v,\overline{w})$. Then
\[D^4\longrightarrow D^4,\quad p\longmapsto C(\eta(\|p\|)C(p)).\]
Is $U$-equivariant. It is the identity near $0$ and extends by $\psi$ near the boundary. Modifying $\psi$ in this way and doing the analogous construction near $(N,N)$ yields the desired automorphism $\varphi$ of $F$.
\end{ex}

\begin{ex}\label{ex: long counterexample}
We show that, even in the situation of $(ii)$, of Theorem \ref{thm: induced graph
bundles}, one does not necessarily have a GKM fiber bundle on the combinatorial side as it
is possible to still violate the condition that the weights uniformly transform by an
automorphism when transported horizontally. In particular, part $(iii)$ of the theorem is
false without additional assumptions beyond those in $(ii)$. We give a counterexample in
two steps: the first construction will produce a $T^3$-equivariant fiber bundle
\[S^6\longrightarrow M\longrightarrow S^2\] such that the fiber over one fixed point is effective
while the other one is not. In particular, they will not be twisted equivariantly
homeomorphic and will not have isomorphic GKM graphs even though horizontal transport
(which is uniquely defined by Remark \ref{rem:horiztransportunique})
respects the graph structures. Furthermore it is interesting to observe from the
construction that the structure group of this example commutes with the action. Hence this
condition does not ensure that the result is a GKM fiber bundle. We point out that this
example could be simplified while retaining the same properties. However we chose to give
this non-minimal version as we need it for the second construction. Using
the total space $M$ we will construct a $T^3$-equivariant fiber bundle
\[M\longrightarrow N\longrightarrow S^2\]
which does not induce a GKM fiber bundle even though it satisfies the following condition
($\ast$): Each point in the base admits an invariant neighborhood $V$ and equivariant
local trivializations $N|_V\cong V\times M$ where the action on the right is the diagonal
action with respect to the actions on $V$ and $M$. Note that in this case fibers over
fixed points are $T$-equivariantly homeomorphic and in particular their GKM graphs are
abstractly isomorphic. However the underlying isomorphism of graphs will be a different
one than the one induced by horizontal transport. 

For the construction, let $T= T^3$ and $x,y,z\in \mathfrak{t}^*$ be weights dual to the standard basis. We consider the $T$-space $S^6_{x,y,x+2y}\subset \C^3\oplus \R$ where the weights in the index describe the action on the $\C$-factors. We now glue $D^2_{x-y-z}\times S^6_{x,y,x+2y}$ to $D^2_{x-y-z}\times S^6_{y+z,x-z,x+2y}$ over the boundary along the $T^3$-equivariant automorphism
\[S^1_{x-y-z}\times S^6_{x,y,x+2y} \longrightarrow S^1_{x-y-z}\times S^6_{y+z,x-z,x+2y},\quad (s,(v_1,v_2,v_3,h))\longmapsto (s,(\overline{s}v_1, sv_2, v_3,h))\]
Using this we build a $T$-equivariant fiber bundle $S^6\rightarrow M\rightarrow S^2$, whose GKM graph fibration is
\begin{center}
\begin{tikzpicture}
\node at (0,0)[circle,fill,inner sep=2pt]{};
\node at (4,0)[circle,fill,inner sep=2pt]{};

\node at (2,0.9) {$y+z$};
\node at (2,0.3) {$x-z$};
\node at (2,-0.4) {$x+2y$};

\node[inner sep = 0pt] (a) at (0,0){};
\node[inner sep = 0pt] (b) at (4,0){};

\draw[very thick] (a) -- (b);
\draw[very thick] (a) to[out=35, in=145] (b);
\draw[very thick] (a) to[out=-35, in=-145] (b);

\node at (0,-4)[circle,fill,inner sep=2pt]{};
\node at (4,-4)[circle,fill,inner sep=2pt]{};

\node at (2,-3.1) {$x$};
\node at (2,-3.7) {$y$};
\node at (2,-4.4) {$x+2y$};

\node[inner sep = 0pt] (c) at (0,-4){};
\node[inner sep = 0pt] (d) at (4,-4){};

\draw[very thick] (c) -- (d);
\draw[very thick] (c) to[out=35, in=145] (d);
\draw[very thick] (c) to[out=-35, in=-145] (d);

\draw[very thick] (a) -- (c);
\draw[very thick] (b) -- (d);
\node at (-1,-2) {$x-y-z$};
\node at (5,-2) {$x-y-z$};

\draw[very thick, ->] (6.5,-2) --++ (2,0);

\draw[very thick] (9.5,0) --++ (0,-4);
\node at (10.5,-2) {$x-y-z$};
\node at (9.5,0)[circle,fill,inner sep=2pt]{};
\node at (9.5,-4)[circle,fill,inner sep=2pt]{};

\end{tikzpicture}
\end{center}

Note that the upper fiber is effective since the weights are a basis of the weight lattice while the action on the lower fiber has a one-dimensional kernel. This concludes the construction of $M$.

Now we construct an equivariant fiber bundle with fiber $M$ over $S^2_z$. We do this by gluing two copies $D^2_z\times M$ with the diagonal action along a $T^3$-equivariant automorphism $S^1_z\times
 M\rightarrow S^1_z\times M$. To find the latter we consider a $U$-equivariant automorphism $\varphi\colon M\rightarrow M$, where $U$ is the kernel of the weight $z$ (i.e.\ the subtorus $T^2\subset T^3$ given by the first two components). For the gluing we use the unique $T^3$-equivariant map mapping $(1,p)\mapsto (1, \varphi(p))$.
 
To define $\varphi$ we observe that with the restricted $U$-action (i.e., setting $z=0$),
the two pieces glued for the construction of $M$ are $D^2_{x-y}\times S^6_{x,y,x+2y}$ and
$D^2_{x-y}\times S^6_{y,x,x+2y}$. Between them we have the
$U$-equivariant map
\[ (p,(v_1,v_2,v_3,h))\longmapsto (p,(v_2,v_1,v_3,h))\]
which we can use to swap the two pieces. This is compatible with the gluing and defines a $U$-equivariant automorphism $\varphi$ of $M$ which swaps the two fibers. This gives the desired $T^3$-equivariant fiber bundle $M\rightarrow N\rightarrow S^2_z$ with GKM graph

\begin{center}
\begin{tikzpicture}
\node at (0,0)[circle,fill,inner sep=2pt]{};
\node at (4,0)[circle,fill,inner sep=2pt]{};

\node at (2,0.9) {$y+z$};
\node at (2,0.3) {$x-z$};
\node at (2,-0.4) {$x+2y$};

\node[inner sep = 0pt] (a) at (0,0){};
\node[inner sep = 0pt] (b) at (4,0){};

\draw[very thick] (a) -- (b);
\draw[very thick] (a) to[out=35, in=145] (b);
\draw[very thick] (a) to[out=-35, in=-145] (b);

\node at (0,-4)[circle,fill,inner sep=2pt]{};
\node at (4,-4)[circle,fill,inner sep=2pt]{};

\node at (2,-3.1) {$x$};
\node at (2,-3.7) {$y$};
\node at (2,-4.4) {$x+2y$};

\node[inner sep = 0pt] (c) at (0,-4){};
\node[inner sep = 0pt] (d) at (4,-4){};

\draw[very thick] (c) -- (d);
\draw[very thick] (c) to[out=35, in=145] (d);
\draw[very thick] (c) to[out=-35, in=-145] (d);

\draw[very thick] (a) -- (c);
\draw[very thick] (b) -- (d);
\node at (-1,-2) {$x-y-z$};


\node at (11,-2) {$x-y-z$};

\node at (10,0)[circle,fill,inner sep=2pt]{};
\node at (6,0)[circle,fill,inner sep=2pt]{};

\node at (8,0.9) {$y+z$};
\node at (8,0.3) {$x-z$};
\node at (8,-0.4) {$x+2y$};

\node[inner sep = 0pt] (a1) at (6,0){};
\node[inner sep = 0pt] (b1) at (10,0){};

\draw[very thick] (a1) -- (b1);
\draw[very thick] (a1) to[out=35, in=145] (b1);
\draw[very thick] (a1) to[out=-35, in=-145] (b1);

\node at (10,-4)[circle,fill,inner sep=2pt]{};
\node at (6,-4)[circle,fill,inner sep=2pt]{};

\node at (8,-3.1) {$x$};
\node at (8,-3.7) {$y$};
\node at (8,-4.4) {$x+2y$};

\node at (1,-3) {$z$};
\node at (1,-1) {$z$};
\node at (9,-3) {$z$};
\node at (9,-1) {$z$};

\node[inner sep = 0pt] (c1) at (6,-4){};
\node[inner sep = 0pt] (d1) at (10,-4){};

\draw[very thick] (c1) -- (d1);
\draw[very thick] (c1) to[out=35, in=145] (d1);
\draw[very thick] (c1) to[out=-35, in=-145] (d1);

\draw[very thick] (a1) -- (c1);
\draw[very thick] (b1) -- (d1);

\draw[very thick] (a) -- (c1);
\draw[very thick] (b) -- (d1);
\draw[very thick] (c) -- (a1);
\draw[very thick] (d) -- (b1);

\end{tikzpicture}
\end{center}
As the weights of any horizontal edge and any two adjacent vertical edges
are linearly independent, horizontal transport is uniquely defined by Remark
\ref{rem:horiztransportunique}. It corresponds to the automorphism $\varphi$ on the graph
level. There is no automorphism of $\mathfrak{t}^*$ to make this compatible with the
weights: the weights of any triple edge need to get mapped onto those of the target triple
edge but in one case their span is $2$-dimensional while in the other it is
$3$-dimensional. We point out that condition ($\ast$) is fulfilled since the inclusions of the
two individual summands used for the gluing in the construction of $N$ are equivariant
trivializations.

\end{ex}

We now turn towards the proof of Theorem \ref{thm: induced graph bundles} and prove some preliminary lemmas.

\begin{lem}\label{lem: fiberstuff}
Let $F\rightarrow M\rightarrow B$ be an equivariant fiber bundle of GKM manifolds and $U\subset T$ be a codimension $1$ subtorus such that $M^U$ has a connected component $S$ with $F\cap S\neq \emptyset$ and $S\not\subset F$. Then $F^U = F^T$.
\end{lem}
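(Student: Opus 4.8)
The plan is to phrase everything in terms of the primitive weight $\beta\in\ZZ_\mft^*$ determined by $U=\ker(\beta\colon T\to S^1)$ (unique up to sign, since $U$ has codimension one), and to exploit the following structural fact about an arbitrary GKM $T$-manifold $N$, applied below to $M$, $B$ and $F$: \emph{the connected components of $N^U$ are either isolated points of $N^T$, or $T$-invariant $2$-spheres whose weight is $\beta$}; in particular $N^U\ne N^T$ precisely when the GKM graph of $N$ has an edge labelled $\beta$. Indeed, a point $p\in N^U\setminus N^T$ has $\dim Tp=1$ (at most $\dim(T/U)=1$ since $U$ fixes $p$, at least $1$ since $p\notin N^T$), so $p$ lies on an invariant $2$-sphere inside the one-skeleton $N_1$; the isotropy at the non-pole point $p$ is the kernel of that sphere's weight, and since this kernel contains $U$ and has codimension one it equals $U$, forcing the weight to be $\pm\beta$; finally Definition \ref{defn:abstractgkmgraph}(i) permits at most one edge labelled $\beta$ at any vertex, so at each of its two poles such a sphere exhausts the $U$-fixed tangent directions, and it is therefore already a whole component of $N^U$.

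Let $b_0\in B^T$ be the fixed point with $F=\pi^{-1}(b_0)$. \textbf{Step 1} is to show that the hypothesis forces $B$ to contain a $T$-invariant $2$-sphere of weight $\beta$ through $b_0$. Since $S$ meets $F$ but is not contained in it, $\pi|_S$ is non-constant; as $S\subseteq M^U$ its image $\pi(S)$ lies in $B^U$, is connected, contains $b_0$, and has at least two points — otherwise $S\subseteq\pi^{-1}(\pi(S))=\pi^{-1}(b_0)=F$. By the structural fact the component $S_B$ of $B^U$ containing $\pi(S)$ is then an invariant $2$-sphere of weight $\beta$, and $b_0\in S_B\cap B^T$, which is the set of two poles of $S_B$; hence $b_0$ lies on $S_B$, so the GKM graph of $B$ has an edge labelled $\beta$ at $b_0$.

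\textbf{Step 2} then deduces that $F$ contains no $T$-invariant $2$-sphere of weight $\beta$. If $S_F\subseteq F$ were one and $q$ a pole of it, then $q\in M^T$ and $\pi(q)=b_0$; since $\pi$ is a submersion with $F=\pi^{-1}(b_0)$, the differential gives a split short exact sequence of $T$-representations
\[
0\longrightarrow T_qF\longrightarrow T_qM\xrightarrow{\ d\pi_q\ }T_{b_0}B\longrightarrow 0,
\]
so $T_qM\cong T_qF\oplus T_{b_0}B$. Here $\beta$ occurs as a weight of $T_qF$ (tangent to $S_F$ at $q$) and of $T_{b_0}B$ (tangent to $S_B$ at $b_0$), hence with multiplicity at least two among the weights of $T_qM$, i.e.\ as the label of at least two edges of the GKM graph of $M$ at $q$ — contradicting Definition \ref{defn:abstractgkmgraph}(i). \textbf{Step 3} finishes the argument: $F^T\subseteq F^U$ is automatic, while a point of $F^U\setminus F^T$ would, by the structural fact applied to the GKM manifold $F$, lie on a $T$-invariant $2$-sphere of $F$ of weight $\beta$, which is impossible by Step 2; hence $F^U=F^T$.

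The step I expect to be the main obstacle is Step 1: controlling the continuous image $\pi(S)$ and recognising it inside a single connected component of $B^U$ which is an invariant $2$-sphere through the fixed point $b_0$. This rests squarely on the structural description of $N^U$ for GKM manifolds, whose proof in turn uses the linear independence axiom; once that is established, Steps 2 and 3 are essentially bookkeeping.
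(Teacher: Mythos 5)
Your proof is correct and rests on the same two pillars as the paper's: the structure of connected components of $N^U$ for a GKM manifold $N$ (isolated fixed points or invariant $2$-spheres of weight $\pm\beta$), and the $T$-equivariant splitting $T_qM\cong T_qF\oplus T_{b_0}B$ at a fixed point $q\in F^T$ combined with the linear independence of the weights of $T_qM$. The one genuine variation is in how you produce $\beta$ among the weights of $T_{b_0}B$. The paper argues locally at a point $p\in S\cap F$: it shows $p$ is necessarily a $T$-fixed point (two distinct invariant $2$-spheres of $M$, namely $S$ and a sphere of $F$ through $p$, can meet only at a fixed point), and then uses, implicitly, that $T_pS\not\subset T_pF$ so the weight of $S$ must land in the base summand. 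You instead argue globally by pushing $S$ down via $\pi$, noting that $\pi(S)\subset B^U$ is connected, contains $b_0$, and is not a single point, and invoking the structural fact for $B$; this makes explicit a step the paper leaves implicit and replaces the two-sphere collision argument, but it buys no extra generality and the arguments are of essentially the same depth.
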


\begin{proof}
The component $S$ is an invariant $2$-sphere of $M$ and it intersects $F$ in a point $p$ of isotropy codimension $1$ or $0$. Since $F$ is GKM, $p$ is contained in an invariant $2$-sphere contained in $F$ which is hence distinct from $S$. Thus two invariant $2$-spheres of $M$ meet in $p$, implying that $p$ is a fixed point.
For every $q\in F^T$ one has a $T$-equivariant splitting $T_q M= T_q F\oplus T_{\overline{p}} B$ where $\overline{p}$ is the image of $p$ in $B$. By what we have established the right hand summand contains an irreducible representation whose kernel contains $U$ and due to linear independence of the weights in $M$ it follows that no such weight occurs in the left hand summand. It follows that $F$ contains no invariant $2$-sphere whose isotropy contains $U$ and thus $F^U= F^T$.
\end{proof}

\begin{lem}\label{lem: automorphismstuff}
Let $\Gamma,\Gamma'$ be GKM graphs with labels in $\Z^n/\pm 1$ and the property that at
each point the edge weights are part of a $\Z$-basis of $\Z^n$. Furthermore let
$p\colon\Z^n\rightarrow \Z^k$ be a projection such that $\Gamma$ and
$\Gamma'$, with the induced labels in $\Z^k/\pm 1$, are GKM$_3$ graphs. Then
the linear transformation $\Z^k\rightarrow \Z^k$ associated to an
isomorphism $\Gamma\rightarrow \Gamma'$ of the $\Z^k$-GKM graphs can be
lifted to $\Z^n$ to obtain an isomorphism of the $\Z^n$-GKM graphs.
\end{lem}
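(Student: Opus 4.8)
\emph{Proof plan.} The plan is to determine the required linear transformation at one vertex by matching labels and then to propagate the resulting identity across all of $\Gamma$ along a connection; write $\alpha_k$ and $\alpha'_k$ for the $\Z^k/\pm1$-valued labellings obtained from $\alpha,\alpha'$ by composing with $p$. Since a $\Z^k$-GKM isomorphism carries no connection as part of its data, the decisive first step is to note that the hypotheses force the compatible connection to be unique, so that $\Phi$ must respect it. We may assume $\Gamma$, hence $\Gamma'$, connected. At a vertex of valence $\le 2$ the connection is forced combinatorially, and at a vertex of higher valence the $\mathrm{GKM}_3$-condition for the $\Z^k$-labels forces $\nabla_e$ to be unique on $E(\Gamma)_{t(e)}$ — two admissible choices for some $\nabla_e f$ would give, together with $\bar e$, three distinct edges at $t(e)$ whose $\Z^k$-labels are linearly dependent. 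Thus $\Gamma$ and $\Gamma'$ each carry a \emph{unique} connection $\nabla,\nabla'$ compatible with their $\Z^k$-labels; as a connection compatible with the $\Z^n$-labels becomes $\Z^k$-compatible after applying $p$, these agree with $\nabla$ resp.\ $\nabla'$. The push-forward $\Phi_\ast\nabla$ is a $\Z^k$-compatible connection on $\Gamma'$, so $\Phi_\ast\nabla=\nabla'$, i.e.\ $\Phi\circ\nabla_e=\nabla'_{\Phi e}\circ\Phi$. Finally, $\Z^n$-compatibility of $\nabla$ gives $\Span_\Z\{\alpha(f):f\in E(\Gamma)_{t(e)}\}=\Span_\Z\{\alpha(f):f\in E(\Gamma)_{i(e)}\}$, so by connectedness all labels of $\Gamma$ (resp.\ $\Gamma'$) lie in a single direct summand $L\subseteq\Z^n$ (resp.\ $L'\subseteq\Z^n$) of rank equal to the valence.

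Fix $v_0\in V(\Gamma)$. The labels at $v_0$ form a $\Z$-basis of $L$ and the labels at $\Phi v_0$ form a $\Z$-basis of $L'$. Choosing the lift of each $\alpha'(\Phi e)$, $e\in E(\Gamma)_{v_0}$, so that $p(\tilde\alpha'(\Phi e))=\psi(p(\tilde\alpha(e)))$ (possible, since $p(\tilde\alpha'(\Phi e))=\pm\psi(p(\tilde\alpha(e)))$ and $p(\tilde\alpha(e))\ne 0$), define the linear isomorphism $\Psi_0\colon L\to L'$ by $\Psi_0(\tilde\alpha(e))=\tilde\alpha'(\Phi e)$; then $p\circ\Psi_0=\psi\circ(p|_L)$. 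We claim that $\alpha'(\Phi f)=\Psi_0(\alpha(f))$ in $\Z^n/\pm1$ for every edge $f$, and prove this by induction on the distance of $i(f)$ from $v_0$. For the inductive step, let $f\in E(\Gamma)_w$ with $w$ joined to a vertex $v$ at which the claim holds by an edge $\hat e$, and let $g\in E(\Gamma)_v$ be the unique edge with $\nabla_{\hat e}g=f$. Compatibility of $\nabla$ with the $\Z^n$-labels gives $\tilde\alpha(f)=\varepsilon\,\tilde\alpha(g)+c\,\tilde\alpha(\hat e)$, and combining $\Phi\nabla_{\hat e}=\nabla'_{\Phi\hat e}\Phi$, compatibility of $\nabla'$, and the inductive hypothesis at $v$ yields $\tilde\alpha'(\Phi f)=\Psi_0\big(\varepsilon'\,\tilde\alpha(g)+c'\,\tilde\alpha(\hat e)\big)$ for suitable $\varepsilon,\varepsilon'\in\{\pm1\}$ and $c,c'\in\Z$. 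Applying $p$, using $p\Psi_0=\psi(p|_L)$ and the given equality $\alpha'_k(\Phi f)=\psi(\alpha_k(f))$, and invoking linear independence of $p(\tilde\alpha(g))$ and $p(\tilde\alpha(\hat e))$ (an axiom of the $\Z^k$-GKM graph), one concludes $(\varepsilon',c')=\pm(\varepsilon,c)$, whence $\tilde\alpha'(\Phi f)=\pm\Psi_0(\tilde\alpha(f))$. This closes the induction.

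It remains to extend $\Psi_0\colon L\to L'$ to an automorphism $\Psi$ of $\Z^n$ with $p\Psi=\psi p$; as all labels of $\Gamma$ resp.\ $\Gamma'$ lie in $L$ resp.\ $L'$, the pair $(\Phi,\Psi)$ is then an isomorphism of the $\Z^n$-GKM graphs lifting $\psi$. Complete the basis $(\tilde\alpha(e))_{e\in E(\Gamma)_{v_0}}$ of $L$ to a basis $(\tilde\alpha(e))_e\cup(u_j)_j$ of $\Z^n$. It suffices to complete the basis $(\tilde\alpha'(\Phi e))_e$ of $L'$ to a basis of $\Z^n$ by vectors $v_j$ with $p(v_j)=\psi(p(u_j))$ and then set $\Psi(\tilde\alpha(e))=\tilde\alpha'(\Phi e)$, $\Psi(u_j)=v_j$. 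Such $v_j$ exist: the residue classes of the $p(u_j)$ and of the $\psi(p(u_j))$ are generating tuples of the same length of the finitely generated abelian group $\Z^k/p(L)$, hence lie in one $\GL(\Z)$-orbit, and a subsequent correction by elements of $L'$ adjusts the $p$-images to be equal on the nose while preserving the basis property. (If one is content with an isomorphism of $\Z^n$-GKM graphs without requiring $\Psi$ to lift $\psi$, this step is immediate because $L$ and $L'$ are direct summands of $\Z^n$.) The one genuinely non-formal ingredient is the first step: the uniqueness of the compatible connection — which makes $\Phi$ transport edges in a controlled way — is exactly what powers the propagation, and it is precisely this uniqueness that the $\mathrm{GKM}_3$-hypothesis provides.
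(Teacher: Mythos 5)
Your proof is correct and follows essentially the same route as the paper's: use the $\mathrm{GKM}_3$ condition to force uniqueness of the compatible connection (equivalently, to force $\Phi$ to intertwine the canonical connections), pin down a candidate $\Psi$ at one base vertex by matching labels, and then propagate the identity $\alpha'(\Phi f)=\Psi(\alpha(f))$ over the whole graph by induction along the connection, using $\mathrm{GKM}_2$ linear independence of $p(\tilde\alpha(g)),p(\tilde\alpha(\hat e))$ to identify the integer coefficients. Two remarks on where you go beyond the paper: first, the paper simply asserts the uniqueness of the compatible connection, whereas you spell out the (correct) $\mathrm{GKM}_3$ argument for it; second, you additionally insist that the final $\Psi\colon\Z^n\to\Z^n$ satisfy $p\Psi=\psi p$. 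This extra constraint is not required by the statement as it is used (in the proof of Theorem~\ref{thm: induced graph bundles}(iii) one only needs \emph{some} $\Psi$ making $(\Phi,\Psi)$ a $\Z^n$-GKM isomorphism), the paper's own $\Psi$ is not constructed to lift $\psi$, and your argument for the stronger claim via generating tuples of $\Z^k/p(L)$ is the one imprecise spot (you conflate $\Z^k/p(L)$ with $\Z^k/p(L')$, and it is not spelled out why the required correction by $\ker p$ can always produce a complementary basis to $L'$). Since your parenthetical already notes that the weaker extension is immediate because $L$ and $L'$ are direct summands, the proof as a whole is sound; I would simply drop the stronger lifting claim, as the paper does.
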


\begin{proof}
We denote by $\alpha$ the $\Z^n/\pm 1$-axial functions on both graphs and by $\overline{\alpha}$ the induced $\Z^k/\pm 1$-axial functions. For simplicity, when it is clear from the context, we will occasionally use the same notation, for choices of lifts with fixed signs.
On both $\Gamma$ and $\Gamma'$, there is a unique connection $\nabla$ compatible with
$\alpha$. Note that it is also compatible with $\overline{\alpha}$. Let $\Phi\colon
\Gamma\rightarrow \Gamma'$ be an isomorphism of graphs and $\overline{\Psi}\colon
\Z^k\rightarrow \Z^k$ an automorphism such that
$\overline{\alpha}(\Phi(e))=\overline{\Psi}(\overline{\alpha}(e))$ for all edges. For any
adjacent edges $e,f\in E(\Gamma)$ and any sign choice for the labels
there is some unique $l\in \Z$ and choice of sign for
$\alpha(\nabla_e(f))$ such that $\alpha(\nabla_e(f))=\alpha(f)+l\alpha(e)$. Hence,
\[\overline{\alpha}(\Phi(\nabla_e(f)))=
\overline{\Psi}(\overline{\alpha(f)})+l\overline{\Psi}(\overline{\alpha}(e))\]
However we also have
\[\overline{\alpha}(\nabla_{\Phi(e)}\Phi(f))\equiv \overline{\Psi}(\overline{\alpha(f)})\mod\overline{\Psi}(\overline{\alpha}(e))\]
and it follows from the GKM$_3$ condition that $\nabla_{\Phi(e)}\Phi(f)=\Phi(\nabla_e f)$.

Choose a vertex $p$ and signs for all edge labels emanating from $p$.
We make the corresponding sign choices for the edge labels at $\Phi(p)$. By assumption
these can be completed to bases of $\ZZ^n$. Let $\Psi\colon \Z^n\rightarrow\Z^n$ be an
automorphism that maps the basis at $p$ onto an appropriate choice of basis at $\Phi(p)$
according to the map $\Phi$ on those basis elements that are weights of $E_p$. By
construction we have
\begin{equation}
\alpha(\Phi(e))=\Psi(\alpha(e)) \label{eq:projcomp}
\end{equation} for all $e\in E_p$. We claim that in fact \eqref{eq:projcomp} already holds for all edges in $\Gamma$. This follows from the following claim: Suppose \eqref{eq:projcomp} holds for adjacent edges $e,f$, then it holds for $h:=\nabla_e(f)$. 

To prove the claim, first fix sign choices for $\alpha(e)$ and
$\alpha(f)$ and the induced sign choices for $\alpha(\Phi(e)),\alpha(\Phi(f))$. Since
$\Psi$ commutes with $\nabla$, there are unique $l,m\in \Z$ and sign
choices for $\alpha(h),\alpha(\Phi(h))$ such that
\[\alpha(h)=\alpha(f)+l\alpha(e),\qquad \alpha(\Phi(h))=\alpha(\Phi(f))+m\alpha(\Phi(e)).\]
We obtain induced sign choices for $\overline{\alpha}$ and have
\begin{align*}
\overline{\Psi}(\overline{\alpha}(f)+l\overline{\alpha}(e))=\overline{\Psi}(\overline{\alpha}(h))=\pm \overline{\alpha}(\Phi(h))=\pm \overline{\alpha}(\Phi(f))\pm m\overline{\alpha}(\Phi(e)) = \pm
\overline{\Psi}(\overline{\alpha}(f)\pm m\overline{\alpha}(e))
\end{align*}
which implies $m=l$. Now since \eqref{eq:projcomp} holds for $e,f$ we obtain $\alpha(\Phi(h))=\Psi (\alpha(h))$ as desired.
\end{proof}

\begin{proof}[Proof of Theorem \ref{thm: induced graph bundles}]
Let $p\in M^T$ be a fixed point, denote $\overline{p}:=\pi(p)$ and let $F_{\overline{p}}$
the fiber over $\overline{p}$. Then $T_p M$ splits into irreducible summands belonging to
the invariant $2$-spheres in $M$ emanating from $p$. The spheres belonging to summands in
$T_p F_{\overline{p}}$ correspond to invariant spheres in $F_{\overline{p}}$ and hence to
vertical edges. We need to show that
\begin{enumerate}[label = (\alph*)]
\item Any invariant $2$-sphere $S$ belonging to an irreducible summand outside of
	$T_pF_{\overline{p}}$ connects $p$ to a $T$-fixed point $q$ in some fiber $F_{\overline{q}}$
	different from $F_{\overline{p}}$. In particular this implies that the induced map is a
	graph fibration.
\item It is possible to define a connection on the GKM graph of $M$ which, along any edge, respects horizontal and vertical edges, and which is such that the connection on horizontal edges lifts a given connection from the base graph. 
\item Under the additional condition from $(ii)$ the connection from (b) is unique in horizontal direction, and induces an isomorphism of graphs of $F_{\overline{p}}$
	and $F_{\overline{q}}$.
\item Under the additional condition of $(iii)$ the isomorphism of part (b) admits a compatible automorphism of the weight lattice such that it becomes an isomorphism of the GKM graphs of $F_{\overline{p}}$ and $F_{\overline{q}}$.

\end{enumerate}

Let $T_pM= T_pF_{\overline{p}}\oplus H_p$ an invariant decomposition and choose $S$ as
above. Then the projection $\pi\colon M\rightarrow B$ induces an equivariant isomorphism
$H_p\cong T_{\overline{p}} B$. Hence $T_p S$ corresponds to an irreducible summand in
$T_{\overline{p}} B$ and hence to an irreducible $2$-sphere $\overline{S}\subset B$
with $\overline{p}\in \overline{S}$. Let $\overline{q}$ denote the other
fixed point of $\overline{S}$.

Let $\gamma\colon I\rightarrow \overline{S}$ be an embedded path from $\overline{p}$ to $\overline{q}$ and let $U\subset T$ be the kernel of the $T$-action on $T_p S$ or equivalently the generic isotropy on $S$ and $\overline{S}$. Then we may regard $I$ as a $U$-equivariant map by equipping $I$ with the trivial $U$-action. Thus the equivariant pullback of the bundle along $\gamma$ defines an equivariant subbundle $F\rightarrow E\rightarrow I$ embedding into $M$. Since $\gamma$ is $U$-equivariantly nullhomotopic it follows that there is a $U$-equivariant trivialization
$E\cong F_{\overline{p}}\times I$. The subspace corresponding to $F_{\overline{p}}\times\{1\}$ is the fiber $F_{\overline{q}}$ over $\overline{q}$ where we set $q$ to be the point corresponding to $(p,1)$. It follows that $p$ and $q$ lie in a common connected component of $E^U\subset M^U$. Since connected components of $M^U$ are invariant $2$-spheres, we deduce that $S$ contains $q$. Now it follows from Lemma \ref{lem: fiberstuff} that $q$ is indeed a $T$-fixed point. We have proved (a).

For the proof of (b) we first construct the connection along horizontal edges. We point out that $T_pF_{\overline{p}}$ and $T_qF_{\overline{q}}$ are isomorphic as $U$-representations. To define the transport of vertical edges along a horizontal sphere $S$ we may choose any bijection between irreducible summands of these representations such that the $U$-isomorphism type is preserved (corresponding to a bijection between vertical edges such that labels fulfil the congruence relations in the definition of a compatible connection). To transport horizontal edges along $S$, we fix a connection on the GKM graph of $B$ and use the unique lift to horizontal edges. 

For the construction of the connection along vertical edges, let $p'$ be another fixed point in $F_{\overline{p}}$ such that there is a vertical edge between $p$ and $p'$.
One has $T_p M\cong T_p F_{\overline{p}}\oplus T_{\overline{p}} B$
and $T_{p'} M\cong T_{p'} F_{\overline{p}}\oplus T_{\overline{p}} B$ as
$T$-representations. Also, if $H\subset T$ denotes the kernel of the weight of the edge connecting $p,p'$ then $T_pF_{\overline{p}}$ and $T_{p'}F_{\overline{p}}$ agree as $H$-representations. Thus there is a bijection between the irreducible factors of $T_pM$ and $T_{p'}M$ that respects the decomposition in horizontal and vertical factors, preserves the $H$-isomorphism type on vertical factors and the $T$-isomorphism type on horizontal factors. This completes the proof of (b). We have shown $(i)$, i.e., that we have a GKM fibration.

For the proof of (c), i.e., of $(ii)$, note that by the arguments from (a) the fixed trivialization $E\cong F_{\overline{p}}\times I$ gives a $U$-equivariant homeomorphism $ F_{\overline{p}}\cong F_{\overline{p}}\times\{0\}\cong F_{\overline{p}}\times\{1\}\cong F_{\overline{q}}$ which sends $T$-fixed points to $T$-fixed points connected to the original point by a horizontal edge whose principal isotropy is $U$. Under the additional assumption in (c) any two weights of the $T$-action in $F_{\overline{p}}$ are linearly independent modulo the weight of $S$. Hence the restricted $U$-action on $F_{\overline{p}}$ will again be GKM. The same holds for $F_{\overline{q}}$. The GKM graph of the $U$-action on the fibers is the restriction of the GKM graph of the $T$-action. In particular the $U$-equivariant homeomorphism $F_{\overline{p}}\cong F_{\overline{q}}$ respects the graph structures. Hence in the definition of the transport of vertical edges along horizontal edges in (b), there is a unique choice and it will automatically induce an isomorphism of the underlying graphs.

In (d) the conditions of $(iii)$ imply that the GKM graphs of the restricted $U$-actions on $F_{\overline{p}}$ and $F_{\overline{q}}$ are GKM$_3$. The claim now follows from Lemma \ref{lem: automorphismstuff}.
\end{proof}
	
\section{Automorphisms of the GKM graph of $G/T$}\label{sec:autos}

We denote by $\Gamma$ the GKM graph of the $T$-action on $G/T$, as described in Theorem \ref{thm:GHZ}. We can assume that the compact connected semisimple Lie group $G$ is simply-connected, as the homogeneous space $G/T$ does not change upon passing to a cover: this is because any subgroup of the (finite) center of a simply-connected $G$ is automatically contained in any maximal torus of $G$. 

 In this section, we give basic examples of automorphisms of $\Gamma$ (see Definition \ref{defn:gkmgraphiso}) and classify those that are compatible with the standard connection.
	\begin{example}\label{rightmult}
		Clearly, $G$-equivariant diffeomorphisms $f \colon G/T \to G/T$ also define
		$T$-equivariant ones. The former are in one-to-one correspondence with $N_G(T)/T$,
		where this correspondence is given by sending $f$ to its value $w$ on
		$eT$, or, the other way around, sending $w$ to the map $gT\mapsto gwT$, that is,
		right-multiplication with $w$. Such a map $f$ induces a graph automorphism
		$\Phi\colon \Gamma\to \Gamma$ in the following way:
		\begin{enumerate}[label=$(\roman*)$]
			\item A vertex $[w']$ is sent to the vertex $[w'w]$, where we consider $w$ as an element in $W(G)$.
			\item The edge $e$ between $[w']$ and $[w'\sigma_{\alpha}]$ (with label $w'\cdot \alpha$) is sent to the edge
				between $[w'w]$ and $[w'\sigma_{\alpha} w]$ (with label $w'\cdot \alpha$). 
				(Because $w'\sigma_\alpha w = w'ww^{-1}\sigma_\alpha w = w'w \sigma_{w^{-1}\cdot
				\alpha}$, by Theorem \ref{thm:realization} there is such an edge with label
				$w'w\cdot (w^{-1}\cdot \alpha) = w'\cdot \alpha$.) 
			\item Since the diffeomorphism is $T$-equivariant, we may define $\Psi$ to be the identity map. 
		\end{enumerate}
	\end{example}
	\begin{example}\label{Type1}
		We can also consider left-multiplication $L_{w_{0}} \colon G \to G$ with an element
		$w_{0}\in N_G(T)$. The multiplication clearly preserves $N_{G}(T)$, whence it descends
		to an automorphism $W(G) \to W(G)$, as well to an automorphism $G/T \to G/T$ which we
		denote both again by $L_{w_{0}}$. We have the equation
		\[
			L_{w_{0}}(t \cdot gT) = w_{0}tgT =w_{0}tw_{0}^{-1}w_{0}gT =
			c_{w_{0}}(t)L_{w_{0}}(gT),
		\]
		so $L_{w_{0}} \colon G/T \to G/T$ is not $T$--equivariant,
		but rather twisted $T$--equivariant with respect to $c_{w_{0}}$. Therefore, it induces a graph
		automorphism  $L_{w_{0}}:\Gamma\to \Gamma$ by $L_{w_{0}}([w]) = [w_{0}w]$, and
		sending the edge between $[w]$ and $[w\sigma_\alpha]$ labelled $w\cdot \alpha$ to
	the edge between $[w_{0}w]$ and $[w_{0}w\sigma_\alpha]$ labelled $w_{0}w\cdot \alpha$. It becomes
an automorphism of the GKM graph $\Gamma$ with respect to $\Psi(\beta):= \beta\circ
\Ad_{w_{0}}^{-1} = w_{0}\cdot \beta$. Note that $L_{w_{0}} \colon
\Gamma \to \Gamma$ depends only on the class of $w_{0}$ in $W(G)= N_{G}(T)/T$ and
therefore we denote the corresponding automorphism by $L_{[w_{0}]}$.

Observe further that this automorphism respects the signed structure on $\Gamma$ induced by a choice of positive roots $\Delta_+\subset \Delta_G$, cf.\ Section \ref{sec:homspaces} above.
	\end{example}
	There is one more natural example coming from certain automorphisms of $G$.
We remind the reader of the relation between $W(G) = N_{G}(T)/T$ and
	the reflections $\sigma_{\alpha}$. For each root $\alpha$ there is an element
	$w_{\alpha} \in N_{G}(T)$ such that 
	\[
		\Ad_{w_{\alpha}}(h_{\alpha}) = - h_{\alpha}\quad \text{ and } \quad
		\Ad_{w_{\alpha}}(X) = X
	\]
	where $h_{\alpha} \in \mathfrak t$ corresponds to $\alpha$ under the identification of
	$\mathfrak t$ and $\mathfrak t^{\ast}$ by the Killing form and for all $X$ in $\ker
	\alpha$. Thus $\Ad_{w_{\alpha}}^{\ast} = \sigma_{\alpha}$ (see \cite[Theorem 11.35]{MR3331229}).
	\begin{example}\label{ex:Type2}
	For an
automorphism $\psi \colon G \to G$ which sends $T$ to $T$  we have
\[
 \psi(w)\psi(t)\psi(w)^{-1} = \psi(wtw^{-1}) \in T
\]
for $w \in N_{G}(T)$ and all $t \in T$. Hence, $\psi$ induces an automorphism $\psi \colon
W(G) \to W(G)$.
With the above identification of $W(G)$ with a reflection group on $\mathfrak t^{\ast}$ we infer
$\psi(\sigma_{\alpha})=\Ad_{\psi(w_{\alpha})}^{\ast}$ and thus
$\psi(\sigma_{\alpha}) = \sigma_{\alpha \circ d \psi^{-1}}$. Moreover, $\psi$ induces a
diffeomorphism $\psi \colon G/T \to G/T$, by $\psi(gT) =\psi(g)T$.
		 As $\psi(tgT) = \psi(t)\psi(gT)$, this diffeomorphism is twisted
		 equivariant with respect to $\psi|_{T} \colon T\to T$.
  
				We obtain a graph automorphism $\Gamma\to \Gamma$ by declaring
			 $\Phi([w]):=[\psi(w)]$ and sending the edge between $[w]$ and $[w\sigma_\alpha]$
		 labelled $w\cdot \alpha$ to the edge between $[\psi(w)]$ and $[\psi(w\sigma_\alpha)]
	 = [\psi(w)\sigma_{\alpha\circ (d\psi)^{-1}}]$ labelled $\psi(w)\cdot (\alpha\circ
 (d\psi)^{-1})$. Hence $\Phi$ becomes an automorphism of the GKM graph $\Gamma$ with
respect to $\Psi( \beta):=\beta\circ (d\psi)^{-1}$. 		 As $\psi\colon W(G)\to W(G)$ is a
homomorphism, at least $e\in W(G)$ is fixed and so its corresponding vertex.
	\end{example}
	\begin{definition}
		We say that the automorphisms in Example \ref{Type1} are of \textbf{Type 1} and that
		those in Example \ref{ex:Type2} are of \textbf{Type 2}. The sets of each type form a
		group by concatenation. The group of Type 1 automorphisms are isomorphic to $W(G)$ and
		we denote the group of Type 2 automorphisms by $T_{2}(\Gamma)$.
		Having fixed a choice of positive roots $\Delta_+\subset \Delta_G$,
		we denote by $T_2^+(\Gamma)\subset T_2(\Gamma)$ the subgroup of those Type 2
	automorphism respecting the corresponding signed structure, i.e.\
	the equation $\alpha(\Phi(e))=\Psi(\alpha(e))$ from Definition \ref{defn:gkmgraphiso}
holds with signs.
	\end{definition}
	\begin{rem}
		At first glance, it might seem odd that right-multiplication featured in \ref{rightmult} is not relevant to us here. However, since multiplication with $w\in N_G(T)$
		from the right can be written as composition of left-multiplication with $w$ (which is of Type 1) and conjugation with $w^{-1}$ (which is of Type 2),
		right-multiplication is already covered by the composition of automorphisms of Type 1 and 2.
	\end{rem}
	The main result of this section is the statement that all graph automorphisms come from the examples mentioned before.
	But first, we need three lemmata, the second of which is standard and proven for
	completeness. An abstract GKM graph is called effective if the edge
	weights at any vertex span all of $\mathfrak{t}^*$.
	\begin{lem}\label{AutoRigidity}
		Any graph automorphism $\Phi\colon \Gamma\rightarrow \Gamma$, $\Psi\colon
		\Z^n\rightarrow \Z^n$ of any effective abstract GKM graph
		$(\Gamma,\alpha)$ which preserves a compatible
	connection $\nabla$ is uniquely determined by its value on one vertex $v$ and all edges emerging
	from it.
	\end{lem}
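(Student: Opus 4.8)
The plan is to leverage connectedness of $\Gamma$ together with the fixed compatible connection $\nabla$, which lets one propagate the ``germ'' of an automorphism --- its value on a vertex and on all edges at that vertex --- to the adjacent vertices. So fix $\nabla$ and suppose $(\Phi,\Psi)$ and $(\Phi',\Psi')$ are two automorphisms of $(\Gamma,\alpha)$, each preserving $\nabla$, with $\Phi(v)=\Phi'(v)$ and $\Phi(e)=\Phi'(e)$ for every $e\in E(\Gamma)_v$; I want to conclude $\Phi=\Phi'$ (and that $\Psi$ is then forced, up to the unavoidable ambiguity of an unsigned axial function).

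I would prove $\Phi=\Phi'$ by induction on the graph distance $d(v,u)$, using the hypothesis $H(k)$: for every vertex $u$ with $d(v,u)\le k$ one has $\Phi(u)=\Phi'(u)$, and moreover $\Phi(e)=\Phi'(e)$ for all $e\in E(\Gamma)_u$. The base case $H(0)$ is precisely the assumption. For the step $H(k)\Rightarrow H(k+1)$, take $u$ with $d(v,u)=k+1$, choose a neighbour $u'$ with $d(v,u')=k$ and an edge $e$ with $i(e)=u'$, $t(e)=u$. By $H(k)$ we have $\Phi(e)=\Phi'(e)$, hence $\Phi(u)=t(\Phi(e))=t(\Phi'(e))=\Phi'(u)$. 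Given any $f\in E(\Gamma)_u$, the bijection $\nabla_e\colon E(\Gamma)_{u'}\to E(\Gamma)_u$ (with inverse $\nabla_{\bar e}$) lets us write $f=\nabla_e g$ for a unique $g\in E(\Gamma)_{u'}$; since both automorphisms preserve $\nabla$ and, by $H(k)$, agree on the edges $e,g\in E(\Gamma)_{u'}$, we obtain
\[
\Phi(f)=\nabla_{\Phi(e)}\Phi(g)=\nabla_{\Phi'(e)}\Phi'(g)=\Phi'(f).
\]
This is $H(k+1)$, and connectedness of $\Gamma$ then yields $\Phi=\Phi'$ on all of $V(\Gamma)\cup E(\Gamma)$.

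For the linear part, once $\Phi=\Phi'$ is established, the defining relation of Definition~\ref{defn:gkmgraphiso} gives $\Psi(\alpha(e))=\alpha(\Phi(e))=\Psi'(\alpha(e))$ in $\mathbb{Z}^n/\pm1$ for every $e\in E(\Gamma)_v$. Since $\Gamma$ is effective these weights span $\mathfrak{t}^*$, so after fixing lifts they contain a $\mathbb{Q}$-basis, and a $\mathbb{Z}$-linear map is determined by its values on such a set; hence $\Psi$ and $\Psi'$ coincide up to the intrinsic $\pm1$-ambiguity of an unsigned labeling (which disappears as soon as one works with a signed structure, as in parts of this paper).

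The step I expect to be the only real subtlety is setting up the induction hypothesis correctly: it must carry the values of $\Phi$ on \emph{all} edges at the already-treated vertices, not merely on the vertices themselves, since it is precisely these edges through which $\nabla$ transports the germ one step outward. Once that is in place the argument is entirely formal, the remaining bookkeeping being only a consistent choice of lifts of the weights at $v$ when comparing $\Psi$ with $\Psi'$.
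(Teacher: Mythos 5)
Your argument is correct and is essentially the paper's proof: both propagate the germ of the automorphism outward from $v$ along the connection $\nabla$ and invoke connectedness; you have merely made the induction on graph distance explicit, where the paper states the one-step propagation and says ``done by connectedness.'' Your extra caveat about the $\pm1$-ambiguity in $\Psi$ for an unsigned axial function is a fair observation that the paper glosses over, but it does not affect the use the paper makes of the lemma (comparison of automorphisms whose linear parts are already specified).
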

	\begin{proof}
		It is clear that the linear map $\psi_*\colon \mft^* \to \mft^*$ is uniquely determined, as the labels on the edges at a vertex span $\mft^*$.
We fix a connection $\nabla$ which is preserved by $\psi$. Let $v'$ be a vertex connected by the edge $e$ to $v$. By definition, $\psi$ maps $v'$ to the vertex $\psi(v')$ connected by $\psi(e)$ to $\psi(v)$.
		Let $e'\neq e$ be another edge emerging from $v'$. We may write it as $\nabla_e
		\hat{e}$, where $\hat{e}$ is some edge emerging from $v$. Since $\psi$	preserves
		$\nabla$, the edge $\psi(e')$ is determined by $\psi(e)$ and $\psi(\hat{e})$. Since
		$\Gamma$ was assumed to be connected, we are done.
	\end{proof}

	\begin{lem}\label{LieAlgAuto}
		Let $\mathfrak{g}$ be the Lie algebra of a compact semisimple Lie group, $\mathfrak{t}$ a maximal abelian subalgebra and
		$\phi\colon \mathfrak{t} \to \mathfrak{t}$ a linear automorphism such that $\phi^*:\mathfrak{t}^*\to \mathfrak{t}^*$ permutes the roots of $\mathfrak{g}$.  Then $\phi$ extends to an automorphism of $\mathfrak{g}$.
	\end{lem}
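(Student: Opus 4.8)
The plan is to reduce the statement to the classical structure theory of root systems. Write $\Delta = \Delta_G \subset \mathfrak{t}^*$ for the root system of $\mathfrak{g}$. I would show that $\phi^*$ belongs to the automorphism group $\Aut(\Delta)$ of this root system, decompose $\Aut(\Delta)$ as ``Weyl group times diagram automorphisms'', and lift each of the two factors to an automorphism of $\mathfrak{g}$ preserving $\mathfrak{t}$. Since $\mathfrak{g}$ is semisimple, $\Delta$ spans $\mathfrak{t}^*$; hence an automorphism of $\mathfrak{g}$ that preserves $\mathfrak{t}$ is determined on $\mathfrak{t}$ by the permutation it induces on $\Delta$, so composing the two lifts will give back an automorphism of $\mathfrak{g}$ that restricts to $\phi$ on the nose.

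For the first point, note that a linear bijection of $\mathfrak{t}^*$ mapping $\Delta$ onto $\Delta$ automatically preserves all Cartan integers $\langle \alpha, \beta^\vee\rangle$: this integer can be recovered from the $\beta$-string through $\alpha$, a purely linear-combinatorial datum that $\phi^*$ carries bijectively to the $\phi^*\beta$-string through $\phi^*\alpha$. Thus $\phi^* \in \Aut(\Delta)$. Now fix a system of positive roots $\Delta_+$ with simple system $S$. Then $\phi^*(\Delta_+)$ is again a system of positive roots, and since $W = W(G)$ acts simply transitively on these, there is a unique $w \in W$ with $w\bigl(\phi^*(\Delta_+)\bigr) = \Delta_+$. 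The composite $\gamma := w \circ \phi^*$ preserves $\Delta_+$, hence permutes $S$, and being a root-system automorphism it respects the Cartan matrix; so $\gamma$ is a Dynkin diagram automorphism and $\phi^* = w^{-1} \circ \gamma$ in $\Aut(\Delta)$.

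It remains to realise $w^{-1}$ and $\gamma$ by automorphisms of $\mathfrak{g}$ fixing $\mathfrak{t}$. For $w^{-1}$ this is immediate from the relation between $W(G) = N_G(T)/T$ and the reflections recalled before Example \ref{ex:Type2}: every element of $W(G)$ acts on $\mathfrak{t}^*$ as $\Ad_g^*$ for some $g \in N_G(T)$, and $\Ad_g$ is an automorphism of $\mathfrak{g}$ preserving $\mathfrak{t}$. For $\gamma$ I would pass to the complexification $\mathfrak{g}^{\CC}$ with $\mathfrak{h} := \mathfrak{t}^{\CC}$ and use the Serre presentation: choose Chevalley generators $e_\alpha, f_\alpha, h_\alpha$ ($\alpha \in S$) adapted to $\mathfrak{g}$, meaning that the conjugation $\omega$ of $\mathfrak{g}^{\CC}$ with respect to the real form $\mathfrak{g}$ satisfies $\omega(e_\alpha) = -f_\alpha$, $\omega(f_\alpha) = -e_\alpha$, $\omega(h_\alpha) = -h_\alpha$ (such a choice exists by the standard construction of a compact real form from a Chevalley basis, applied to $\mathfrak{g}$). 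Since $\gamma$ permutes $S$ compatibly with the Cartan matrix, the assignment $e_\alpha \mapsto e_{\gamma(\alpha)}$, $f_\alpha \mapsto f_{\gamma(\alpha)}$, $h_\alpha \mapsto h_{\gamma(\alpha)}$ preserves the Serre relations and extends to a $\CC$-linear automorphism $\Psi^{\CC}$ of $\mathfrak{g}^{\CC}$; checking on generators shows $\Psi^{\CC}$ commutes with $\omega$, so it preserves $\mathfrak{g} = (\mathfrak{g}^{\CC})^\omega$, restricting to an automorphism of $\mathfrak{g}$ that fixes $\mathfrak{t} = \mathfrak{g} \cap \mathfrak{h}$ and whose induced map on $\mathfrak{t}^*$ is $\gamma$ (up to replacing $\gamma$ by $\gamma^{-1}$ in the recipe, which is harmless, as one sees from $\langle \gamma\alpha, (\gamma\beta)^\vee\rangle = \langle \alpha, \beta^\vee\rangle$). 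Composing this lift with $\Ad_g$ yields the desired automorphism of $\mathfrak{g}$ restricting to $\phi$.

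The formal part --- splitting $\phi^*$ as a Weyl element times a diagram automorphism, and realising the Weyl part via $N_G(T)$ --- is routine. The real content, and the step I expect to need the most care, is the lift of the diagram automorphism: one must produce the lift inside $\Aut(\mathfrak{g})$ (not merely $\Aut(\mathfrak{g}^{\CC})$) and make sure that it restricts on $\mathfrak{t}$ to the prescribed map $\phi$ rather than to some other representative of the same diagram symmetry. Adapting the Chevalley basis to the given compact form $\mathfrak{g}$ is precisely what arranges both of these at once.
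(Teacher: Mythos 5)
Your proof is correct, but it takes a genuinely different route from the paper's. The paper extends $\phi$ directly to an automorphism of $\mathfrak{g}^{\CC}$ via the isomorphism theorem for complex semisimple Lie algebras (\cite[Theorem 2.108]{MR1920389}), choosing Chevalley-type root vectors with Knapp's normalization $N_{-\alpha,-\beta}=-N_{\alpha,\beta}$, and then verifies that the compact real form is preserved by an explicit structure-constant computation: writing each positive root as a sum of simple roots and tracking the products of $N$'s shows that $X_\alpha$ and $X_{-\alpha}$ are rescaled by the same constant, so the generators $X_\alpha - X_{-\alpha}$ and $i(X_\alpha + X_{-\alpha})$ of the compact form are preserved up to real scalars. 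You instead pre-factor $\phi^* = w^{-1}\circ\gamma$ into a Weyl element and a Dynkin-diagram automorphism; the Weyl part lifts trivially as $\Ad$ of an element of $N_G(T)$ (so preserving $\mathfrak{g}$ is automatic), and for the diagram part you replace the paper's coefficient chase by the softer observation that the Serre-presentation lift commutes with the conjugation $\omega$ defining the compact form. What your approach buys is exactly the avoidance of the constant computation, at the cost of invoking the decomposition $\Aut(\Delta)\cong W\rtimes\Aut(\text{Dynkin})$ up front; it is also pleasantly aligned with the Type~1/Type~2 split that the paper establishes later in Theorem~\ref{thm:type decomposition}. Your $\gamma$ versus $\gamma^{-1}$ caveat is a real but harmless sign issue, as you say, and your opening remark that the roots span $\mathfrak{t}^*$ (so the restriction to $\mathfrak{t}$ is pinned down by the permutation of $\Delta$) correctly disposes of the worry that the lift might restrict to the ``wrong'' representative of $\phi$.
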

	\begin{proof}
		Consider the complexification $\mathfrak{g}^\CC$ of $\mathfrak{g}$ as well as its root space decomposition
		\[
		\mathfrak{g}^\CC = \mathfrak{t}^\CC \oplus \bigoplus_{\alpha\in \Delta} \mathfrak{g}_\alpha
		\]
		with respect to the Cartan subalgebra $\mathfrak{t}^\CC$. Moreover, by 
		\cite[Theorem VI.6.6]{MR1920389} we may choose, for each $\alpha\in \Delta$, a root vector $X_\alpha\in \mathfrak{g}_\alpha$ such that for all $\alpha,\beta\in \Delta$
		\begin{enumerate}
			\item $[X_\alpha,X_{-\alpha}] = H_\alpha$ 
			\item $[X_\alpha,X_\beta] = N_{\alpha,\beta} X_{\alpha+\beta}$ if $\alpha+\beta\in \Delta$
			\item $[X_\alpha,X_\beta]=0$ if $\alpha+\beta\neq 0$ and $\alpha+\beta\notin \Delta$.
		\end{enumerate}
		for certain specific constants $N_{\alpha,\beta}$ satisfying $N_{-\alpha,-\beta} =
		-N_{\alpha,\beta}$. Here, $H_\alpha$ is the unique element in the Cartan subalgebra
		such that $\alpha(H)=B(H,H_\alpha)$ for all $H\in \mathfrak{t}^\CC$, where $B$ is the
		Killing form of $\mathfrak{g}^\CC$. By \cite[Theorem VI.6.11]{MR1920389}, in particular
		Equation (6.12) in the proof of this theorem, we may assume that the compact real form
		$\mathfrak{g}$ of $\mathfrak{g}^\CC$ is given by
		\[
		\mathfrak{g} = \mathfrak{t} \oplus \bigoplus_{\alpha\in \Delta_+} \RR(X_\alpha - X_{-\alpha}) \oplus \bigoplus_{\alpha\in \Delta_+} \RR i (X_\alpha + X_{-\alpha}),
		\]
		where $\Delta_+\subset \Delta$ is a choice of positive roots. 
		
		Denote by $\Pi\subset \Delta_+$ the corresponding set of simple roots. By 
		\cite[Theorem 2.108]{MR1920389}, the isomorphism $\phi$ extends uniquely to a Lie algebra automorphism $\phi:\mathfrak{g}^\CC\to \mathfrak{g}^\CC$ satisfying $\phi(X_{\alpha}) = X_{\alpha'}$ for all $\alpha\in \Pi$, where $\alpha' = (\phi^*)^{-1}(\alpha)$. As the Killing form is invariant under any Lie algebra automorphism, it follows that $\phi(H_\alpha) = H_{\alpha'}$ for all $\alpha\in \Delta$. For all $\alpha\in \Pi$, condition 1.\ above implies that \[
		H_{\alpha'} = \phi(H_\alpha) = \phi([X_\alpha,X_{-\alpha}]) = [X_{\alpha'},\phi(X_{-\alpha})],
		\]
		and hence, 
		$\phi(X_{-\alpha}) = X_{-\alpha'}$ for all simple $\alpha$.
		
		We claim that this automorphism $\phi$ automatically sends the compact real form $\mathfrak{g}$ to itself. To this end, take any $\alpha\in \Delta_+$ and write $\alpha = \alpha_1+\ldots + \alpha_k$ as a sum of simple roots (not necessarily distinct). We have
		\[
		[X_{\alpha_1},[X_{\alpha_2},\ldots,[X_{\alpha_{k-1}},X_{\alpha_k}]\ldots]] = N_{\alpha_1,\alpha_2 + \ldots + \alpha_k}\cdot\ldots\cdot N_{\alpha_{k-1},\alpha_k} X_{\alpha},
		\]
		where the coefficient of $X_\alpha$ does not vanish. Thus,
		\begin{align*}
			\phi(X_\alpha) &= \frac{1}{N_{\alpha_1,\alpha_2 + \ldots + \alpha_k}\cdot\ldots\cdot N_{\alpha_{k-1},\alpha_k}} [X_{\alpha'_1},[X_{\alpha'_2},\ldots,[X_{\alpha'_{k-1}},X_{\alpha'_k}]\ldots]]\\
			&=\frac{N_{\alpha'_1,\alpha'_2 + \ldots + \alpha'_k}\cdot\ldots\cdot N_{\alpha'_{k-1},\alpha'_k}}{N_{\alpha_1,\alpha_2 + \ldots + \alpha_k}\cdot\ldots\cdot N_{\alpha_{k-1},\alpha_k}} X_{\alpha'_1+\ldots + \alpha'_k}\\
			&= \frac{N_{\alpha'_1,\alpha'_2 + \ldots + \alpha'_k}\cdot\ldots\cdot N_{\alpha'_{k-1},\alpha'_k}}{N_{\alpha_1,\alpha_2 + \ldots + \alpha_k}\cdot\ldots\cdot N_{\alpha_{k-1},\alpha_k}} X_{\alpha'}.
		\end{align*}
		Analogously, we have
		\[
		[X_{-\alpha_1},[X_{-\alpha_2},\ldots,[X_{-\alpha_{k-1}},-X_{\alpha_k}]\ldots]] = N_{-\alpha_1,-\alpha_2 - \ldots - \alpha_k}\cdot\ldots\cdot N_{-\alpha_{k-1},-\alpha_k} X_{-\alpha},
		\]
		and because we showed above that $\phi(X_{-\alpha_i}) = X_{-\alpha'_i}$, we may compute analogously to above
		\begin{align*}
			\phi(X_{-\alpha}) &= \frac{N_{-\alpha'_1,-\alpha'_2 - \ldots -\alpha'_k}\cdot\ldots\cdot N_{-\alpha'_{k-1},-\alpha'_k}}{N_{-\alpha_1,-\alpha_2 - \ldots - \alpha_k}\cdot\ldots\cdot N_{-\alpha_{k-1},-\alpha_k}} X_{-\alpha'}\\
			&=\frac{N_{\alpha'_1,\alpha'_2 + \ldots + \alpha'_k}\cdot\ldots\cdot N_{\alpha'_{k-1},\alpha'_k}}{N_{\alpha_1,\alpha_2 + \ldots + \alpha_k}\cdot\ldots\cdot N_{\alpha_{k-1},\alpha_k}} X_{-\alpha'}.
		\end{align*}
		This implies that $\phi(\mathfrak{g})$ is contained in, and hence equal to $\mathfrak{g}$.
	\end{proof}

\begin{lem}\label{lem: type 1 and 2 preserve can connection}
Automorphisms of Type 1 and 2 preserve the canonical connection on the GKM graph $\Gamma$ of $G/T$, see Theorem
\ref{thm:GHZ}.\end{lem}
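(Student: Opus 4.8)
The plan is to unwind the statement ``$\Phi$ preserves the canonical connection $\nabla$'' into the requirement that $\Phi(\nabla_e f) = \nabla_{\Phi(e)}\Phi(f)$ for every vertex $[w] \in V(\Gamma)$ and every pair $e, f \in E(\Gamma)_{[w]}$, and then to check this by a direct computation that plays the explicit formula for $\nabla$ from Theorem \ref{thm:GHZ}(c) against the explicit descriptions of the two types of automorphisms in Examples \ref{Type1} and \ref{ex:Type2}. Concretely, I would fix $[w]$, write $e$ for the edge from $[w]$ to $[w\sigma_\alpha]$ and $f$ for the edge from $[w]$ to $[w\sigma_\beta]$, so that $\nabla_e f$ is the edge joining $[w\sigma_\alpha]$ and $[w\sigma_\alpha\sigma_\beta]$ with label $(w\sigma_\alpha)\cdot\beta$, and then evaluate $\Phi(\nabla_e f)$ and $\nabla_{\Phi(e)}\Phi(f)$ separately and compare.

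For a Type 1 automorphism $\Phi = L_{[w_0]}$, the observation to make is that $\Phi(e)$ is again an edge ``in the $\alpha$-direction'', now based at $[w_0 w]$ and terminating at $[(w_0 w)\sigma_\alpha]$, and likewise $\Phi(f)$ points in the $\beta$-direction at $[w_0 w]$. Applying the connection formula at $[w_0 w]$ then gives $\nabla_{\Phi(e)}\Phi(f)$ as the edge joining $[(w_0 w)\sigma_\alpha]$ and $[(w_0 w)\sigma_\alpha\sigma_\beta]$ with label $((w_0 w)\sigma_\alpha)\cdot\beta$; on the other hand $\nabla_e f$, viewed as an edge at $[w\sigma_\alpha]$, points in the $\beta$-direction, so $\Phi$ sends it to the edge joining $[w_0(w\sigma_\alpha)]$ and $[w_0(w\sigma_\alpha)\sigma_\beta]$ with label $(w_0 w\sigma_\alpha)\cdot\beta$, and the identity $w_0(w\sigma_\alpha) = (w_0 w)\sigma_\alpha$ in $W(G)$ matches the two. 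For a Type 2 automorphism $\Phi$ induced by $\psi\colon G\to G$ with $\psi(T)=T$, I would set $\alpha' := \alpha\circ(d\psi)^{-1}$ and $\beta' := \beta\circ(d\psi)^{-1}$ and invoke the two facts recorded in Example \ref{ex:Type2}: that $\psi(\sigma_\alpha) = \sigma_{\alpha'}$, and that $\psi\colon W(G)\to W(G)$ is a homomorphism, whence $\psi(w\sigma_\alpha) = \psi(w)\sigma_{\alpha'}$. This identifies $\Phi(e)$ and $\Phi(f)$ as the edges at $[\psi(w)]$ in the $\alpha'$- and $\beta'$-directions, so $\nabla_{\Phi(e)}\Phi(f)$ is the edge joining $[\psi(w)\sigma_{\alpha'}]$ and $[\psi(w)\sigma_{\alpha'}\sigma_{\beta'}]$ with label $(\psi(w)\sigma_{\alpha'})\cdot\beta'$; and applying $\Phi$ directly to $\nabla_e f$ (the $\beta$-direction edge at $[w\sigma_\alpha]$) yields the edge joining $[\psi(w\sigma_\alpha)] = [\psi(w)\sigma_{\alpha'}]$ and $[\psi(w\sigma_\alpha)\sigma_{\beta'}] = [\psi(w)\sigma_{\alpha'}\sigma_{\beta'}]$ with label $\psi(w\sigma_\alpha)\cdot\beta' = (\psi(w)\sigma_{\alpha'})\cdot\beta'$. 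The two expressions again coincide.

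I do not expect a genuine obstacle here; the only thing requiring care is the bookkeeping --- reading off the ``direction'' (the root $\alpha$ or $\beta$) of a given edge depending on which endpoint one regards it as based at, and keeping labels expressed consistently, which is painless since labels live in $\ZZ^m/\pm 1$ and so sign choices are irrelevant. Since by Lemma \ref{AutoRigidity} an automorphism preserving a compatible connection is already determined by its restriction to the edges at a single vertex, one could alternatively localize the argument; but the computation above is uniform over all vertices, so no such reduction is needed.
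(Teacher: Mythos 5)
Your proposal is correct and follows essentially the same route as the paper's proof: a direct verification of $\Phi(\nabla_e f) = \nabla_{\Phi(e)}\Phi(f)$ by unwinding the explicit formula for the canonical connection from Theorem~\ref{thm:GHZ}(c) against the descriptions of Type~1 and Type~2 automorphisms from Examples~\ref{Type1} and~\ref{ex:Type2}. The only cosmetic difference is that you are slightly more explicit about the bookkeeping (the ``$\alpha$-direction'' reading of edges and the identities $w_0(w\sigma_\alpha)=(w_0w)\sigma_\alpha$ and $\psi(w\sigma_\alpha)=\psi(w)\sigma_{\alpha'}$), which the paper leaves implicit.
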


\begin{proof}
We denote by $e$ the edge joining $[w]$ and $[w \sigma_{\alpha}]$, with label $w\cdot \alpha$, by $f$ the edge
	joining $[w]$ and $[w \sigma_{\beta}]$, with label $w\cdot\beta$. Then by the definition  in Theorem \ref{thm:GHZ},
	$\nabla_{e} f$ is the edge joining $[w \sigma_{\alpha}]$ and $[w \sigma_{\alpha}
	\sigma_{\beta}]$, with label $w\sigma_\alpha\cdot \beta$.

 Let $\Phi$ be a Type 1 automorphism which is induced by left multiplication by $w_{0} \in
 W(G)$. Then $\Phi(e)$ connects $[w_{0}w]$ and $[w_{0}w\sigma_{\alpha}]$, $\Phi(f)$ connects
 $[w_{0}w]$ and $[w_{0}w \sigma_{\beta}]$, and therefore $\nabla_{\Phi(e)}\Phi(f)$ joins
$[w_{0}w\sigma_\alpha]$ and $[w_{0}w \sigma_{\alpha}\sigma_{\beta}]$. Clearly this is equal to
$\Phi(\nabla_{e}f)$.

For Type 2 we have that $\Phi(e)$ joins $[\psi(w)]$ and $[\psi(w) \sigma_{\alpha \circ
	d \psi^{-1}}]$ and $\Phi(f)$ joins $[\psi(w)]$ and $[\psi(w) \sigma_{\beta \circ d
\psi^{-1}}]$ (we use the notation of Example \ref{ex:Type2}). Thus, $\nabla_{\Phi(e)} \Phi(f)$ joins $[\psi(w) \sigma_{\alpha\circ d\psi^{-1}}]$ and
$[\psi(w) \sigma_{\alpha \circ d \psi^{-1}} \sigma_{\beta \circ d \psi^{-1}}]$, the same as $\Phi(\nabla_{e} f)$.
\end{proof}

Having fixed a signed structure through a choice of positive roots $\Delta_{+} \subset
\Delta_{G}$ (see the last paragraph of Section \ref{sec:homspaces})
by definition of Type 2 automorphisms we have:
\begin{lemma}\label{lem:type2signed}
The Type 2 automorphism associated to an automorphism $\psi:G\to G$ with $\psi(T)=T$ respects the signed structure on $\Gamma$ if and only if $\psi:\mft^\CC\to \mft^\CC$ permutes $\Delta_+$.
\end{lemma}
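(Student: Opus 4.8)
The plan is to unwind the definition of the Type 2 automorphism and compare, oriented edge by oriented edge, the signed label of $\Phi(e)$ with the $\Psi$-image of the signed label of $e$. Recall from Example \ref{ex:Type2} that the Type 2 automorphism $\Phi$ attached to $\psi$ has associated linear map $\Psi(\beta)=\beta\circ(d\psi)^{-1}$, and sends the edge joining $[w]$ and $[w\sigma_\alpha]$ to the edge joining $[\psi(w)]$ and $[\psi(w\sigma_\alpha)]=[\psi(w)\sigma_{\alpha'}]$, where $\alpha':=\alpha\circ(d\psi)^{-1}$; this $\alpha'$ is again a root since $\psi$ permutes $\Delta$. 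Recall further from the last paragraph of Section \ref{sec:homspaces} that in the signed structure the oriented edge $e$ running from $[w]$ to $[w\sigma_\alpha]$ with $\alpha\in\Delta_+$ carries the label $\alpha(e)=w\cdot\alpha$. Respecting the signed structure means precisely that $\alpha(\Phi(e))=\Psi(\alpha(e))$ holds with the $+$ sign for every oriented edge.

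The first step I would carry out is the identity $\psi(w)\cdot\alpha'=\Psi(w\cdot\alpha)$, valid for every $w\in W(G)$ and every root $\alpha$ (positive or not). Writing $w=gT$, this follows from the conjugation relation $\Ad_{\psi(g)}=d\psi\circ\Ad_g\circ(d\psi)^{-1}$ — obtained by differentiating $\psi\circ c_g=c_{\psi(g)}\circ\psi$ — together with $w\cdot\beta=\beta\circ\Ad_g^{-1}$ and $\alpha'\circ d\psi=\alpha$; it is just a composition of linear maps and requires no real computation. Granting this, I read off the signed label of $\Phi(e)$, which is the oriented edge from $[\psi(w)]$ to $[\psi(w)\sigma_{\alpha'}]$. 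If $\alpha'\in\Delta_+$, this is already in the standard form $[v]\to[v\sigma_\gamma]$ with $\gamma=\alpha'$ positive, so its signed label is $\psi(w)\cdot\alpha'=\Psi(\alpha(e))$. If instead $\alpha'\in\Delta_-$, then $\sigma_{\alpha'}=\sigma_{-\alpha'}$ with $-\alpha'\in\Delta_+$, so the standard form is $[\psi(w)]\to[\psi(w)\sigma_{-\alpha'}]$ and the signed label is $\psi(w)\cdot(-\alpha')=-\Psi(\alpha(e))$.

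From this the lemma drops out. The sign of $\alpha(\Phi(e))$ relative to $\Psi(\alpha(e))$ is $+$ for every edge exactly when the second case never occurs, i.e.\ when $\alpha\circ(d\psi)^{-1}\in\Delta_+$ for every $\alpha\in\Delta_+$; note that this condition is manifestly independent of the vertex $[w]$ and of the chosen positive representative, precisely because the identity $\psi(w)\cdot\alpha'=\Psi(w\cdot\alpha)$ is uniform in $w$, so it suffices to test one edge at the identity vertex per positive root. Since $\beta\mapsto\beta\circ(d\psi)^{-1}$ is a bijection of the finite set $\Delta$, mapping $\Delta_+$ into $\Delta_+$ is the same as permuting $\Delta_+$; and under the Killing-form identification $\mathfrak{t}^{\CC}\cong(\mathfrak{t}^{\CC})^*$ used throughout the paper (under which $d\psi$ corresponds to $\beta\mapsto\beta\circ(d\psi)^{-1}$, because the automorphism $\psi$ of $\mathfrak{g}$ preserves the Killing form), this is exactly the statement that $\psi\colon\mathfrak{t}^{\CC}\to\mathfrak{t}^{\CC}$ permutes $\Delta_+$.

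There is no substantial obstacle here; the proof is a careful bookkeeping of orientations and signs, and the only point needing a moment's reflection is that "respecting the signed structure'' is a priori a condition on all edges, whereas the answer depends only on the single permutation of $\Delta$ induced by $\psi$ — which is exactly what the uniform identity $\psi(w)\cdot\alpha'=\Psi(w\cdot\alpha)$ makes transparent.
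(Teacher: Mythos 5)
Your proof is correct and matches what the paper regards as immediate: the paper states this lemma without proof (prefacing it with ``by definition of Type 2 automorphisms we have''), and your careful unwinding---computing the signed label of $\Phi(e)$ for an oriented edge from $[w]$ to $[w\sigma_\alpha]$ with $\alpha\in\Delta_+$, establishing the uniform identity $\psi(w)\cdot\alpha'=\Psi(w\cdot\alpha)$ via $\Ad_{\psi(g)}=d\psi\circ\Ad_g\circ(d\psi)^{-1}$, observing the sign flip exactly when $\alpha'=\alpha\circ(d\psi)^{-1}\in\Delta_-$, and translating ``maps $\Delta_+$ into $\Delta_+$'' to ``permutes $\Delta_+$'' via finiteness and to the statement about $\psi$ on $\mft^\CC$ via Killing-form duality---is precisely the bookkeeping the paper leaves to the reader.
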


Recall that an inner automorphism of $G$ is by definition conjugation with an element of
$G$, and that the group of outer automorphisms of $G$ is defined as
${\mathrm{Out}}(G):=\Aut(G)/{\mathrm{Inn}}(G)$, the quotient of all automorphisms by the
inner automorphism group. We denote by $\Pi\subset \Delta_+$ the set of simple roots. Recall that as any
two maximal tori in $G$ are conjugate via $G$, and the Weyl group acts simply transitively
on the possible choices of positive roots, any automorphism of $G$ induces a well-defined
permutation of $\Pi$, i.e., an automorphism of the Dynkin diagram of $G$. Conversely, any
automorphism of the Dynkin diagram induces an automorphism $\phi:G\to G$ with $\phi(T)=T$,
such that $\phi:\mft\to \mft$ permutes $\Delta_+$; also this is well-known, but
follows also from Lemma \ref{LieAlgAuto} above. 
Thus, we have constructed a natural isomorphism $T_2^+(\Gamma)\cong {\mathrm{Out}}(G)$.

\begin{theorem}\label{thm:type decomposition}
Let $\mathrm{Aut}_{\nabla}(\Gamma)$ denote the group of automorphisms of $\Gamma$ which
preserve the canonical connection of $G/T$. We define an action of $T_{2}(\Gamma)$ on
$W(G)$ by
\[
	\rho \colon T_{2}(\Gamma) \longrightarrow \mathrm{Aut}(W(G)), \quad \rho(\Phi)[w] := \Phi[w].
\]
The map
\[
	\eta \colon W(G) \ltimes_{\rho} T_{2}(\Gamma) \longrightarrow \mathrm{Aut}_{\nabla}(\Gamma), \quad
	([w], \Phi) \longmapsto L_{[w]} \circ \Phi
\]
is an isomorphism, where the group on the left is the semidirect product induced by
$\rho$. 

Fixing a choice of positive roots $\Delta_+\subset \Delta_G$, and denoting by $\Aut_\nabla^+(\Gamma)$ the group of automorphisms of $\Gamma$ fixing the canonical connection of $G/T$ and the associated signed structure, then by restriction of $\eta$ we obtain an isomorphism $W(G)\ltimes_\rho T_2^+(\Gamma)\cong \Aut_\nabla^+(\Gamma)$.
\end{theorem}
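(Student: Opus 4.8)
The plan is to verify in turn that $\eta$ is a group homomorphism, that it is injective, and that it is surjective, the last point being the substantive one; the signed statement will then follow quickly. For the homomorphism property, the key is the commutation relation $\Phi\circ L_{[w]} = L_{\rho(\Phi)[w]}\circ\Phi$ for $\Phi\in T_2(\Gamma)$ and $[w]\in W(G)$. On vertices this follows from the fact that a Type $2$ automorphism acts on $W(G)=V(\Gamma)$ by a group automorphism, so $\Phi[ww']=(\Phi[w])(\Phi[w'])$; on edge labels it reduces to the identity $\Ad_{\psi(g)}=d\psi\circ\Ad_g\circ(d\psi)^{-1}$ on $\mathfrak{t}$, valid since $\psi$ is a group automorphism with $d\psi(\mathfrak{t})=\mathfrak{t}$. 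Granting this, one computes directly that $\eta([w_1],\Phi_1)\circ\eta([w_2],\Phi_2)=L_{[w_1]\cdot\rho(\Phi_1)[w_2]}\circ(\Phi_1\circ\Phi_2)=\eta\big(([w_1],\Phi_1)\cdot([w_2],\Phi_2)\big)$. Injectivity is easy: if $L_{[w]}\circ\Phi=\Id$, then since a Type $2$ automorphism fixes the vertex $[e]$ (Example \ref{ex:Type2}), evaluating at $[e]$ gives $[w]=L_{[w]}[e]=[e]$, hence $w=e$, $L_{[w]}=\Id$, and so $\Phi=\Id$.

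For surjectivity, let $\Phi\in\mathrm{Aut}_\nabla(\Gamma)$, and set $[w_0]:=\Phi[e]$. Since $L_{[w_0]}$, and hence $L_{[w_0]}^{-1}$, preserves the canonical connection (Lemma \ref{lem: type 1 and 2 preserve can connection}), the automorphism $\Phi':=L_{[w_0]}^{-1}\circ\Phi$ lies in $\mathrm{Aut}_\nabla(\Gamma)$ and fixes $[e]$; it suffices to show that $\Phi'$ is of Type $2$, for then $\Phi=L_{[w_0]}\circ\Phi'$ is in the image of $\eta$. So assume henceforth $\Phi[e]=[e]$, and let $\Psi\colon\mathfrak{t}^*\to\mathfrak{t}^*$ be the associated linear automorphism — uniquely determined since $\Gamma$ is effective (the roots span $\mathfrak{t}^*$ because $\mathfrak{g}$ is semisimple). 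By Theorem \ref{thm:GHZ} the edges issuing from $[e]$ carry exactly the labels $\Delta_G/\pm 1$, so $\Psi$ permutes $\Delta_G/\pm 1$, and since $\Delta_G=-\Delta_G$ it permutes $\Delta_G$ as well; the same holds for $\Psi^{-1}$. Thus $\phi:=(\Psi^*)^{-1}\colon\mathfrak{t}\to\mathfrak{t}$ satisfies $\phi^*=\Psi^{-1}$, which permutes the roots, so Lemma \ref{LieAlgAuto} extends $\phi$ to an automorphism of $\mathfrak{g}$; this extension preserves $\mathfrak{t}$, as it restricts to $\phi$ there. Since $G$ is simply connected it integrates to an automorphism $\psi\colon G\to G$ with $d\psi=\phi$, and $\psi(T)=T$ because $d\psi(\mathfrak{t})=\mathfrak{t}$ and $T$ is connected.

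Let $\Phi_\psi$ be the Type $2$ automorphism attached to $\psi$. Its associated linear map sends $\beta\mapsto\beta\circ(d\psi)^{-1}=(\phi^*)^{-1}(\beta)=\Psi(\beta)$, so it coincides with the linear map of $\Phi$; moreover $\Phi_\psi$ fixes $[e]$ and preserves the canonical connection (Lemma \ref{lem: type 1 and 2 preserve can connection}). Since $\Gamma$ has no multiple edges and the edges at $[e]$ have pairwise distinct labels $\Delta_G/\pm 1$, both $\Phi$ and $\Phi_\psi$ send the edge at $[e]$ with label $\alpha$ to the unique edge at $[e]$ with label $\Psi(\alpha)$. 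Hence $\Phi$ and $\Phi_\psi$ agree on the vertex $[e]$ and on all edges issuing from it, so by the rigidity statement Lemma \ref{AutoRigidity} they coincide; thus $\Phi=\Phi_\psi$ is of Type $2$, which proves that $\eta$ is an isomorphism. For the signed version, Type $1$ automorphisms preserve the signed structure (Example \ref{Type1}) and, by definition, so do the elements of $T_2^+(\Gamma)$, so $\eta$ restricts to an injective homomorphism $W(G)\ltimes_\rho T_2^+(\Gamma)\to\mathrm{Aut}_\nabla^+(\Gamma)$; conversely, if $\Phi\in\mathrm{Aut}_\nabla^+(\Gamma)$ then, in the decomposition $\Phi=L_{[w_0]}\circ\Phi_\psi$ obtained above, $\Phi_\psi=L_{[w_0]}^{-1}\circ\Phi$ is a composite of two automorphisms respecting the signed structure, so $\Phi_\psi\in T_2^+(\Gamma)$ and $\Phi$ is in the image.

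I expect the main obstacle to be the bookkeeping inside the surjectivity argument: constructing, by dualisation, a group automorphism $\psi$ of $G$ whose induced Type $2$ automorphism has \emph{precisely} the linear transformation $\Psi$ (and not some transpose or inverse of it), checking $\psi(T)=T$, and lining up the vertex, its edges, the connection and the linear map so that the rigidity Lemma \ref{AutoRigidity} forces $\Phi=\Phi_\psi$. Everything else is formal once Lemmas \ref{AutoRigidity} and \ref{LieAlgAuto} are in hand.
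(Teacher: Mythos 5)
Your proof is correct and follows essentially the same strategy as the paper: verify $\eta$ is a homomorphism via the commutation relation $\Phi\circ L_{[w]}\circ\Phi^{-1}=L_{\Phi[w]}$, check injectivity at $[e]$, and for surjectivity reduce to the case $\Phi[e]=[e]$, extract the linear map $\Psi$ permuting $\Delta_G$, extend to a Lie algebra automorphism via Lemma~\ref{LieAlgAuto}, integrate to $\psi\colon G\to G$ using simple-connectedness, and then pin down $\Phi$ as the associated Type 2 automorphism via the rigidity Lemma~\ref{AutoRigidity}. The signed statement is handled identically.

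Two small points where your presentation differs from the paper, both to your advantage. First, for the commutation relation the paper also appeals to Lemma~\ref{AutoRigidity} (checking agreement only at $[e]$ and its adjacent edges), whereas you verify it directly: agreement on all of $V(\Gamma)=W(G)$ because $\Phi$ is a group automorphism of $W(G)$, agreement on edges because $\Gamma$ has no multiple edges, and agreement of linear maps via $\Ad_{\psi(g)}=d\psi\circ\Ad_g\circ d\psi^{-1}$. This is slightly more self-contained. Second, in the surjectivity step you explicitly track the dualization, defining $\phi:=(\Psi^\ast)^{-1}\colon\mft\to\mft$ and checking that $\phi^\ast=\Psi^{-1}$ permutes $\Delta_G$ so that Lemma~\ref{LieAlgAuto} applies, and then that the resulting Type 2 automorphism indeed has linear map $\beta\mapsto\beta\circ(d\psi)^{-1}=\Psi(\beta)$; the paper's proof elides this and simply writes that ``$\Phi_\ast$ permutes the roots'' and can be extended, conflating $\mft$ with $\mft^\ast$. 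Your extra care is exactly the ``bookkeeping obstacle'' you anticipated, and you resolve it correctly.
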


\begin{proof}
Recall that $\rho$ is indeed a representation by definition of $T_{2}(\Gamma)$, cf.
Example \ref{ex:Type2}, and thus $W(G) \ltimes T_{2}(\Gamma)$ is a semidirect
product. To show that $\eta$ is a homomorphism, we have to show first the identity 
\[
	\Phi \circ L_{[w]} \circ \Phi^{-1}  = L_{\Phi[w]}.
\]
By Lemma \ref{AutoRigidity} and Lemma \ref{lem: type 1 and 2 preserve can connection} it suffices to check that both sides agree on a vertex and all
edges emerging from it. Clearly both sides agree on the identity element $[e]$ of $W(G)$.
Let $[\bar w]$ be a vertex different from $[e]$ which lies on an edge emerging from $[e]$.
Since $\Phi$ is an automorphism of $W(G)$ we have that both sides agree also on $[\bar
w]$. Since for $G/T$ the vertices $[e]$, $[\bar w]$ determine a unique edge, the identity
is proven. Now we have
\begin{align*}
	\eta(([w_{1}], \Phi_{1}) \cdot ([w_{2}], \Phi_{2})) &= \eta(
	([w_{1}]\rho(\Phi_{1})[w_{2}], \Phi_{1}\Phi_{2})) \\
																											&= L_{[w_{1}] \Phi_{1}[w_{2}]} \circ \Phi_{1} \circ \Phi_{2}\\
																											&= 	L_{[w_{1}]} \circ \Phi_{1} \circ
																											L_{[w_{2}]} \circ \Phi_{2}\\
																											&=\eta([w_{1}], \Phi_{1}) \circ
																											\eta([w_{2}], \Phi_{2}).
\end{align*}
It remains to show that $\eta$ is bijective. Suppose $\eta([w], \Phi) = \mathrm{id}$, Then
we have $[e] = L_{[w]}\Phi[e] = [w]$ and therefore $\Phi = \mathrm{id}$, which shows that
$\eta$ is injective. Now let $\Psi \in \mathrm{Aut}_{\nabla}(\Gamma)$ and set $[w]:=
\Psi[e]$ and $\Phi:= L_{[w]^{-1}} \circ \Psi$. The theorem is proven if  we show $\Phi \in
T_{2}(\Gamma)$. From $\Phi[e]=[e]$ we infer that
$\Phi$ permutes the edges emerging from $[e]$. 
From the definition of graph automorphisms, there
 is an isomorphism $\Phi_{\ast} \colon \mathfrak t^{\ast} \to \mathfrak t^{\ast}$ such
 that $\alpha(\Phi(e))) = \Phi_{\ast}(\alpha(e))$, where $\alpha$ is the axial function
 defined in Theorem \ref{thm:GHZ}. Thus $\Phi_{\ast}$ permutes the roots and Lemma
 \ref{LieAlgAuto} implies that $\Phi_{\ast}$ can be extended to an automorphism of
 $\mathfrak g$. Since $G$ is simply connected this automorphism corresponds to an
 automorphism $f_{\Phi} \colon G \to G$, such that $(f_{\Phi})_{\ast} = \Phi_{\ast} \colon
 \mathfrak g \to \mathfrak g$. The Type 2 automorphism induced from $f_{\Phi}$, see
 Example \ref{ex:Type2}, coincides  with $\Phi$, which follows from Lemma
 \ref{AutoRigidity}, because they coincide on all edges emerging from $[e]$.
 
 The statement on the automorphism group of the signed structure follows immediately from Lemma \ref{lem:type2signed} and the fact that Type 1 automorphisms respect it.
\end{proof}

	\section{Realizability}\label{sec:Realizability}

 Let $\pi\colon\Gamma'\to B$ be a $T$-GKM fiber bundle, where $B$ is a two-valent
 GKM-graph and fibers are isomorphic to the GKM graph $\Gamma$ of $G/T$ as described in Section
 \ref{sec:homspaces}. We assume now and throughout that any two weights at some vertex
 (and hence any vertex) of $B$, which are elements in the weight lattice of $T$, span a
 primitive lattice in the latter (or equivalently the common kernel of the weights of $B$ is
 a connected subgroup).
	We label the $n$ vertices of $B$ by $v_1,\hdots,v_n,v_{n+1}=v_1$,
	denote the edge connecting $v_i$ with $v_{i+1}$ by $e_i$, and we identify
	$\Gamma$ with $\Gamma_{v_1}'$, the fiber over $v_1$, via a fixed isomorphism. By definition of a GKM fibration,
we obtain isomorphisms of GKM graphs $\Phi_{e_i}:\Gamma'_{v_i}\to \Gamma'_{v_{i+1}}$,
whose concatenation defines an automorphism $\Phi:=\Phi_{e_n}\circ \hdots \circ
\Phi_{e_1}\colon \Gamma \to \Gamma$. We denote the associated linear maps by $\Psi_{e_i}$
and put $\Psi:= \Psi_{e_n}\circ\hdots\circ \Psi_{e_1}$. In the following, we will refer to
$\Phi$ as the \emph{twist automorphism} of the GKM fiber bundle $\pi$.

Let us fix on $\Gamma$ a signed structure associated to some choice of positive roots $\Delta_+\subset \Delta_G$.

\begin{lem}\label{lem: fiberwise signed}
If the twist automorphism $\Phi$ respects the signed structure, i.e., is an element of $\Aut_\nabla^+(\Gamma)$, then the fiber bundle carries a fiberwise signed structure.
\end{lem}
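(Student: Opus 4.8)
The plan is to produce the fiberwise lift $\tilde\alpha$ on the vertical edges of $\Gamma'$ by transporting the fixed signed structure on $\Gamma=\Gamma'_{v_1}$ once around the base. For each $i$ I fix a linear automorphism $\Psi_{e_i}$ of $\ZZ^m$ associated to the GKM‑graph isomorphism $\Phi_{e_i}\colon\Gamma'_{v_i}\to\Gamma'_{v_{i+1}}$ and, starting from the given signed structure on $\Gamma'_{v_1}$, define $\tilde\alpha$ on the vertical edges of $\Gamma'_{v_{i+1}}$ by $\tilde\alpha(\Phi_{e_i}(e')):=\Psi_{e_i}(\tilde\alpha(e'))$. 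This is automatically a lift of $\alpha$ with $\tilde\alpha(\overline e)=-\tilde\alpha(e)$, and it turns each $\Gamma'_{v_i}$ into a signed GKM graph which, by transport, is isomorphic as such to the signed GKM graph of $G/T$. The only thing to verify is consistency: running $i$ once around $B$, the structure obtained on $\Gamma'_{v_{n+1}}=\Gamma'_{v_1}$ must coincide with the one we began with. Since the GKM graph of $G/T$ has no multiple edges, each $\Phi_{e_i}$ is determined on edges by its action on vertices, so this reduces to the identity $\Psi\circ\tilde\alpha=\tilde\alpha\circ\Phi$ on the fiber weights, where $\Psi=\Psi_{e_n}\circ\dots\circ\Psi_{e_1}$; and this is exactly the assertion that $\Phi$ respects the signed structure, i.e.\ $\Phi\in\Aut_\nabla^+(\Gamma)$. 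Each $\Psi_{e_i}$ is determined only up to one sign per irreducible factor of $\Delta_G$ on the fiber‑weight sublattice, and these signs must be fixed with care; see the last paragraph.

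Next I choose the connections. On the $2$‑valent graph $B$ the connection $\nabla^B$ is forced, and on $\Gamma'$ the transport along a horizontal edge $\tilde e_i$ is forced as well: on horizontal edges it is the lift of $\nabla^B$, and on vertical edges it must be $\Phi_{e_i}$ because $\Gamma'_{v_{i+1}}$ has no multiple edges, by Definition \ref{defn:GKMfiberbundle}. The only remaining freedom is the transport of vertical edges along vertical edges, i.e.\ the connection internal to each fiber; for this I take the canonical connection of $G/T$ (Theorem \ref{thm:GHZ}), transported to $\Gamma'_{v_i}$ via the isomorphisms above, and extend it to a connection on all of $\Gamma'$ respecting the horizontal/vertical splitting in the (essentially unique) way compatible with $\alpha$ and with Definition \ref{defn:GKMfibration}. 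By the computation of Remark \ref{rem: canonical connection is compatible}, now read with the signed axial function, this canonical connection satisfies $\tilde\alpha(\nabla_e f)=\tilde\alpha(f)+c\,\tilde\alpha(e)$ with $c\in\ZZ$, so the congruence of Definition \ref{defn:fiberwise signed}$(i)$ holds whenever $e$ is a \emph{vertical} edge. Condition $(ii)$ of Definition \ref{defn:fiberwise signed} holds by construction, since $\tilde\alpha\circ\Phi_{e_i}=\Psi_{e_i}\circ\tilde\alpha$.

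It remains to check the congruence of Definition \ref{defn:fiberwise signed}$(i)$ when $e$ is a \emph{horizontal} edge $\tilde e_i$, that is, $\Psi_{e_i}(\tilde\alpha(e'))\equiv\tilde\alpha(e')\bmod\alpha_B(e_i)$ for every vertical $e'$. The available input is that $\nabla$ is in any case compatible with the \emph{unsigned} axial function of $\Gamma'$, which yields $\alpha(\Phi_{e_i}(e'))\equiv\pm\alpha(e')\bmod\alpha_B(e_i)$; using that $\alpha_B(e_i)$ is primitive and comparing the congruences for two roots and for their sum whenever that sum is again a root, one sees the sign is constant along each irreducible factor of $\Delta_G$. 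One can therefore normalise the leftover sign freedom in each $\Psi_{e_i}$ so that this sign becomes $+1$ on every factor. The main obstacle is that these per‑edge normalisations must be performed so as not to destroy the global constraint of the first paragraph, i.e.\ that $\Psi_{e_n}\circ\dots\circ\Psi_{e_1}$ still realises $\Phi$ on the signed structure; I expect to handle this one irreducible factor of $\Delta_G$ at a time, tracking the total sign accumulated around $B$, which is rigid because $\Phi$ preserves the canonical connection as well as the signed structure. This last point is where the full hypothesis $\Phi\in\Aut_\nabla^+(\Gamma)$, rather than just that $\Phi$ be a GKM‑graph automorphism, is genuinely used.
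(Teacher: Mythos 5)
The first two paragraphs of your proposal coincide with what the paper actually writes: one transports the signed structure on $\Gamma'_{v_1}$ to the other fibers via the $\Phi_{e_i}$ (together with chosen $\Psi_{e_i}$), condition $(ii)$ of Definition~\ref{defn:fiberwise signed} then holds tautologically, the congruence of condition $(i)$ along \emph{vertical} edges holds because the canonical connection of $G/T$ is compatible with the Kähler signed structure (Remark~\ref{rem: canonical connection is compatible}) and that compatibility is preserved under transport, and the only remaining issue is global consistency after going once around $B$, which the paper declares to be exactly the hypothesis that $\Phi$ preserves the signed structure. In that sense your approach is the paper's approach.

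Your third paragraph, however, raises a point the paper's proof does not discuss at all, namely condition $(i)$ of Definition~\ref{defn:fiberwise signed} for \emph{horizontal} edges: since $\nabla_{\tilde e_i}e'=\Phi_{e_i}(e')$, one needs $\Psi_{e_i}(\tilde\alpha(e'))\equiv\tilde\alpha(e')\bmod\alpha_B(e_i)$ and not merely $\equiv\pm\tilde\alpha(e')$. You correctly observe that the sign can a priori depend on $e'$, that relations of the form $\tilde\alpha(e'_1)+\tilde\alpha(e'_2)=\tilde\alpha(e'_3)$ among roots force it to be constant per irreducible factor of $\Delta_G$, and that one must therefore normalise each $\Psi_{e_i}$ per factor. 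But the essential step — that these per-edge normalisations can be made consistently with the requirement that $\Psi_{e_n}\circ\cdots\circ\Psi_{e_1}$ realise $\Phi$ on the signed structure — is not actually carried out. You write that the accumulated sign around $B$ is ``rigid because $\Phi$ preserves the canonical connection as well as the signed structure,'' but this is an assertion, not an argument. Once the $\Psi_{e_i}$ are pinned down on the root lattice by the positive-sign requirement, the product $\Psi_{e_n}\circ\cdots\circ\Psi_{e_1}$ is determined there; it agrees with a signed witness $\Psi$ for $\Phi$ only up to a sign on each irreducible factor, and it is precisely this sign that you need to control and have not. This is a genuine gap in your proof. (For what it is worth, the paper's own proof of the lemma is silent on condition $(i)$ for horizontal edges and on the choice of the $\Psi_{e_i}$, so you are being more careful than the source; but being more careful also means the burden of closing the argument falls on you, and the third paragraph does not discharge it.)
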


\begin{proof}
The fiber graph $\Gamma = \Gamma'_{v_1}$ carries a signed structure. The isomorphisms
$\Phi_{e_i}$ inductively induce signed structures on all other fibers such that by
definition $\Phi_{e_i}$ preserves the signed structures on $\Gamma_{v_i}'$ and
$\Gamma_{v_{i+1}}$. It remains to check that this is globally well defined, i.e.\ after a
full rotation around $B$, $\Phi_{e_n}$ is compatible with the chosen signed structures on
$\Gamma_{v_n}'$ and $\Gamma_{v_1}'$. But this is equivalent to $\Phi$ preserving the signed
structure on $\Gamma_{v_1}'$.
\end{proof}
	
	The goal of this section is the proof of the following

	\begin{theorem}\label{thm:realization}
		Let $\Gamma\to \Gamma'\to B$ be a $T$-GKM fiber bundle, where
		$\Gamma$ is the GKM graph of $G/T$ and $B$ is $2$-regular.
		Furthermore assume that the common kernel of the weights of $B$ is
		a connected subgroup of $T$ and that the twist automorphism $\Phi$ lies in
	$\Aut_\nabla^+(\Gamma)$. Denote by $\Phi={\Phi}_1\circ{\Phi}_2$ the decomposition as in
	Theorem \ref{thm:type decomposition}, with ${\Phi}_1 = L_{[w]}$ for some $w\in W(G)$ and
	${\Phi}_2 \in T_2^+(\Gamma)$.
		\begin{enumerate}
			\item If ${\Phi}_2=\Id$, then the bundle is realizable. More precisely, there exists a smooth $T$-equivariant fiber bundle $Z\to X$ with fibers over fixed points twisted equivariantly diffeomorphic to $G/T$, such that the $T$-action on $Z$ is of GKM type with GKM graph $\Gamma'$ and $X$ is a quasitoric $4$-fold with GKM graph $B$.
			\item If ${\Phi}_2\neq \Id$, then the map $H^*(\Gamma')\rightarrow H^*(\Gamma)$ induced by the fiber inclusion is not surjective. In particular, the GKM fiber bundle is not realizable by an equivariant fiber bundle in which base, total space, and fibers over fixed points are GKM manifolds.
		\end{enumerate}	
		
	\end{theorem}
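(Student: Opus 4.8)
The plan is to treat the two cases separately, since they require opposite strategies: an explicit geometric construction for $(i)$ and a cohomological obstruction for $(ii)$.

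For part $(i)$, since $\Phi_2 = \Id$ we have $\Phi = L_{[w]}$ for some $w \in W(G)$; by Lemma \ref{lem: fiberwise signed} the bundle carries a fiberwise signed structure. The first step is to build the base: using the hypothesis that the common kernel of the weights of $B$ is connected, I would show that $B$ is the GKM graph of a quasitoric $4$-fold $X$ (an $n$-gon with primitive characteristic data gives a quasitoric manifold over a polygon via the canonical model construction recalled in Section \ref{sec:quasitoric}). The second step is to build the total space $Z$ fiberwise by a clutching construction: over each edge $e_i$ of $B$ one wants to glue trivial pieces $D^2 \times (G/T)$ using, at the single nontrivial transition, an automorphism of $G/T$ realizing $\Phi_{e_i}$ geometrically — here the key point is that a Type 1 automorphism $L_{[w]}$ of the graph $\Gamma$ is realized by honest left multiplication $L_{w_0} \colon G/T \to G/T$, which is twisted $T$-equivariant (Example \ref{Type1}); the associated linear twist of the weight lattice is accounted for by the primitivity hypothesis on $B$ and by choosing the clutching over the generic isotropy subtorus $U \subset T$ along each edge, exactly as in Examples \ref{ex: cool counterexample} and \ref{ex: long counterexample} but now with $G/T$ as fiber. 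The third step is to verify that the resulting $T$-action on $Z$ is GKM with graph $\Gamma'$: the fixed points are exactly the $W(G)$-many fixed points in each fiber over a vertex of $X$, the isotropy weights at each such fixed point split into vertical weights (roots of $G$, twisted by the appropriate Weyl element) and horizontal weights (the two weights of $B$ at that vertex), matching $\Gamma'$ by construction. Finally, one upgrades to a \emph{smooth} fiber bundle: Lemma \ref{lem: quasi smooth} provides a section of $X \to X/T$ along which the transition functions of the bundle, which take values in maps $U \to \Aut(G/T)$, vary smoothly, so the clutched bundle is smooth. I expect the main technical obstacle here to be the bookkeeping of smooth structures and equivariance in the clutching — making sure the transition automorphisms can be chosen to be the identity near the relevant fixed-point fibers and smooth in the base parameter — but Lemma \ref{lem: quasi smooth} is designed precisely for this and the obstruction-free nature of clutching over a polygon (which is contractible after removing a vertex, or whose boundary circle argument reduces everything to $\pi_0$ of the $U$-equivariant automorphism group) should make this routine.

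For part $(ii)$, the strategy is purely combinatorial: assume $\Phi_2 \neq \Id$ and show that the fiber restriction map $H^*(\Gamma') \to H^*(\Gamma)$ fails to be surjective; since for GKM manifolds graph cohomology computes ordinary cohomology, and for a \emph{genuine} equivariant fiber bundle of GKM manifolds the Leray--Hirsch-type argument (or simply the existence of the fiber inclusion on cohomology) would combine with the combinatorial Leray--Hirsch theorem of \cite{MR2873096} to force surjectivity, this non-surjectivity is an obstruction to geometric realizability. The heart of the argument is therefore: an element of $H^*(\Gamma')$ restricts, on each fiber $\Gamma'_{v_i}$, to a class that is \emph{invariant} under the twist automorphisms relating consecutive fibers; hence a class in the image of $H^*(\Gamma') \to H^*(\Gamma)$ must in particular be $\Phi$-invariant. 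So it suffices to exhibit a class in $H^*(\Gamma) = H^*(G/T)$ that is not fixed by $\Phi = L_{[w]} \circ \Phi_2$. Since Type 1 automorphisms $L_{[w]}$ act trivially on $H^*(G/T)$ (left multiplication by $W(G)$ on $G/T$ is homotopic to the identity, as $G$ is connected — alternatively, on graph cohomology $L_{[w]}$ permutes vertices but acts by the identity linear map and one checks it induces the identity on the quotient $H^*$), the action of $\Phi$ on $H^*(G/T)$ equals the action of the outer automorphism $\Phi_2$. Now $\Phi_2 \neq \Id$ in $T_2^+(\Gamma) \cong \mathrm{Out}(G)$ corresponds to a nontrivial Dynkin diagram automorphism; one then invokes the classical fact that a nontrivial diagram automorphism acts nontrivially on $H^*(G/T) \cong$ the coinvariant algebra of $W(G)$ — for instance it permutes the degree-two classes dual to the fundamental weights nontrivially, since it permutes the simple roots nontrivially. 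This produces the desired non-fixed class and completes the obstruction. I expect the main subtlety in $(ii)$ to be pinning down precisely why realizability would imply surjectivity of the fiber restriction — one must argue that \emph{any} equivariant fiber bundle with GKM base, total space, and fibers over fixed points, realizing this graph bundle, yields on cohomology the map $H^*(\Gamma') \to H^*(\Gamma)$ under the identification with the geometric fiber inclusion, and that this map is surjective by the (ordinary, topological) Leray--Hirsch theorem applied to the equivariant bundle after passing to Borel constructions, or directly since the fiber $G/T$ has cohomology generated by restrictions of equivariant classes. This comparison between the combinatorial and geometric restriction maps is where care is needed, but it follows from the naturality of the GKM description of equivariant cohomology.
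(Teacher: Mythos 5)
Your high-level strategy for both parts matches the paper's, but in each part you gloss over the step that carries essentially all of the technical content.

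For part $(i)$, your plan (realize $B$ as a quasitoric $4$-fold via the characteristic-function construction, then clutch together twisted trivializations using the fact that $L_{[w]}$ is honestly realized by left multiplication by $w_0\in N_G(T)$, invoking Lemma~\ref{lem: quasi smooth} for smoothness) is the correct one. However, the central nontrivial step is not "bookkeeping." The paper constructs a principal $G$-bundle over $X\setminus\pi^{-1}(D)$ for a small interior disk $D\subset P$, by gluing pieces $U_i\times G_i$ over regions around vertices, and then must \emph{fill in} over $D$. The restriction of the bundle to $\partial D$ is a mapping torus of the clutching diffeomorphism determined by $w$, and filling in is possible precisely because $w\in Z_G(T')$ and $Z_G(T')$ is connected (this is where the hypothesis that $T'$ is connected is decisive). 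Your allusion to "$\pi_0$ of the $U$-equivariant automorphism group" does not identify this: the relevant object is $\pi_0(Z_G(T'))$, not a per-edge structure group, and without naming it one cannot see why the bundle over $\partial D$ is trivial. Also, your gluing scheme ("trivial pieces $D^2\times(G/T)$ over each edge $e_i$") is geometrically confused — in the paper the pieces live over neighborhoods of \emph{vertices} of the polytope, the gluings occur along strips crossing the edges, and the construction is carried out for the associated principal $G$-bundle, only afterwards dividing by the right $T$-action.

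For part $(ii)$, the logical skeleton (surjectivity of the fiber restriction would force the twist automorphism to act trivially on $H^2(\Gamma)$; Type~1 acts trivially by connectedness of $G$; a nontrivial Type~2 acts nontrivially by the Borel-construction or coinvariant-algebra argument; contradiction) is exactly the paper's. But the crucial step you dispatch with a single "hence" — "hence a class in the image of $H^*(\Gamma')\to H^*(\Gamma)$ must in particular be $\Phi$-invariant" — is the heart of the proof and is by no means automatic. For $x\in H^*_T(\Gamma')$, the restrictions to $\Gamma'_{v_i}$ and $\Gamma'_{v_{i+1}}$ are related via $\Phi_{e_i}$ only up to multiples of the horizontal weight $\alpha(e_i)$ \emph{and} the linear twist $\Psi_{e_i}$; the correction terms are a priori vertex-dependent and need not lie in $H^{>0}(BT)\cdot H^*_T$. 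The paper kills them by reducing to $H^2$ of a single edge (Lemma~\ref{lem: H2lem}), invoking the fiberwise signed structure from Lemma~\ref{lem: fiberwise signed}, and carrying out a sign-sensitive computation showing the correction is diagonal (i.e., of the form $(\beta,\beta)$), which is exactly what makes it vanish in $H^2(S_e)$. Without this computation — which depends essentially on $\Phi$ preserving the signed structure, a hypothesis of the theorem you never invoke — the claim of $\Phi$-invariance is unjustified.
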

	
\begin{remark}
A version of this theorem in which the assumption that the twist automorphism respects the
signed structure was missing is contained in \cite{Wardenski2023}.
\end{remark}	
	
\begin{ex}
Consider the Lie group $G=(\SU(2))^3$ with maximal torus $T=T^3$. Let $x,y,z$ denote the weights dual to the circles acting on each factor. Then $G/T\cong (S^2)^3$ and the GKM graph is a cube with edge labels $x,y,z$, where parallel edges share the same weight. As base space we take $\C P^2$ with the weights

\begin{center}
\begin{tikzpicture}

\draw[very thick] (0,0) -- ++(2,0) -- ++(-2,2) -- ++(0,-2);

\node at (1,-0.4){$x$};
\node at (1.5,1.3){$x-y$};
\node at (-0.4,1){$y$};

 \node at (0,0)[circle,fill,inner sep=2pt]{};
 \node at (2,0)[circle,fill,inner sep=2pt]{};
 \node at (0,2)[circle,fill,inner sep=2pt]{};

\end{tikzpicture}
\end{center}

Over this we consider the GKM fibre bundle given by

\begin{center}
	\begin{tikzpicture}

\draw[very thick] (0,0) -- ++(4,0) -- ++(4,0) -- ++(4,0);

\draw[very thick] (0,0) -- ++(-1.2,1.5);
\draw[very thick] (0,0) -- ++(0,2);
\draw[very thick] (0,0) -- ++(1.2,1.5);

\node at (-1.1,1.8) {$x+z$};
\node at (0,2.3) {$z$};
\node at (1.1,1.8) {$y+z$};

\draw[very thick] (4,0) -- ++(-1.2,1.5);
\draw[very thick] (4,0) -- ++(0,2);
\draw[very thick] (4,0) -- ++(1.2,1.5);

\node at (2.9,1.8) {$x+z$};
\node at (4,2.3) {$z$};
\node at (5.1,1.8) {$x+y+z$};

\draw[very thick] (8,0) -- ++(-1.2,1.5);
\draw[very thick] (8,0) -- ++(0,2);
\draw[very thick] (8,0) -- ++(1.2,1.5);

\node at (6.9,1.8) {$y+z$};
\node at (8,2.3) {$z$};
\node at (9.1,1.8) {$x+y+z$};

\draw[very thick, dashed] (12,0) -- ++(-1.2,1.5);
\draw[very thick, dashed] (12,0) -- ++(0,2);
\draw[very thick, dashed] (12,0) -- ++(1.2,1.5);

\node at (10.9,1.8) {$y+z$};
\node at (12,2.3) {$z$};
\node at (13.1,1.8) {$x+z$};

\node at (2,-0.5) {$x$};
\node at (6,-0.5) {$x-y$};
\node at (10,-0.5) {$y$};

 \node at (0,0)[circle,fill,inner sep=2pt]{};
 \node at (4,0)[circle,fill,inner sep=2pt]{};
 \node at (8,0)[circle,fill,inner sep=2pt]{};
  \node at (12,0)[circle, draw = black, fill = white, inner sep=2pt]{};

\end{tikzpicture}
\end{center}
where horizontal edges are in fact horizontal, fibers are the cubes spanned by the three
non-horizontal edges at every vertex (where horizontal edges of the respective label
emanate from every vertex of the cube, only one of which is visible in the picture), and
the rightmost depicted vertex is glued to the first vertex, with edges of same label being
identified. In particular the twist automorphism swaps the two edges with label $x+z$ and $y+z$ in the left most fiber.  Note that
this indeed defines a GKM fiber bundle: as it preserves the canonical connection, we
obtain an induced connection on the total space of the fibration satisfying the conditions
in Definitions \ref{defn:GKMfibration} and \ref{defn:GKMfiberbundle}. The twist
automorphism is of Type 2, induced by the automorphism of $G$, swapping the first and
third factor, hence by Theorem \ref{thm:realization} this GKM fiber bundle cannot be
realized geometrically.

A somewhat systematic construction of realizable bundles is discussed in Section \ref{sec: examples}.
\end{ex}

	\begin{proof}[Proof of $(i)$]
 We use the notation introduced in the beginning of the section.
 We claim that the dual of the linear map $\Psi_{e_i}:\mft^*\to \mft^*$ fixes the
codimension $1$-subgroup $K_i$ defined by the weight $\alpha(e_i)$ of $e_i$. To see this,
consider any weight $\beta$ of a vertical edge in the fiber over $v_i$. Then, for any
$v\in \ker \alpha(e_i)$ we have $\beta(\Psi_{e_i}^*(v)) = \Psi_{e_i}(\beta)(v) = (\beta +
c\alpha(e_i))(v) = \beta(v)$ for some constant $c$; note that there do not appear any
signs because the fiber bundle is fiberwise signed. Hence, as the fiber weights span
$\mft^*$, it follows that $\Psi_{e_i}^*(v) = v$. Thus, the common kernel of the weights of
$B$ gives a codimension $2$	subtorus $T'\subset T$ which is fixed by all the
$\Psi_{e_i}^*$. In particular, the automorphism $\Psi$ associated to the twist
automorphism $\Phi$, which is in $(i)$ assumed to be of Type 1, is of the form
$\Psi=L_{[w]}$, with $w$ contained in the (connected) centralizer $Z_T(T')$.

As explained in Section \ref{sec:quasitoric}, the GKM graph $B$ as in
Theorem \ref{thm:realization} is realizable by a non-effective strongly quasitoric
manifold $X$ with GKM graph $B$. Its orbit space is diffeomorphic, as a manifold with
corners, to a convex $2$-dimensional polytope $P$, whose boundary corresponds
combinatorially to the graph $B$. The kernel of the action is the codimension $2$
subtorus $T'$. There is a continuous section of the orbit map projection $\pi\colon
X\rightarrow P$ which induces, as explained in Section \ref{sec:quasitoric}, an
equivariant homeomorphism $X=P\times T/_\sim$, where $\sim$ identifies points in the
$T$-fibers in the following way: over	the interior of $P$ we project to $T/T'$, over each
edge in the boundary we divide by the codimension $1$ subtorus which is the kernel of the
corresponding weight in $B$ and over the vertices we collapse the entire fiber.	We choose
this section, and hence the identification $X\cong P\times T/_\sim$, as in Lemma \ref{lem:
quasi smooth}. Let $D\subset P$ be a small open disk in the interior. 
		\begin{figure}[h]
			\centering
			\includegraphics{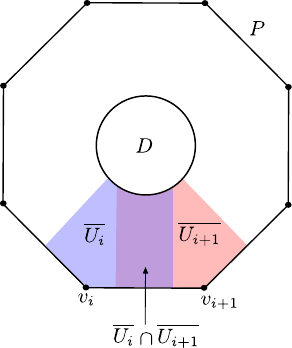}
		\end{figure}

		Now we divide $P\backslash D$ into open regions
		$\overline{U_i}$ where $\overline{U_i}$ contains a single vertex $v_i$ and
		$\overline{U_{i}}\cap \overline{U_{i+1}}$ intersect as depicted the figure above (note
		that parts of the boundary of $D$ and parts of the edges are contained in
		$\overline{U_{i}}$). Let
		$U_i=\pi^{-1}(\overline{U_i})$.
		
		We first construct a certain principal $G$-bundle. For $i=1,\ldots,n+1$, let
		$\widetilde{\Psi}_i\colon T\rightarrow T$ be the automorphism
		induced by $\Psi_{e_i}^*$ and $G_i$ denote the $T$-space which is $G$ with the action
		given by left multiplication precomposed with the automorphism
		$\widetilde{\Psi}_{i-1}\circ\ldots\circ\widetilde{\Psi}_1$.
		We consider $B_i= U_i\times G_i$ with the diagonal action (we treat $B_1$ and $B_{n+1}$
		as different $T$-spaces even though the underlying spaces are the same). Let $S_i=U_i\cap
		U_{i+1}\cong (0,1) \times [0,1]\times T/_\sim$, where the $[0,1]$ component runs from $\partial P$ at $0$ to $\partial D$ at $1$ and $\sim$ collapses $T$ to $T/T'$ over points in
		$(0,1) \times (0,1]$ and to $T/K_i$ over $(0,1) \times \{0\}$. By construction, we have
		that $S_{i}$ is a smooth submanifold of $X$ with non-empty boundary (given by the
		preimage of a part of the boundary of $D$ under $\pi$).  For $i=1,\ldots,n$, we glue $B_i$ and $B_{i+1}$
		along equivariant diffeomorphism
		$S_i\times G_i\rightarrow S_i\times G_{i+1}$ defined as follows: consider the unique $T$-equivariant diffeomorphism
		\[
		(0,1) \times [0,1]\times T\times G_i\longrightarrow (0,1) \times [0,1]\times T\times G_{i+1}
		\] 
		(with $T$ acting diagonally on $T\times G_i$) which restricts to the identity on $(0,1)\times [0,1]\times\{1\}\times G$.
		It is of the form $(a,b,t,g)\mapsto (a,b,t,\varphi_t(g))$ with $\varphi_t(g)=t\cdot (t^{-1}\cdot g)$, where the left hand multiplication is understood as the action of $G_{i+1}$ and the right hand multiplication is in $G_i$.
		Now
		since the $K_i$-actions on $G_i$ and $G_{i+1}$ agree by construction it follows that
		$t\mapsto \varphi_t$ descends to $T/K_i$ and hence we get an induced $T$-equivariant
		homeomorphism 
		\[S_i\times G_i\longrightarrow
		S_i\times G_{i+1};\, .\]

		Smoothness can be checked componentwise with the only
		interesting case being that the map $S_i\times G_i\rightarrow G_{i+1}$ is
		smooth. Let $p\colon S_i\rightarrow T/K_i$ be the projection. Then this factors as
		$(s,g)\mapsto \varphi_{p(s)}(g)$. Clearly $\varphi_t(g)$ depends smoothly on $t$ and
		$g$, and $p$ is smooth by Lemma \ref{lem: quasi smooth}.

		By assumption the automorphism $\widetilde \Phi_n\circ \ldots \circ \widetilde \Phi_1$ is
		given by conjugation $c_w$ with $w\in N_G(T)$. The map $(x,g)\mapsto (x,w^{-1}g)$ is hence
		a $T$-equivariant diffeomorphism $B_{n+1}\rightarrow B_1$ which we use to identify the
		two. Let $Y$ denote the resulting space given by the gluing of all the $B_i$. It
		carries a left $T$-action commuting with the obvious right $G$-action acting on all
		the $G_i$-factors by right multiplication. The latter gives $Y$ the
		structure of a principal $G$-bundle over $X\backslash \pi^{-1}(D)$.
		
		We investigate this bundle over the boundary $\partial D\cong S^1$. We claim that
		$Y|_{\partial D}$ is $(T \times G)$-equivariantly diffeomorphic to the following mapping torus: Consider the unique $T$-equivariant extension $T\times G_1\rightarrow T\times G_1 $ of $(1,g)\mapsto (1,w^{-1}g)$. Since $c_w$ fixes $T'$ it follows that $w\in Z_G(T')$ and hence the above map descends to a diffeomorphism \[f \colon T/T'\times G_1\longrightarrow T/T'\times G_1.\]
		We note that $f$ is equivariant with respect to the left $T$-action and the right $G$-action and hence the mapping torus $M_f = [0,1]\times T/T'\times G_1/\sim$ is a smooth $(T\times G)$-manifold.
		
		Now in order to see $Y|_{\partial D} \cong M_f$ we observe that $B_i|_{\partial D}$ is given by $J_i\times T/T'\times G_i$ for some interval $J_i\subset \partial D$. For an appropriate parametrization $\partial D\cong [0,1]/_\sim$, the unique $T$-equivariant maps $J_i\times T/T'\times G_1\rightarrow B_i$ which are the identity on $J_i\times \{1\cdot T'\}\times G$ piece together to a map from $[0,1]\times T/T'\times G_1$ into the gluing of $B_1$ up until $B_{n+1}$. After identifying $B_{n+1}$ and $B_1$ as above this descends to the desired diffeomorphism $M_f\rightarrow Y|_{\partial D}$.
		
		Since $Z_G(T')$ is connected (as $T'$ is connected), $f$ is $(T\times G)$-equivariantly isotopic to the identity. Consequently $Y|_{\partial D}$ is in fact $(T\times G)$-equivariantly diffeomorphic
		to $S^1\times T/T'\times G_1$. Hence we may complete $Y$ to a closed smooth $(T\times G)$-manifold
		$Z$ by gluing in $D\times T/T'\times G$. By construction, the left $T$-action on the
		quotient of $Z$ by the right $T$-action coming from the right $G$-action gives the desired
		$T$-equivariant $G/T$-bundle over $X$.
	\end{proof}
	
	For the proof of $(ii)$ we make some preliminary observations.
	\begin{lem}\label{lem: cohomologystuff}
		Let $\Gamma$ be the GKM graph of $G/T$.
		\begin{enumerate}[label=(\alph*)]
			\item Any automorphism of Type 1 induces the identity on $H^*(\Gamma)$.
			\item Any nontrivial automorphism of Type 2 induces a nontrivial map on $H^2(\Gamma)$.
		\end{enumerate}
	\end{lem}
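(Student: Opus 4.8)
The plan is to track how automorphisms of each type act on a convenient degree-$2$ generating set of $H^*(\Gamma)$. For a weight $\lambda\in\ZZ^m=\ZZ_\mft^*$, let $x_\lambda\in H^2_T(\Gamma)$ be the tuple with $(x_\lambda)_{[w]}:=w\cdot\lambda$ for $[w]\in V(\Gamma)=W(G)$; that this lies in $H^2_T(\Gamma)$ is exactly the computation in Remark \ref{rem: canonical connection is compatible} with $\beta$ replaced by $\lambda$. Geometrically $x_\lambda$ is the $T$-equivariant first Chern class of the line bundle $G\times_T\CC_\lambda\to G/T$, and under $H^*(\Gamma)\cong H^*(G/T)$ its image $\bar x_\lambda\in H^2(\Gamma)$ is the ordinary first Chern class of that bundle; hence, by Borel's description of $H^*(G/T;\Q)$ (see e.g.\ \cite{MR1920389}), the classes $\bar x_\lambda$ span $H^2(\Gamma)\otimes\Q$ and generate the ring $H^*(\Gamma)\otimes\Q$. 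I also record that $\lambda\mapsto\bar x_\lambda$ is injective: the degree-$2$ part of the ideal $H^{>0}(BT)\cdot H^*_T(\Gamma)$ consists of the constant tuples (since $H^{>0}(BT)$ lives in degrees $\geq 2$ and $H^0_T(\Gamma)=\ZZ$), so $\bar x_\lambda=0$ would force $w\cdot\lambda$ to be independent of $w$, i.e.\ $\lambda$ to be $W(G)$-invariant, hence $\lambda=0$ as $G$ is semisimple.

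For $(a)$, let $\Phi=L_{[w_0]}$ be of Type 1, with linear part $\Psi(\beta)=w_0\cdot\beta$ (Example \ref{Type1}), so $\Psi^{-1}(\beta)=w_0^{-1}\cdot\beta$. Then
\[
(\Phi^*x_\lambda)_{[w]}=\Psi^{-1}\bigl((x_\lambda)_{[w_0w]}\bigr)=w_0^{-1}\cdot\bigl((w_0w)\cdot\lambda\bigr)=w\cdot\lambda=(x_\lambda)_{[w]},
\]
so $\Phi^*x_\lambda=x_\lambda$ and $\Phi^*\bar x_\lambda=\bar x_\lambda$. As $\Phi^*$ is a ring homomorphism and the $\bar x_\lambda$ generate $H^*(\Gamma)\otimes\Q$, we get $\Phi^*=\Id$ on $H^*(\Gamma)\otimes\Q$; and since $H^*(\Gamma)\cong H^*(G/T;\ZZ)$ is torsion free (the Bruhat decomposition gives a CW structure with cells in even degrees only), $H^*(\Gamma)$ embeds into $H^*(\Gamma)\otimes\Q$, whence $\Phi^*=\Id$ already on $H^*(\Gamma)$. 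Alternatively, $L_{w_0}\colon G/T\to G/T$ is freely homotopic to the identity because $G$ is connected, which gives $(a)$ at once.

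For $(b)$, let $\Phi$ be of Type 2, attached to $\psi\colon G\to G$ with $\psi(T)=T$ and linear part $\Psi(\beta)=\beta\circ (d\psi)^{-1}$ (Example \ref{ex:Type2}), so $\Psi^{-1}(\beta)=\beta\circ d\psi$. Picking $g\in N_G(T)$ representing $w$ (so that $\psi(g)$ represents $\psi(w)$) and using $\Ad_{\psi(g)}^{-1}\circ d\psi=d\psi\circ\Ad_g^{-1}$, which holds because $\psi$ is a homomorphism, one computes
\[
(\Phi^*x_\lambda)_{[w]}=\Psi^{-1}\bigl(\psi(w)\cdot\lambda\bigr)=\bigl(\lambda\circ\Ad_{\psi(g)}^{-1}\bigr)\circ d\psi=(\lambda\circ d\psi)\circ\Ad_g^{-1}=w\cdot\Psi^{-1}(\lambda),
\]
i.e.\ $\Phi^*x_\lambda=x_{\Psi^{-1}(\lambda)}$ and $\Phi^*\bar x_\lambda=\bar x_{\Psi^{-1}(\lambda)}$. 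If $\Phi$ is nontrivial then $\Psi\neq\Id$: otherwise $d\psi|_\mft=\Id$, so $\psi$ fixes every root and hence every reflection $\sigma_\alpha$, acts trivially on $W(G)$, and is then the identity automorphism by the formulas of Example \ref{ex:Type2}. So choose $\lambda\in\ZZ^m$ with $\Psi^{-1}(\lambda)\neq\lambda$; by injectivity of $\lambda\mapsto\bar x_\lambda$ we obtain $\Phi^*\bar x_\lambda=\bar x_{\Psi^{-1}(\lambda)}\neq\bar x_\lambda$, so $\Phi^*$ is already nontrivial on $H^2(\Gamma)$.

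The one place that needs care is the bookkeeping in the two displayed identities, and in particular seeing why the vertex relabeling cancels $\Psi^{-1}$ for Type 1 -- there $\Psi$ is itself left multiplication by a Weyl group element -- whereas for Type 2 the relabeling $[w]\mapsto[\psi(w)]$ is an automorphism of $W(G)$ that drops out, leaving only the linear twist $\Psi^{-1}$ acting on the classes $x_\lambda$. Everything else is the definition of the pullback $\Phi^*$ on graph cohomology recalled in Section \ref{sec:preliminaries} together with the standard structure theory of $H^*(G/T)$.
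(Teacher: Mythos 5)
Your proof is correct, and it takes a genuinely different route from the paper's. Where the paper argues geometrically — for (a) it observes that left multiplication by $w_0\in N_G(T)$ on $G/T$ is non-equivariantly homotopic to the identity because $G$ is connected, and for (b) it sets up a commuting diagram of Borel constructions and universal bundles, uses the Serre spectral sequence to reduce to a map on $H^2(BT)$, and identifies that map via transgression with $d\psi$ on $\mathfrak{t}^*$ — you work entirely inside the graph cohomology. The key device is the family of degree-$2$ classes $x_\lambda$ with $(x_\lambda)_{[w]}=w\cdot\lambda$, which you show the Type 1 and Type 2 pullbacks permute according to $\Phi^*x_\lambda=x_\lambda$ and $\Phi^*x_\lambda=x_{\Psi^{-1}(\lambda)}$ respectively. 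The cancellation of $\Psi^{-1}$ against the vertex relabeling in the Type 1 case, versus the homomorphism identity $\Ad_{\psi(g)}^{-1}\circ d\psi=d\psi\circ\Ad_g^{-1}$ producing the twist $\Psi^{-1}(\lambda)$ in the Type 2 case, is exactly the right bookkeeping. Your injectivity argument for $\lambda\mapsto \bar x_\lambda$ (the degree-$2$ part of the ideal consists of constant tuples, $W(G)$-invariant linear forms vanish for semisimple $G$) is clean and avoids spectral sequences entirely. The trade-off is that you invoke the Borel presentation of $H^*(G/T;\Q)$ to know the $\bar x_\lambda$ generate, and torsion-freeness of $H^*(G/T;\ZZ)$ to pass from $\Q$ back to $\ZZ$ in part (a) — standard facts, but external inputs of a different flavor than the paper's. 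You also include the paper's one-line homotopy argument for (a) as an alternative, so you clearly see both routes. One small remark: for (a) the ring-generation step is needed only to get the conclusion in all degrees of $H^*(\Gamma)$; for (b) you only use the $H^2$ statement, which is all the lemma asks for. Both arguments are complete and correct.
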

	
	\begin{proof}
		Since $H^*(\Gamma)\cong H^*(G/T)$ and both types of automorphisms correspond to
		continuous transformations of $G/T$ it is sufficient to work with the latter. Left
		multiplication with elements in $W_G(T)$ is non-equivariantly homotopic to the identity
		since $G$ is connected. This proves $(a)$. For part $(b)$ we may consider a Type 2
		automorphism of $\Gamma$ which is induced by an automorphism $\varphi\colon
		G\rightarrow G$ preserving $T$ and inducing a nontrivial map on $\mathfrak{t}^*$ (as
		otherwise the automorphism of the graph is the identity by Lemma \ref{AutoRigidity}).
		We consider $G$ as a $T$-space with the standard $T$-action by left
		multiplication. Using a functorial construction of the universal $T$-bundles
		(see \cite{MR77122}) $\varphi$ induces an automorphism of the universal
		$T$-bundle i.e.\ a map $\widetilde \varphi\colon ET\rightarrow ET$ such that
		$\widetilde \varphi(t\cdot e)= \varphi(t)\cdot \widetilde \varphi(e)$. Now
		$(\varphi,\widetilde \varphi)\colon G\times ET \rightarrow G\times ET$ is twisted
		equivariant with respect to $\varphi|_T$ and induces an automorphism of the Borel
		construction $G_T$. Thus we obtain a commutative diagram
		\[\xymatrix{
			G/T\ar[d] & G_T\ar[d]\ar[r]\ar[l] & BT\ar[d]\\
			G/T & G_T \ar[r]\ar[l]& BT
		}\]
		where all vertical maps are the ones induced by $\varphi$ and the horizontal maps are the standard projections of the Borel construction $G_T$. All horizontal arrows induce isomorphisms on $H^2$: the left hand arrows are homotopy equivalences due to the freeness of the $T$-action and for the right hand side this follows from the Serre spectral sequence of the Borel fibration due to $G$ being simply connected. Hence it suffices to show that $H^2(BT)\rightarrow H^2(BT)$ is nontrivial. The transgression of the Serre spectral sequence of the universal $T$-bundle identifies this with the map $\varphi^*\colon H^1(T)\rightarrow H^1(T)$, which, after passing to real coefficients, can be identified with the map induced on $\mft^*$. In particular it is nontrivial.
	\end{proof}
	
	\begin{lem}\label{lem: H2lem}
		Let $\Gamma$ be any GKM graph. For any unoriented edge $e\in E(\Gamma)$, let $S_e$ be the graph consisting of $e$ and its adjacent vertices. Then the map
		\[H^2(\Gamma)\longrightarrow \bigoplus_{e\in E(\Gamma)} H^2\left( S_e\right)\]
		induced by the inclusions is an injection.
	\end{lem}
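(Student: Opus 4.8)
The plan is to use the explicit description of $H^*(\Gamma)$ and $H^*_T(\Gamma)$ from the preliminaries, together with the fact that $H^*(\Gamma)$ is the quotient of $H^*_T(\Gamma)$ by the ideal generated by $H^{>0}(BT)$. First I would reduce the statement to a statement about $H^2_T(\Gamma)$: since everything is concentrated in low degrees, an element $f\in H^2(\Gamma)$ lifts to some $\tilde f\in H^2_T(\Gamma)$, and I claim that if $\tilde f$ restricts to zero in $H^2(S_e)$ for every edge $e$, then $\tilde f$ lies in the ideal $H^2(BT)\cdot H^0_T(\Gamma)$, hence is zero in $H^2(\Gamma)$. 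Note that $H^0_T(\Gamma)=H^0(BT)=\ZZ$ (since $\Gamma$ is connected), so this ideal in degree $2$ is exactly the "diagonal" copy of $H^2(BT)$ sitting inside $H^2_T(\Gamma)=\{(f_v)_v\in \prod_v H^2(BT)\mid f_{i(e)}\equiv f_{t(e)}\bmod\alpha(e)\}$, i.e.\ the constant tuples $(c,c,\ldots,c)$ with $c\in H^2(BT)$.

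Next I would unwind what the hypothesis says. A class $\tilde f=(f_v)_v\in H^2_T(\Gamma)$ restricts on the subgraph $S_e$, with $i(e)=v$ and $t(e)=w$, to the pair $(f_v,f_w)$, which is a class in $H^2_T(S_e)$; the congruence condition is exactly $f_v\equiv f_w\bmod\alpha(e)$, which already holds. Passing to non-equivariant cohomology $H^2(S_e)=H^2_T(S_e)/(H^2(BT)\cdot\ZZ)$, the image of $(f_v,f_w)$ is the class of $(f_v,f_w)$ modulo constant pairs $(c,c)$; this vanishes precisely when $f_v=f_w$. So the hypothesis "$\tilde f\mapsto 0$ in $H^2(S_e)$ for all $e$" means $f_v=f_w$ whenever $v,w$ are joined by an edge. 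Since $\Gamma$ is connected, this forces $f_v$ to be the same element $c\in H^2(BT)$ for all $v$, i.e.\ $\tilde f$ is a constant tuple, hence lies in $H^2(BT)\cdot H^0_T(\Gamma)$ and therefore maps to $0$ in $H^2(\Gamma)$. Conversely any lift of the zero class of $H^2(\Gamma)$ is of this form, so $f$ was already zero. This proves injectivity.

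The only genuinely delicate point is the claim that the ideal $\bigl(H^{>0}(BT)\cdot H^*_T(\Gamma)\bigr)$ meets degree $2$ in exactly the constant tuples. This is where I would be slightly careful: a priori an element of this ideal in degree $2$ is a sum $\sum_i c_i g_i$ with $\deg c_i+\deg g_i=2$ and $\deg c_i>0$, forcing $\deg c_i=2$, $\deg g_i=0$; and $H^0_T(\Gamma)=\ZZ\cdot 1$ since connectedness of $\Gamma$ makes the constancy condition mod each $\alpha(e)$ force a single integer. Hence indeed the degree-$2$ part of the ideal is $\{(c,\ldots,c):c\in H^2(BT)\}$, with no subtlety about non-primitive labels or torsion. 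Everything else is bookkeeping with the two presentations of $H^*$ and $H^*_T$ recalled in Section~\ref{sec:preliminaries}, and the observation that the inclusion $S_e\hookrightarrow\Gamma$ induces on graph cohomology simply the projection $(f_v)_v\mapsto (f_{i(e)},f_{t(e)})$ compatibly with passing to the quotient by $H^{>0}(BT)$.
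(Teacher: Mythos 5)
Your argument is correct and is essentially the paper's proof: both lift a class in $H^2(\Gamma)$ to a tuple $(f_v)_v$ in $H^2_T(\Gamma)$ and observe that vanishing in every $H^2(S_e)$ forces $f_{i(e)}=f_{t(e)}$ across every edge, hence constancy of the tuple, hence vanishing in $H^2(\Gamma)$. The paper states this in contrapositive form in three lines; you additionally spell out why the degree-$2$ part of the ideal $H^{>0}(BT)\cdot H^*_T(\Gamma)$ is exactly the constant tuples, which the paper leaves implicit, and you make the (harmless, and true in the geometric setting) connectedness assumption explicit.
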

	
	\begin{proof}
		If $x\in H_T^2(\Gamma)$ projects to a nontrivial element of $H^2(\Gamma)$, then there is an edge $e\in E(\Gamma)$ where $x$ takes different values at the vertices adjacent to $e$. In particular the image of $x$ in $H^2(S_e)$ is nontrivial.
	\end{proof}

	\begin{proof}[Proof of Theorem \ref{thm:realization} part $(ii)$.]
		Note first that if the bundle were realizable, then the Serre spectral sequence of the fiber bundle would collapse since the cohomology of GKM manifolds is concentrated in even degrees. This is however equivalent to cohomological surjectivity of the fiber inclusion. For the proof of the combinatorial statement we assume that the inclusion $\iota\colon \Gamma\rightarrow \Gamma'$ induces a surjection on cohomology and show that $\Phi$ induces the trivial map on $H^2(\Gamma)$. Then it will follow from Lemma \ref{lem: cohomologystuff} that $\Phi_2=\Id$.

		For $e\in E(\Gamma)$ let $i_e\colon S_e\rightarrow \Gamma$ be the inclusion of the GKM
		subgraph consisting of $e$ and its adjacent vertices. By Lemma \ref{lem: H2lem}, in
		order to show $\Phi^*=\Id$ it suffices to show $(\Phi\circ i_e)^*=i_e^*$ for all $e\in
		E(\Gamma)$. Due to the cohomological surjectivity of $\iota$ it is in fact sufficient
		to prove $(\iota\circ\Phi\circ i_e)^*=(\iota\circ i_e)^*$. 
		
		Recall
		that the twist morphism $\Phi$ is of the form $\Phi=\Phi_{e_n}\circ\hdots\circ
	\Phi_{e_1}$, where $\Phi_{e_i}:\Gamma'_{v_i}\to \Gamma'_{v_{i+1}}$ is an isomorphism of
GKM graphs with respect to the linear isomorphisms $\Psi_{e_i}:\mft^*\to \mft^*$.

Denote by $w_{11}$ and $w_{12}$ the two vertices of the vertical edge $e$ in $\Gamma =
\Gamma'_{v_1}$; applying the $\Phi_{e_k}$ we obtain vertices $w_{i j}=\Phi_{e_{i-1}}\circ
\cdots \circ \Phi_{e_1}(w_{1j})$, for $j=1,2$. The vertices $w_{i1}$ and $w_{i2}$ are
connected by the edge $\Phi_{e_{i-1}}\circ \cdots \circ \Phi_{e_1}(e)$, with label
$\gamma_i:=\Psi_{e_{i-1}}\circ \cdots \circ \Psi_{e_1}(\alpha(e))$. We have to show that
for any cohomology class $x\in H^2_T(\Gamma')$ the pair consisting of the evaluations
$(x(w_{11}),x(w_{12}))$ differs from the pair $(\Psi_{e_1}^{-1}\circ \cdots \circ
\Psi_{e_n}^{-1}(x(w_{n1})),\Psi_{e_1}^{-1}\circ \cdots \circ \Psi_{e_n}^{-1}(x(w_{n2})))$
only by an element of the form $(\beta,\beta)$, with $\beta\in
H^2(BT;\ZZ)^*$, implying that the images of
$(\iota\circ i_e)^*(x)$ and $(\iota\circ \Phi\circ i_e)^*(x)$ agree in $H^2(S_e)$. 

We will show this statement for the pair $(\Psi_{e_i}(x(w_{i1})),\Psi_{e_i}(x(w_{i2})))$ and $(x(w_{i+1\, 1}),x(w_{i+1\, 2}))$, which inductively proves the claim above.

From Lemma \ref{lem: fiberwise signed} we obtain a fiberwise signed structure, such that all vertical weights have fixed signs
compatible with the $\Phi_{e_i}$. For horizontal weights we fix an arbitrary sign. We
then have the following relations:
\begin{align*}
x(w_{i+1\, 1}) - x(w_{i 1}) &= k \alpha(e_i)\\
x(w_{i+1\, 2}) - x(w_{i 2}) &= l \alpha(e_i)\\
x(w_{i 2}) - x(w_{i 1}) &= m \gamma_i\\
x(w_{i+1\, 2}) - x(w_{i+1\, 1}) &= n \gamma_{i+1}
\end{align*}
for some $k,l,m,n\in \ZZ$. We have to compare
\begin{align*}
x(w_{i+1\, 1}) - \Psi_{e_i}(x(w_{i1})) &= x(w_{i1}) - \Psi_{e_1}(x(w_{i1})) + k\alpha(e_i)
\end{align*}
with
\begin{align*}
x(w_{i+1\, 2}) - \Psi_{e_i}(x(w_{i 2})) &= x(w_{i2}) + l\alpha(e_i) - \Psi_{e_i}(x(w_{i1}) + m\gamma_i)\\
&= x(w_{i1}) - \Psi_{e_i}(x(w_{i1})) + l\alpha(e_i) + m\gamma_i - m\Psi_{e_i}(\gamma_i).
\end{align*}
Due to the fiberwise signed condition we have positive signs in the equation $\Psi_{e_1}(\gamma_i) = \gamma_i + u \alpha(e_i)$  for some $u\in \ZZ$ whence we have to show that
\begin{align*}
k = l-mu.
\end{align*}
But combining the equations above, we have
\begin{align*}
n(\gamma_i + u\alpha(e_i)) &= n\Psi_{e_i}(\gamma_i) = n\gamma_{i+1} = x(w_{i+1\, 2}) - x(w_{i+1\, 1}) \\
&= x(w_{i2})+ l\alpha(e_i) - x(w_{i1}) - k\alpha(e_i)\\
&= m\gamma_i + (l-k)\alpha(e_i),
\end{align*}
hence by the GKM condition we have $n=m$ and $l-k = nu = mu$.
	\end{proof}
	
	\section{Examples of GKM fiber bundles}\label{sec: examples}
	\subsection{Admissible tuples}
Finally, we would like to give a large class of examples of $T$-GKM fiber bundles $\Gamma\to \Gamma'\to B$, where $\Gamma$ is the GKM graph of $G/T$ and $B$ a $2$-regular GKM graph such that the common kernel $T'$ of the weights of $B$ is connected, whose twist automorphism is of \textbf{Type 1}, see Example \ref{Type1}. These GKM fiber bundles will be realizable by Theorem \ref{thm:realization}. We will try to be as general as possible at first, in order to make the class of examples as large as possible, but will certainly have to make some restrictions later on.

Choose a compact simply-connected Lie group $G$, let $T\subset G$ be a maximal torus, and
$w\in W(G)$ such that $c_w$ fixes a subtorus $T'\subset T$ of codimension $2$.
Let $B$ be any two-valent
GKM graph whose labels are elements of the annihilator
${\mathrm{ann}}(\mft')\subset \mft^*$. We assume that the common kernel of the labels in
${\mathrm{ann}}(\mft')\subset \mft^*$ is equal to the connected subgroup $T'$ --
among others, this allows us to apply Theorem \ref{thm:realization}.

Having fixed $w$, $T'$ and $B$ we ask how to construct signed GKM fiber
bundles $\Gamma \to \Gamma'\to B$ with twist automorphism of Type 1
defined by $w$, i.e., $\beta\mapsto w\cdot \beta = \beta\circ \Ad_w^{-1}$, and subject
to the condition that the labels of the fiber graph over some base vertex $v_1$ are
the standard labels of the GKM graph of $G/T$. Starting there, we
label the $n$ vertices of $B$ by $v_1,v_2,\hdots,v_n,v_{n+1}=v_1$, denote by $\alpha_i$
the signed weight of the edge $(v_i,v_{i+1})$, and by $T_i$ the subgroup given by the
kernel of $\alpha_i$.\\

To construct the total space of $\Gamma \to \Gamma'\to B$, we consider $\Gamma' :=
(\gamma\times \Gamma)/\sim$, where $\gamma$ is an abstract path graph with $n+1$ vertices
$v_i$ and fibers over $v_1$ and $v_{n+1}$ are identified via the twist automorphism
$L_{[w]}$ of
Type 1, see Example \ref{Type1}. So we only need to care about the
labeling and the connection. We equip the horizontal edges with the
labels corresponding to a simple closed path around $B$. It remains to specify what the
labeling on all vertical edges is.\\

Assume for the moment that we have already fixed the labeling on the fiber $\Gamma_{v_1}'$ over $v_1$, endowed with
its canonical connection. Now we ask what the edge weights of $\Gamma_{v_2}'$ might look
like. From the definition of a GKM fiber bundle, see Section
\ref{sec:GKMfiberbundles}, we have that any potential set of edge labels of
$\Gamma_{v_2}'$ arises from $\Gamma_{v_1}'$ by identifying the graphs via horizontal
transport and applying to the labels an element in $\mathcal{A}_1$, the group of
automorphisms of $T$ that fix $T_1=\ker(\alpha_1)$. Conversely, any such automorphism
defines in this way a set of edge labels on $\Gamma_{v_2}'$.\\

We can go on like this, defining in particular the groups $\mathcal{A}_{i}$, until we
reach $\Gamma_{v_{n}}'$. We have now chosen elements
\[
\psi_1\in \mathcal{A}_1, \hdots, \psi_{n-1}\in \mathcal{A}_{n-1}.
\]
In principle, this defines the labels on the entire graph $\Gamma$. In
order for the result to define a GKM fiber bundle, the labels of $\Gamma_{v_n}'$ need to
be compatible with $\Gamma_{v_1}'$ in the same way, i.e., $\psi_n\circ \hdots \circ \psi_1=\Ad_w$. This leads us to the following
\begin{definition}
	We call a tuple $(\psi_1,\hdots,\psi_n)$ \textit{admissible} if $\psi_n\circ \hdots \circ \psi_1=\Ad_w$, and define $\mathcal{A}_{adm} \subset \mathcal{A}_1\times \hdots \times \mathcal{A}_n$ to be the set of admissible tuples.
\end{definition}
In order to construct examples of GKM fiber bundles, we therefore have to
\begin{itemize}
	\item construct admissible tuples.
	\item Give a criterion for when the labeling obtained by such tuples actually
	satisfies the GKM condition of two adjacent weights being linearly
	independent. Note that this condition has not yet been considered in the
	constructions of the labels and that in theory fiber weights might become colinear to
	adjacent basic weights in the construction process.
	\item find a compatible connection with respect to the now obtained labeling.
\end{itemize}
Let us first treat the last point. On $\gamma\times \Gamma$ we consider the connection of
product type, restricting to the canonical connection on each fiber. This induces a
well-defined connection on the quotient $\Gamma'$, since the canonical connection is
preserved by any automorphism of Type 1, see Lemma \ref{lem: type 1 and 2 preserve can
connection}. The resulting connections is compatible with the labels as transport along a
horizontal edge of label $\beta$ shifts vertical labels by an automorphism which restricts
to the identity on $\Z^n/\langle \beta\rangle $. \\

\begin{remark}\label{rem:admissible}
	Note that the labeling of $\Gamma_{v_1}'$ does not enter in the definition of
	'admissibility'. Note further that the GKM condition (ii) in Definition
	\ref{defn:abstractgkmgraph} is satisfied at each edge. Thus, given an admissible tuple,
	we may hope to choose the initial labeling $\Gamma_{v_1}'$ in such a way that the induced
	labels at each vertex of $\Gamma'$ are pairwise linearly independent. This is achieved by the following lemma.
\end{remark}
	\begin{lemma}
		Fix $\Gamma_{v_1}'$, $B$, $\Phi$, and an admissible tuple $(\psi_1,\hdots,\psi_n)$ as above.
		\begin{enumerate}[label=$(\roman*)$]
			\item 
		 If none of the labels on $\Gamma_{v_1}'$ are contained in the real plane spanned by the base weights, then the resulting labeled graph $\Gamma'$ is a GKM graph.
		 \item One can always choose labels on $\Gamma_{v_1}'$ subject to the condition in
			 $(i)$ while fixing the other choices in the construction.
		 \end{enumerate}
	\end{lemma}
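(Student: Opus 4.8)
The plan is to verify, for the labeled graph $\Gamma'$ produced by the construction, the axioms of an abstract GKM graph (Definition \ref{defn:abstractgkmgraph}), and then to show the initial labeling can be chosen so that the one nontrivial axiom holds.

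\textbf{Part $(i)$.} The construction already supplies $\Gamma'$ with its underlying graph, its labeling (each horizontal edge gets the corresponding weight of $B$; for each $i$ the vertical edges of $\Gamma'_{v_i}$ get the image of the labels of $\Gamma'_{v_1}$ under the composite lattice automorphism $\psi_{i-1}\circ\cdots\circ\psi_1$), and a product-type connection, which is compatible with the labeling — this uses the canonical connection on the fibers (Remark \ref{rem: canonical connection is compatible}), the lift of a compatible connection on $B$ for the horizontal edges, and the fact that each $\psi_k$ restricts to the identity modulo $\alpha_k$ to control the mixed congruences, as already observed before the lemma. Hence axioms $(ii)$ and $(iii)$ hold and only axiom $(i)$ — pairwise linear independence of the labels at each vertex — remains. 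Write $V\subset\mathfrak{t}^*$ for the plane spanned by the weights of $B$; as $B$ is a GKM graph, its two weights at any vertex span $V$. At a vertex of $\Gamma'$ over $v_i$, the incident edges are two horizontal edges, labeled by the two weights of $B$ at $v_i$ (independent, since $B$ is GKM), and the vertical edges of $\Gamma'_{v_i}$. Two vertical labels are independent because this holds on $\Gamma'_{v_1}$ — it is the GKM graph of $G/T$ — and the labels over $v_i$ arise by applying the lattice automorphism $\psi_{i-1}\circ\cdots\circ\psi_1$. For a vertical label $\beta$ against a horizontal label $\alpha\in V$ I would argue $\beta\notin V$: since each $\psi_k$ fixes $T_k=\ker\alpha_k$ it acts on $\mathfrak{t}^*$ as the identity modulo $\Span(\alpha_k)\subset V$, so every vertical label of $\Gamma'_{v_i}$ lies in the same coset modulo $V$ as the corresponding label of $\Gamma'_{v_1}$, which is not in $V$ by the hypothesis of $(i)$; hence $\beta\notin V$ and $\beta$ is not proportional to $\alpha$. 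I expect all of $(i)$ to be routine.

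\textbf{Part $(ii)$.} The freedom here is that the initial labeling of $\Gamma'_{v_1}$ need only realize the \emph{abstract} GKM graph of $G/T$, so it may be replaced by its composition with a lattice automorphism $\Xi$ of $\ZZ^m$ — and, on the graph side, precomposed with an automorphism of $\Gamma$ — subject only to compatibility with the fixed twist $\Phi$ and admissible tuple, i.e.\ that $\Phi$ stay an automorphism of the relabeled fiber with its prescribed linear part $\Psi$; by Theorem \ref{thm:type decomposition} this amounts to requiring that $\Xi\Psi\Xi^{-1}$ again be the linear part of some automorphism in $\Aut_\nabla(\Gamma)$. Under such a relabeling the vertical labels of $\Gamma'_{v_1}$ become $\Xi(\Delta_G/\pm 1)$, so the hypothesis of $(i)$ becomes: the $2$-plane $\Xi^{-1}(V)$ contains no root of $G$. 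I would now assume $\rk(G)\ge 3$ (for $\rk(G)=2$ one has $V=\mathfrak{t}^*$ and the condition can never be met), note that $2$-planes meeting none of the finitely many roots form a dense subset among the rational $2$-planes in $\mathrm{Gr}(2,\mathfrak{t}^*)$, and conclude by exhibiting an admissible $\Xi$ with $\Xi^{-1}(V)$ in that set — for instance, by classifying via Theorem \ref{thm:type decomposition} which $\Xi$ are admissible and choosing one in a coset of the centralizer of $\Psi$ whose associated plane is root-free.

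The hard part will be this last step: showing that the relabelings compatible with the \emph{prescribed} twist are abundant enough — in particular, not all forced to preserve $V$ — that one of them moves $V$ off every root hyperplane. Pinning down exactly how the compatibility condition $\Xi\Psi\Xi^{-1}\in\Aut_\nabla(\Gamma)$ constrains $\Xi$, and verifying that there is still room to realize a generic $2$-plane, is where the real content of $(ii)$ lies; once that is in hand the rest is bookkeeping.
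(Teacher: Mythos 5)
Your part $(i)$ matches the paper's argument almost verbatim: both reduce the claim to the pairwise linear independence of labels at each vertex, dispose of the horizontal--horizontal and vertical--vertical pairs immediately, and handle the horizontal--vertical pairs by observing that every $\psi_k$ acts as the identity modulo the base weight $\alpha_k$, so that vertical labels over $v_i$ lie in the same coset modulo $V$ as the initial vertical labels (the paper phrases this inductively, you phrase it as preservation of the coset; these are the same observation). So far so good.

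For part $(ii)$ your route diverges substantially from the paper's, and is not finished. The paper's proof is short and direct: it fixes a rational $2$-plane $E\subset\Q^m$ containing none of the standard fiber weights $\gamma_i$ (implicitly this needs $m=\rk(G)\geq 3$, a useful point you make explicit), picks \emph{any} lattice automorphism $\psi$ of $\ZZ^m$ with $\psi(E)=\mathrm{ann}(\mft')$, and declares $\psi(\gamma_1),\dots,\psi(\gamma_k)$ to be the new fiber labels; then $\psi(\gamma_i)\notin\mathrm{ann}(\mft')$ by bijectivity of $\psi$. No density argument, no normal-form reasoning with $\Aut_\nabla(\Gamma)$, and no constraint beyond $\psi(E)=\mathrm{ann}(\mft')$. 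By contrast you insist on tracking the compatibility of the relabeling $\Xi$ with the twist, propose the condition $\Xi\Psi\Xi^{-1}\in\Aut_\nabla(\Gamma)$, and then concede that verifying there is a $\Xi$ satisfying both this constraint and the ``root-free plane'' condition is exactly the hard and unresolved step. This is a genuine gap in your write-up: part $(ii)$ as you have stated it proves nothing without that last step, and the step is left as a plan rather than an argument. You also slightly misstate your own constraint: if the linear part of the twist is ``prescribed'' to be $\Psi$, the relabeling must satisfy $\Xi\Psi\Xi^{-1}=\Psi$, not merely $\Xi\Psi\Xi^{-1}\in\Aut_\nabla(\Gamma)$.

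That said, the concern you raise is not idle. If one regards the admissible tuple $(\psi_1,\dots,\psi_n)$ and the identifying automorphism $L_{[w]}$ as genuinely fixed while only the initial labeling of $\Gamma'_{v_1}$ varies, then the product connection on $\Gamma'$ is compatible along the last horizontal edge $e_n$ precisely when the relabeling commutes with $w\cdot(-)$ up to $\alpha_n$, and the paper's $\psi$ is not chosen to arrange this. You have put your finger on a point the paper glosses over. But whether or not that constraint must be imposed, your proposal does not close the argument: the density heuristic plus ``classifying admissible $\Xi$ via Theorem \ref{thm:type decomposition}'' is sketched, not proved, and as you observe there are cases (e.g.\ $w$ a Coxeter element of an irreducible factor, or more dramatically $\rk(G)=2$) where the freedom in $\Xi$ may not suffice. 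To be on par with the paper you would need either to carry out this program in detail or to adopt the paper's simpler tactic and argue that the extra compatibility can be absorbed into the remaining freedom in the construction.
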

	\begin{proof}
		We start with the proof of statement $(ii)$. Let $\gamma_1,\hdots,\gamma_k$ be the set
		of all of the weights of $\Gamma\cong \Gamma_{v_1}'$ with its standard labeling, see
		Section \ref{sec:homspaces}. We identify the weight lattice with $\ZZ^m$.
		Choose a rational plane in $\Q^m$ not containing the $\gamma_i$ and
		we define $E\subset \ZZ^m$ as its integral points. There is an automorphism $\psi$ of
	$\ZZ^m$ that sends $E$ to $\mathrm{ann}(\mft')\subset \mft^*$, where the codimension two
subtorus $T'$ is fixed by $\Ad_w$. Clearly none of the $\psi(\gamma_i)$
is contained in $\mathrm{ann}(\mft')$. The labels on $\Gamma'_{v_1}\cong \Gamma$ given by
the $\psi(\gamma_i)$ do the job. 
		
		In order to show that in this case $\Gamma$ is a GKM graph, we only need to show that
		the labels at any vertex of $\Gamma'$ are pairwise linearly independent. This can be
		seen as follows: at each vertex, it is already clear that no pair of
		horizontal and no pair of vertical weights are colinear. Also, for a
		vertex above $v_1$, no base weight is colinear to a vertical weight, because by
		assumption none of the vertical weights is contained in the real span of the base
		weights.
		
		We observe that the last condition holds for each vertex over $v_2$, because it holds
		over $v_1$ and vertical weights over $v_2$ are the same as vertical weights over $v_1$
		modulo base weights (due to GKM condition $(ii)$). Inductively, it holds over all
		vertices of $B$. This finishes the proof.
	\end{proof}
\subsection{The construction of admissible tuples}
To construct admissible tuples, it is more convenient to identify automorphisms of
$T$ with their respective automorphisms of the Lie algebra, that is, with elements of
$\GL(m,\ZZ)$. In that way, each $\psi_i\in \mathcal{A}_i$ is now an element in
$\GL(m,\ZZ)$ which fixes $\mft_i$ (the Lie algebra of $T_i$) and in particular $\mft'\subset
\mft$.
We choose a basis $w_1,\hdots,w_{m-2}$ for the canonical lattice of the latter, given by the kernel of the exponential map, and extend this to a basis $w_1,\hdots,w_m$ for the lattice of $\mft$ such that
$w_{m-1}$ is in $\mft_{n}$ and $w_m$ is in $\mft_{n-1}$. 
To see that this is possible, consider the exact sequence of tori $T'\to
T\to T^2$ defined by the homomorphism $(\alpha_{n-1},\alpha_n):\mft\to \ZZ^2$ which splits
by \cite[p. 57, Exercise 7]{MR336651} (here it enters that we assumed the common kernel of
$\alpha_{n-1}$ and $\alpha_n$ to be equal to the connected group $T'$). Then we define $w_{m-1}$ and $w_m$ to be elements in $\mft$ that are sent to $(0,1)$ and $(1,0)$ respectively.\\
With respect to this basis, any $\psi_i\in \mathcal{A}_i$ is of the form
\[
\begin{pmatrix}
	1_{\ZZ^{m-2}} & B_i \\
	0 & A_i
\end{pmatrix}
\]
where $B_i$ is some $(m-2)\times 2$-matrix and $A_i$ is in ${\GL}(2,\ZZ)$, such that $A_i$
and $B_i$ are $T_i$-\textbf{compatible} in the sense that the combined
matrix fixes $\mft_i$.
\begin{rem} \label{rem: Ti-compatible} An equivalent formulation is the following: the lattice of $T_i$ has a basis consisting of $w_1,\ldots,w_{m-2}$ and $v=a w_{m-1}+ b w_m$ for $a,b\in \ZZ$. Then for $v= (a,b)^T$ the condition of $T_i$-compatibility becomes $A_iv=v$ and $B_iv=0$.
\end{rem}
Note that by the choice of $w_{m-1},w_m$, the matrices $A_{n-1}$ and $A_n$ need to be of the form
\[
A_{n-1}=\begin{pmatrix}
	{\pm 1} & 0 \\
	* & 1
\end{pmatrix}
,\quad
A_{n}=\begin{pmatrix}
	1 & * \\
	0 & {\pm 1}
\end{pmatrix}
\]
Of course, $\Ad_w$ has the same form as the $\psi_i$, and is thus determined by matrices $B_w$ and $A_w$.
\begin{theorem}\label{thm:fiberbundles}
	For the basis $(w_1,\hdots,w_m)$, any choice of the above $A_1,\hdots,A_{n}$ such that $A_{n}\cdot \hdots \cdot A_1=A_w$, and for any choice of compatible $B_1,\hdots,B_{n-2}$, there are unique compatible $B_{n-1}$ and $B_n$
	such that $\psi_n\cdot \hdots \cdot \psi_1=\Ad_w$, where $\psi_{i}$ is the automorphism defined by the matrix
	\[
		\begin{pmatrix}
			1_{\mathbb{Z}^{m-2}} & B_{i}\\
			0 & A_{i}\\
		\end{pmatrix}. 
	\]
\end{theorem}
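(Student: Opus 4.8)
The plan is to compute the matrix product $\psi_n\circ\cdots\circ\psi_1$ in block form, to observe that the identity $\psi_n\circ\cdots\circ\psi_1=\Ad_w$ then splits into the bottom-right $2\times2$ block equation --- which holds by hypothesis --- together with a single affine equation in the two unknown matrices $B_{n-1}$ and $B_n$, and finally to solve that equation using the prescribed shapes of $A_{n-1},A_n$ and the $T_{n-1}$- resp.\ $T_n$-compatibility constraints on $B_{n-1},B_n$.

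First I would record the block multiplication rule. Since $\psi_i=\begin{pmatrix}1_{\ZZ^{m-2}}&B_i\\0&A_i\end{pmatrix}$ acts on a column $\begin{pmatrix}x\\y\end{pmatrix}$ by $x\mapsto x+B_iy$, $y\mapsto A_iy$, an induction on the number of factors gives
\[
\psi_n\circ\cdots\circ\psi_1 \;=\; \begin{pmatrix}1_{\ZZ^{m-2}} & \sum_{j=1}^{n}B_j\,A_{j-1}A_{j-2}\cdots A_1\\ 0 & A_n\cdots A_1\end{pmatrix},
\]
where the product $A_{j-1}\cdots A_1$ is understood as the identity when $j=1$. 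As $\Ad_w$ fixes $\mathfrak{t}'$, whose lattice is spanned by $w_1,\dots,w_{m-2}$, it has the same block shape $\Ad_w=\begin{pmatrix}1_{\ZZ^{m-2}}&B_w\\0&A_w\end{pmatrix}$, and since $A_n\cdots A_1=A_w$ is assumed, the equation $\psi_n\circ\cdots\circ\psi_1=\Ad_w$ is equivalent to the single $(m-2)\times2$ identity
\[
\sum_{j=1}^{n}B_j\,(A_{j-1}\cdots A_1)\;=\;B_w .
\]

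Next I would isolate the unknowns. Set $P:=A_{n-2}\cdots A_1\in\GL(2,\ZZ)$, so that $A_{n-1}\cdots A_1=A_{n-1}P$, and put $C:=B_w-\sum_{j=1}^{n-2}B_j\,(A_{j-1}\cdots A_1)$, which is determined by the data. The identity above becomes $B_{n-1}P+B_nA_{n-1}P=C$, or equivalently
\[
B_{n-1}+B_nA_{n-1}\;=\;CP^{-1}\;=:\;D ,
\]
an integral $(m-2)\times2$ matrix. Now I would feed in the shapes given by Remark \ref{rem: Ti-compatible}. Since $w_{m-1}\in\mathfrak{t}_n$, $T_n$-compatibility corresponds to the vector $v=(1,0)^T$, so the first column of $B_n$ vanishes, say $B_n=[\,0\mid q\,]$; since $w_m\in\mathfrak{t}_{n-1}$, $T_{n-1}$-compatibility corresponds to $v=(0,1)^T$, so the second column of $B_{n-1}$ vanishes, say $B_{n-1}=[\,p\mid 0\,]$. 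With $A_{n-1}=\begin{pmatrix}\pm1&0\\ c&1\end{pmatrix}$ one computes $B_nA_{n-1}=[\,cq\mid q\,]$, so, writing $D=[\,d_1\mid d_2\,]$, the equation becomes $[\,p+cq\mid q\,]=[\,d_1\mid d_2\,]$. This triangular system has the unique solution $q=d_2$, $p=d_1-c\,d_2$; both are integral, and the resulting $B_{n-1},B_n$ carry exactly the vanishing columns required, so together with the (compatible) $A_{n-1},A_n$ they define $\psi_{n-1}\in\mathcal{A}_{n-1}$ and $\psi_n\in\mathcal{A}_n$. Conversely any admissible completion has these same vanishing columns, hence solves the same triangular system, so $(B_{n-1},B_n)$ is unique.

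The only delicate point is bookkeeping: getting the block-product formula and the indexing of the partial products $A_{j-1}\cdots A_1$ right, and correctly translating $w_{m-1}\in\mathfrak{t}_n$, $w_m\in\mathfrak{t}_{n-1}$ into the vanishing of the appropriate columns of $B_n$, $B_{n-1}$ via Remark \ref{rem: Ti-compatible}. Once this is set up the remaining step is a triangular linear system, so existence and uniqueness are immediate, and I do not expect a substantive obstacle.
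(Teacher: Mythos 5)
Your argument is correct and is essentially the paper's proof: the paper reduces to the same single equation $B_{n-1}+B_nA_{n-1}=D$ by instead computing $\psi := \Ad_w\cdot\psi_1^{-1}\cdots\psi_{n-2}^{-1}$ directly and decomposing it as $\psi_n\psi_{n-1}$, rather than expanding the full product as you do, but both organizations land on the identical triangular $2$-column linear system with the unique solution $q=d_2$, $p=d_1-cd_2$. Your translation of $T_{n-1}$- and $T_n$-compatibility into the vanishing columns of $B_{n-1}$ and $B_n$ via Remark \ref{rem: Ti-compatible} matches the paper's ansatz exactly.
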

\begin{proof}
	We set $\psi=\Ad_w\cdot \psi_1^{-1}\cdot \hdots \cdot \psi_{n-2}^{-1}$. This is of the form
	\[
	\psi=
	\begin{pmatrix}
		1_{\ZZ^{m-2}} & B \\
		0 & A_n\cdot A_{n-1}
	\end{pmatrix}.
	\]
	Here, $B$ is of the form $(u,v)$, where $u,v \in \mathbb{Z}^{m-2}$. We wish to decompose this matrix $\psi$ as
	$\psi_n\cdot\psi_{n-1}$. Recall that $A_{n-1}$ and $A_n$ are given by
	\[
	A_{n-1}=
	\begin{pmatrix}
		\pm1 & 0 \\
		k_{n-1} & 1
	\end{pmatrix}, \quad
	A_{n}=
	\begin{pmatrix}
		1 & k_n \\
		0 & \pm 1
	\end{pmatrix}.
	\]
	for certain integers $k_{n-1}$ and $k_n$.
	Moreover, due to compatibility, the right entries of $B_{n-1}$ as well as the left
	entries of $B_n$ have to be $0$, so we make the ansatz
	\[
	\psi_{n}=
	\begin{pmatrix}
		1_{\ZZ^{m-2}} & (0,v') \\
		0 & A_{n}
	\end{pmatrix},\quad
	\psi_{n-1}=
	\begin{pmatrix}
		1_{\ZZ^{m-2}} & (u',0) \\
		0 & A_{n-1}
	\end{pmatrix}
	\]
	which satisfy the compatibility conditions since by the initial assumptions we have $\mft_{n-1}=\langle w_1,\ldots,w_{m-2},w_m\rangle$ and $\mft_n=\langle w_1,\ldots,w_{m-1}\rangle$.
	Their product $\psi_n\cdot \psi_{n-1}$ is
	\[
	\psi_{n}\cdot \psi_{n-1}=
	\begin{pmatrix}
		1_{\ZZ^{m-2}} & (u'+k_{n-1}v', v') \\
		0 & A_{n}\cdot A_{n-1}
	\end{pmatrix}
	\]		
	Thus, $u'=u-k_{n-1}v$ and $v'=v$ is the unique solution. This finishes the proof.
\end{proof}
\begin{rem}\label{rem:admissibletuples}
	Let $A$ be the lower right $2\times 2$ part of $\mathrm{Ad}_w$. Then by \Cref{thm:fiberbundles} and Remark \ref{rem: Ti-compatible} any factorization $A=A_n\ldots A_1$ with $A_i$ being $T_i$-compatible can be completed to an admissible tuple. In fact completions correspond bijectively to choices of $B_1,\ldots,B_{m-2}$ subject to the condition that each of them annihilate a certain element of of $\Z^2$ defined by the respective $T_i$.
\end{rem}

It seems hard to classify all such solutions, but nonetheless we can find many for certain quasitoric bases, including all toric ones that are not $\C \PP^2$, and lots of Lie groups $G$.
		\begin{example}\label{ex:fiberbundles}
			Suppose that there is a subalgebra $\su(3)\subset \mfg$ for which 
			\begin{itemize}
				\item $T$ decomposes as a product of a codimension 2 subtorus $T'\subset T$ and
					the $2$-torus coming from $\su(3)$ such that the adjoint representation of $T'$
					fixes $\su(3)$. 
				\item there exists a Weyl group element $w\in W(G)$ that fixes $T'$ and acts on the maximal abelian subalgebra of $\su(3)$ as $e_1\mapsto -e_3$, $e_2\mapsto e_1$, where $e_1=2\pi i\diag(1,-1,0)$,
				$e_2=2\pi i\diag(0,1,-1)$ and $e_3=2\pi i\diag(1,0,-1)$ (in the usual identification of $W(\su(3))$ with $S_3$, this is the element $(23) \circ (12)$).
			\end{itemize}
One already obtains interesting examples by choosing $\mfg=\su(3)$ and $T'=\{e\}$.

			For some $b\in \Z$, we choose the basis $w_{m-1}=e_1-e_2$ and $w_m=-(b-1) e_1+be_2$ for the canonical lattice of the maximal torus in $\su(3)$ considered to be in $\mathfrak{g}$,
			and a basis $w_1,\hdots,w_{m-2}$ for the canonical lattice of $\mft'$. By the first
		of the two above assumptions, $w_1,\hdots,w_m$ is a basis of $\mathfrak{t}$.
			The matrix of the isomorphism $\Ad_w$
			with respect to the basis $w_{m-1}$ and $w_m$ is now
			\[
			\Ad_w=
			\begin{pmatrix}
				b & b-1 \\
				1 & 1
			\end{pmatrix}
			\begin{pmatrix}
				-1 & 1 \\
				-1 & 0
			\end{pmatrix}
			\begin{pmatrix}
				1 & -b+1 \\
				-1 & b
			\end{pmatrix}
			=\begin{pmatrix}
				-3b+1 & 3b^{2}-3b+1 \\
				-3 & 3b-2
			\end{pmatrix}.
			\]

			Let $\overline{B}$ be a graph of a $4$-dimensional effective quasitoric
			$T^2$-manifold where there are at least two edges labeled with the same
			weight\footnote{This is the case for all toric manifolds except $\C \PP^2$, since
			these can be obtained from repeated blow ups of a Hirzebruch surface, see
		\cite[Theorem 10.4.3]{MR2810322}}; without loss of generality, we may assume that these are the edges $(v_l,v_{l+1})$ and $(v_n,v_1)$ for some $2\leq l\leq n-2$.
			 We pull back the weights on $\overline{B}$ along a homomorphism $T\rightarrow
			 T/T'\cong T^2$, giving us the graph $B$ labeled with linear forms
			 $\alpha_1,\hdots,\alpha_n$ in $\mathfrak{t}^*$, such that $w_{m-1}$ is in
			 $\mft_n=\ker(\alpha_n)$ and $w_m$ is in $\mft_{n-1}=\ker(\alpha_{n-1})$. Then by
			 assumption, $\ker(\alpha_l)=\mft_l$ equals
			 $\mft_n$, and both are spanned by $w_1,\hdots, w_{m-1}$ while
			 $\mathfrak t_{n-1}$ is spanned by $w_1,\hdots, w_{m-2},w_m$.\\
			We set $A_k$ to be the identity for $k\neq n-1,n,l$, and need to choose $A_{l}$,
			$A_{n-1}$ and $A_n$ such that $\Ad_w=A_n\cdot A_{n-1}\cdot A_{l}$.
			Furthermore the $A_i$ need to satisfy the $T_i$-compatibility
			conditions which here is equivalent to $A_l,A_n$ being upper triangular with $1$ in
		the top left and $A_{n-1}$ being lower triangular with $1$ in the bottom right.\\
			We can therefore make the ansatz
		  \[
		  	A_{n}=\begin{pmatrix}
		  	1 & b \\
				0 & 1
		  	\end{pmatrix} 
		  \]
		  which leads to
			\[
				A_{n-1} \cdot A_{l} = A_{n}^{-1} \cdot \Ad_{w} = 
			\begin{pmatrix}
				1 & -b \\
				0 & 1
			\end{pmatrix}
			\begin{pmatrix}
				-3b+1 & 3b^{2}-3b+1 \\
				-3 & 3b-2
			\end{pmatrix}=
			\begin{pmatrix}
				1 & -b+1 \\
				-3 & 3b-2 			\end{pmatrix}
			\]
			Thus, we can choose
			\[A_{n-1}=\begin{pmatrix}
				1 & 0 \\
				-3 & 1
			\end{pmatrix}, \quad
			A_l=\begin{pmatrix}
				1 & -b+1 \\
				0 & 1
			\end{pmatrix}.
			\]
			By \Cref{thm:fiberbundles} and Remark \ref{rem:admissibletuples}, we obtain many admissible tuples in that setting.
		\end{example}
\bibliographystyle{plain}
\bibliography{gkm_flag_bundles12.0}
\end{document}